\newtheorem{set2}{Satz}[section]
\newtheorem{theorem}[set2]{Theorem}
\newtheorem{corollary}[set2]{Corollary}
\newtheorem{definition}[set2]{Definition}
\newtheorem{lemma}[set2]{Lemma}
\newtheorem{notation[set2]}{Notation}
\newtheorem{proposition}[set2]{Proposition}
\newtheorem{remark}[set2]{Remark}
\newcommand{\ep}{\hfill{$\square$}}
\newenvironment{proof}[1][Proof]{\textit{#1.} }{\
\\}
\def\XXint#1#2#3{{\setbox0=\hbox{$#1{#2#3}{\int}$}
\vcenter{\hbox{$#2#3$}}\kern-.5\wd0}}
\newcommand{\R}{\mathbb{R}}
\newcommand{\N}{\mathbb{N}}
\newcommand{\C}{\mathcal}
\newcommand{\ol}{\overline}
\newcommand{\dx}{\,\mathrm dx}
\newcommand{\dy}{\,\mathrm dy}
\newcommand{\dt}{\,\mathrm dt}
\newcommand{\ds}{\,\mathrm ds}
\newcommand{\dr}{\,\mathrm dr}
\newcommand{\dxt}{\,\mathrm dx\,\mathrm dt}
\newcommand{\dxs}{\,\mathrm dx\,\mathrm ds}
\newcommand{\weaklim}{\rightharpoonup}
\newcommand{\interior}{\mathrm{int}}
\newcommand{\essinf}{\mathrm{ess\,inf}}
\newcommand{\compact}{\subset\!\subset}
\newcommand{\var}{\mathrm{var}}
\newcommand{\essvar}{\mathrm{ess\,var}}
\begin{document}
\begin{center}
	\Large
	Complete damage in linear elastic materials 
	\\ -- \\Modeling, weak formulation and existence results
\end{center}
\begin{center}
	Christian Heinemann\footnote{Weierstrass Institute for Applied Analysis and Stochastics (WIAS), Mohrenstr. 39, 10117 Berlin,\\
		This project is supported by the DFG Research Center ``Mathematics for Key Technologies''  Matheon in Berlin.},
	Christiane Kraus$^1$
\end{center}
\vspace{2mm}
\noindent

\begin{abstract}
The analysis of material models which allow for complete damage is
of major interest in material sciences and has received an increasing attraction in the recent years.
In this work, we study a degenerating evolution inclusion describing
complete damage processes coupled with a quasi-static force balance equation and mixed boundary conditions.
For a realistic description, the inclusion is considered on a time-dependent domain and degenerates when the material undergoes maximal damage.
We propose a weak formulation where the differential inclusion is translated into a variational inequality in combination with a total energy inequality.
The damage variable is proven to be in a suitable $SBV$-space and the displacement field in a local Sobolev space.
We show that the classical differential inclusion and the boundary conditions can be regained from the notion of weak solutions under
additional regularity assumptions.

The main aim is to prove global-in-time existence of weak solutions 
for the degenerating system by performing a degenerate limit.
The variational inequality in the limit is recaptured by suitable approximation techniques
whereas the energy inequality is gained via $\Gamma$-convergence techniques.
To establish a displacement field for the elastic behavior in the limit,
a rather technical representation result of nonsmooth domains by Lipschitz
domains, which keep track of the Dirichlet boundary, is proven.
\end{abstract}

{\it Key Words:}
complete damage, linear elasticity, elliptic-parabolic systems, energetic
solution, \\ weak solution, doubly nonlinear
    differential inclusions, existence results, rate-dependent systems.  \\[4mm]
{\it AMS Subject Classifications:}
		35K85,    	
    35K55,    	
    49J40,     	
    49S05,       
    74C10,   	
    35J50,         
    74A45,	
    74G25,		
    34A12   

\section{Motivation}
\label{section:motivation}
	From a microscopic point of view, damage behavior originates from breaking atomic links in the material
	whereas a macroscopic theory may specify the damage by a scalar variable related to the quantity of damage.
	According to the latter perspective, phase-field models are quite
	common to model smooth transitions between damaged and undamaged
	material states. Such phase-field models have been mainly
	investigated for incomplete damage.  
	However, for a realistic modeling of damage processes in elastic materials, 
	complete damage theories have to be considered, where the material 
	can completely disintegrate. 

	Mathematical works of complete models covering global-in-time existence are rare and are mainly focused on purely \textit{rate-independent systems}
	\cite{Mielke06, BMR09, Mielke10, Mie11} by using $\Gamma$-convergence techniques to recover energetic properties
	in the limit.
	Existence results for \textit{rate-dependent} complete damage systems in thermoviscoelastic materials are recently published
	in \cite{RR12}.
	In contrast, much mathematical efforts have been made in understanding incomplete damage processes.
	Existence and uniqueness results for damage models of viscoelastic materials are proven in
	\cite{BSS05} in the one dimensional case. Higher dimensional damage
	models and related analytically properties 
	are investigated in \cite{AT90, BS04,  Gia05, MT10, LOS10, KRZ11, BM14} and,
	there, existence, uniqueness,  regularity and approximation results are shown.
	A coupled system describing incomplete damage, linear elasticity and phase separation appeared in \cite{WIAS1520, WIAS1569}.
	All these works are based on the gradient-of-damage model proposed by
        Fr\'emond and Nedjar \cite{FN96} (see also \cite{Fre02}) which
	describes damage as a result from microscopic movements in the solid.
	The distinction between a balance law for the microscopic forces and constitutive relations of the material yields a satisfying derivation
	of an evolution law for the damage propagation from the physical point of view.
	In particular, the gradient of the damage variable enters the resulting equations
	and serves as a regularization term for the mathematical analysis.
	When the evolution of the damage is assumed to be uni-directional, i.e., the damage is irreversible,
	the microforce balance law becomes a differential inclusion. 

	Damage modeling is an active field in the engineering community since the
	1970s. For some recent works, we refer to \cite{Car86, NPO94, Mie95, MK00, MS01, Fre02, LD05, Gee07, VSL11}. A variational 
	approach to fracture and crack propagation models can be found for
	instance in  \cite{ BFM08, CFM09, CFM10, Neg10, LT11}.
	For a non-gradient approach of damage models for brittle materials we refer to
	\cite{FG06,GL09, Bab11}. There, the damage variable $z$ takes on two distinct values,
	i.e. $\{0,1\}$, in contrast to phase-field models where intermediate values $z \in [0,1]$ are also allowed. In addition,
	the mechanical properties of damage phenomena are described in \cite{FG06, GL09, Bab11} differently. They 
	choose a $z$-mixture of a linearly elastic strong and weak material
	with two different elasticity tensors.

	The reason why incomplete damage models are more feasible for mathematical investigations is that
	a coercivity assumption on the free energy prevents the material
	from a complete degeneration and dropping this assumption may lead to serious troubles.
	However, in the case of viscoelastic materials,
	the inertia terms
	circumvent this kind of problem in the sense that
	the deformation field still exists on the whole domain accompanied with a loss of spatial regularity (cf. \cite{RR12}).
	Unfortunately, this result cannot be expected in the case of quasi-static mechanical equilibrium (see for instance \cite{BMR09}).
	
	The main aim of this work is to introduce and analyze a complete damage model
	for linear elastic materials which are assumed to be in quasi-static equilibrium.
	For the analytical discussion, we start with an incomplete damage model
	which is regularized in the equation of balance of forces as in \cite{Mielke06, Mielke10, BMR09, RR12} such that already known
	existence results can be applied. The basis for a weak formulation
	of the regularized system is a notion introduced in \cite{WIAS1520}.
	On the one hand, this weak notion is suitable for incomplete damage PDE systems described on Lipschitz domains and with mixed boundary conditions
	for the deformation field.
	On the other hand, it seems well adapted for the transition to complete damage (see also \cite{RR12}).
	The advantage is that we can deal with low regularity solutions and we
	are able to use weakly semi-continuity arguments for the passage to the limit.
	In our weak framework, the evolution inclusion for the damage process which is classically described by a doubly nonlinear differential inclusion
	becomes a variational inequality combined with a total energy inequality.
	Nevertheless, we are faced with several mathematical challenges since the system highly degenerates during the passage.
		
	The major challenge is to establish a meaningful deformation field on regions where the damage is \textit{not} complete in the limit system.
	For instance, it might happen that in the limit path-connected components of the not completely damaged material are isolated from the
	Dirichlet boundary. In this particular situation, an isolated fragment is surrounded by completely damaged material with degenerated boundary values
	and its evolution is independent of the remaining material
	parts. Therefore, such material parts will be excluded from the
	considered evolution process.
	
	The crucial modeling idea in this paper consists in formulating the PDE system on a \textit{time-dependent domain}.
	The domain contains all the not completely damaged path-connected components of the material which
	are, in a certain sense, connected to the Dirichlet boundary.
	Inside this domain, the damage evolution is driven by a differential inclusion.
	The remaining area of the original domain consists of completely damaged material and of material parts which are not completely damaged and isolated
	from the Dirichlet boundary (see Figure 1).
	Two complicacies arise in this context.
	
	The first issue concerns the energy inequality. The time-dependent domain approach leads to jumps in the
	energy which have to be accounted for in the energy inequality of the notion of weak solutions as well. This issue is tackled with $\Gamma$-convergence techniques
	in order to keep track of the energy at jump points.
	
	Secondly, the time-dependent domain might have very bad smoothness properties which might lead to a failure of Korn's inequality.
	This problem is approached by proving some rather technical covering results for these sets with smooth domains where Korn's inequality can be applied.
	In this context, we introduce some special kind of local Sobolev spaces
	which seem to be the right spaces for looking at solutions in the limit system.
	
	\begin{figure}[b!]
		\begin{center}
			\includegraphics[width=6.4in]{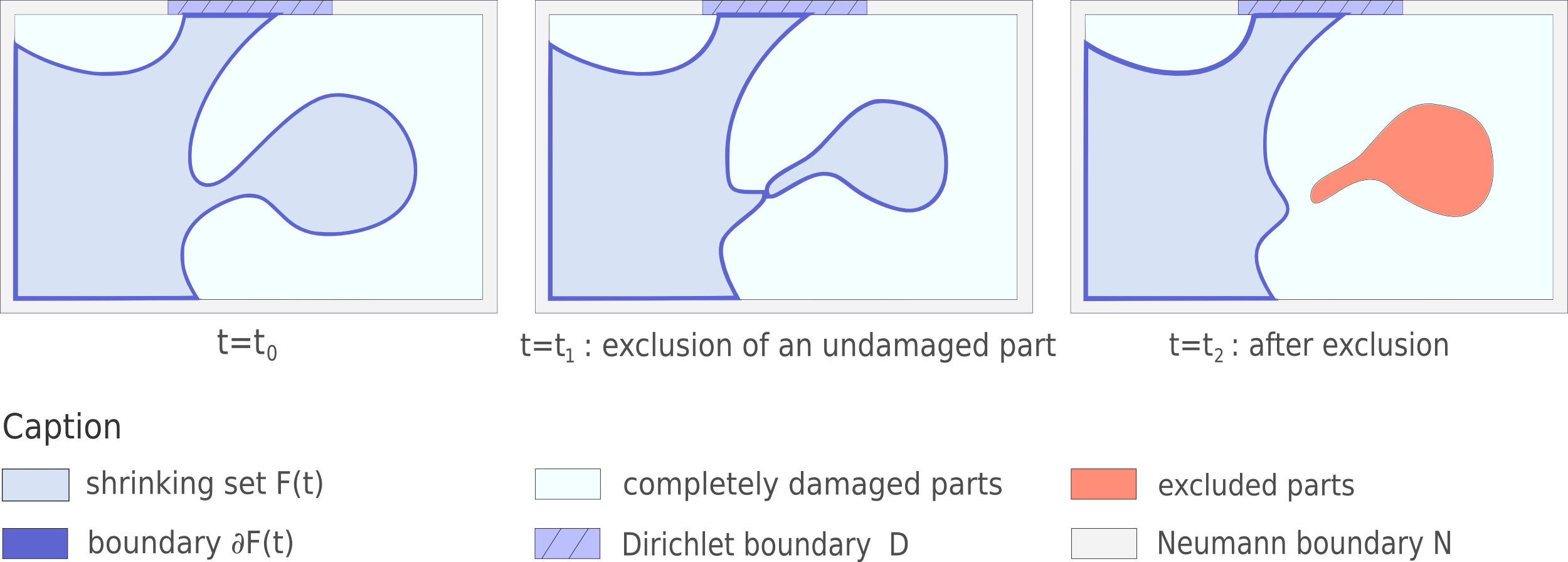} 
		\end{center}
		\caption{\textit{This illustration shows an exclusion process of an undamaged material part in 2D during the evolution.
		The dark blue curve encircles the maximal admissible subset $\mathfrak A_{D}(\{z(t)>0\})$ of the not completely damaged area
		$F(t):=\{z(t)>0\}$.
		}}
		\label{fig:figmodel}
	\end{figure}
	
	This paper is structurized as follows.
	The next section provides an overview of the notation we are going to
        use while Section \ref{section:modeling} develops our model first in a classical setting
	with enough smoothness properties and then in a rigorous mathematical setting by presenting a weak formulation
	with $SBV$-functions for the damage variable and local Sobolev functions for the deformation field.
	It is shown in Theorem \ref{theorem:weakImpliesClassical} that the weak notion reduces to the classical PDE system when enough regularity is assumed.
	The main result, i.e. Theorem \ref{theorem:mainExistenceResult}, is stated in Section \ref{section:mainResult}
	while the proof is carried out in the subsequent Section \ref{section:proof}.
	We first perform a degenerate limit procedure in Section \ref{section:simplProblem}. By Zorn's lemma, global-in-time existence of solutions of weak solutions
	will be proven in Section \ref{section:mainProb}.

	To the best of our knowledge, there are no
	global-in-time existence results in the mathematical literature for complete damage models with quasi-static mechanical forces
	and mixed boundary conditions. In addition, for our proposed model, the classical setting can
        be regained from the weak formulation if the solutions
        are smooth enough, which is novel in the existing theory of complete
        damage models.
	
\section{Notation}
\label{section:notation}
	Let $\Omega\subseteq\R^n$ denote a bounded Lipschitz domain, $D\subseteq\partial\Omega$ be a part of the boundary with $\C H^{n-1}(D)>0$.
	and $[0,T]$ with $T>0$ be the time-interval.
	The following table provides an overview of some elementary notation used in this paper.\vspace*{0.5em}\\
	\begin{tabular}{p{9em}p{32.0em}}
			$\Omega_T$, $D_T$ & \textit{$\Omega\times(0,T)$ and $D\times(0,T)$}\\
			$A:B$&\textit{Euclidean matrix product of $A\in\R^{n\times n}$ and $B\in\R^{n\times n}$}\\
			$(f)^+$ &\textit{non-negative part of $f$, i.e. $\max\{0,f\}$}\\
			$\C L^n$, $\C H^n$ & \textit{$n$-dimensional Lebesgue and Hausdorff measure}\\
			$\mathds 1_A$, $I_A$ & \textit{characteristic function and indicator function $X\rightarrow \R\cup\{\infty\}$ with respect to a subset $A\subseteq X$}\\
			$B_\varepsilon(A)$ & \textit{$\varepsilon$-neighborhood of $A\subseteq\R^n$}\\
			$\ol A,\mathrm{int}(A), \partial A$ & \textit{closure, interior and boundary of $A\subseteq\R^n$}\\
					$\{v=0\}$, $\{v>0\}$ & \textit{level and super-level set of $v$, i.e., $\big\{x\in\ol\Omega\,\big|\,v(x)=0\text{ a.e.}\big\}$
						and $\big\{x\in\ol\Omega\,\big|\,v(x)>0\text{ a.e.}\big\}$ for functions $f\in L^1(\Omega)$
						defined up to a set of measure $0$ and defined uniquely
						if $v\in W^{1,p}(\Omega)$, $p>n$, as $W^{1,p}(\Omega)\hookrightarrow \C C(\ol\Omega)$}\\
					$\C C_x^k(\ol{\Omega_T};\R^N)$ & \textit{space of $k$-times continuously differentiable functions
					with respect to the spatial variable on the set $\Omega\times[0,T]$
					where the $k$-th spatial derivatives can be continuously
					extended to $\ol{\Omega_T}$}\\
			$\partial J$& \textit{subdifferential of a convex function $J:X\rightarrow\R\cup\{\infty\}$, $X$ Banach space}\\
			$\mathrm{supp}(v)$ & \textit{support of a function $v$}\vspace{0.5em}
	\end{tabular}\\
	
	Let $(X,\|\cdot\|)$ be a Banach space, $I\subseteq\R$ be an open interval and $\mu$ be a positive measure.
	The space $L^p(I,\mu;X)$, $1\leq p\leq\infty$, denotes the $p$-Bochner $\mu$-integrable functions with values in $X$
	($\mu$-essentially bounded for $p=\infty$, respectively).
	We write $L^p(I;X)$ for $L^p(I,\C L^1;X)$.
	The subspace $H^{q}(I;X)\subseteq L^2(I;X)$, $q\in\N$, indicates $L^2$-functions which are $q$-times weakly differentiable with weak derivatives in $L^2$.
	Moreover, the subspace $BV(I;X)\subseteq L^1(I;X)$ consists of functions $f\in L^1(I;X)$
	with
	$$
		\essvar_I(f):=\inf\big\{\var_I(g)\,|\,\text{$g=f$ $\C L^1$-a.e. in $I$}\big\}<+\infty,
	$$
	and
	$$
		\var_I(f):=\sup\Big\{\sum_{i=1}^{k-1}\|f(t_{i+1})-f(t_i)\|\,\Big|\,t_1<t_2<\ldots<t_k\text{ with }t_1,t_2,\ldots, t_k\in I\text{ for }k\geq 2\Big\}.
	$$
	To every $f\in BV(I;X)$, we can choose a representant (also denoted by $f$) with $\var_I(f)<+\infty$.
	Then the values $f(t^\pm):=\lim_{s\rightarrow t^\pm}f(s)$ exist for all $t\in \ol{I}$
	(and are independent of the representant)
	by adapting the convention $f\big((\inf I)^-\big):=f\big((\inf I)^+\big)$ and $f\big((\sup I)^+\big):=f\big((\sup I)^-\big)$.
	The functions $f^+(t):=f(t^+)$ and $f^-(t):=f(t^-)$ are thus uniquely defined for every $t\in \ol I$ and do not coincide for at most countably many points,
	i.e., in the jump discontinuity set $J_f$.
	Furthermore, a regular measure $\mathrm df$ with finite variation, i.e. $|\mathrm df|(I)<\infty$, and with values in $X$ (called \textit{differential measure})
	can be assigned such that
	$\mathrm df((a,b])=f^+(b)-f^+(a)$ for all $a,b\in\ol I$ with $a\leq b$, cf. \cite{Din66}.
	If $X$ is a finite dimensional vector space we refer to \cite{Ambrosio00} for a comprehensive introduction.
	
	If $X$ exhibits the Radon-Nikodym property (e.g. if $X$ is reflexive) the differential measure decomposes into $\mathrm df=f'_\mu\mu$
	for a (not necessarily uniquely) positive Radon measure $\mu$ and a function $f'_\mu\in L^1(I,\mu;X)$ \cite{MV87}.
	The subspace $SBV(I;X)\subseteq BV(I;X)$ of special functions of bounded variation is defined as the space of functions
	$f\in BV(I;X)$ where the decomposition
	\begin{align*}
		\mathrm df = f'\C L^1 + (f^+-f^-)\C H^0\lfloor J_f
	\end{align*}
	for an $f'\in L^1(I;X)$ exists.
	This function $f'$ is called the absolutely continuous part of the differential measure
	and we also write $\partial_t^\mathrm{a}f$.
	If, additionally, $\partial_t^\mathrm{a}f\in L^p(I;X)$, $p\geq 1$, we write $f\in SBV^p(I;X)$.
	
	For the analysis of the system given in the next chapter, it is convenient to introduce local Sobolev functions on shrinking sets.
	Let $G\subseteq\ol{\Omega_T}$ be a subset.
	The intersection of $G$ at time $t\in[0,T]$, i.e. $G\cap(\ol\Omega\times\{t\})$, is denoted by $G(t):=\{x\in\ol\Omega\,|\,(x,t)\in G\}$.
	We call $G$ \textit{shrinking} if $G$ is relatively open in $\ol{\Omega_T}$ and $G(s)\subseteq G(t)$ for
	arbitrary $0\leq t\leq s\leq T$.

	In the sequel, $G\subseteq\ol{\Omega_T}$ denotes a shrinking set.
	We define the following time-dependent local Sobolev space:
	\begin{align}
		L_t^2H^q_{x,\mathrm{loc}}(G;\R^N)
			:=\Big\{&v:G\rightarrow\R^N\;\big|\;
			\forall t\in(0,T],\,\forall U\compact G(t)\text{ open}\,:\;
			v|_{U\times (0,t)}\in L^{2}(0,t;H^{q}(U;\R^N))\Big\}.
		\label{eqn:localSobolevSpace}
	\end{align}
	Here, $L_t^2H^0_{x,\mathrm{loc}}(G;\R^N)$ coincides with $L_\mathrm{loc}^{2}(G;\R^N)$ (see Section \ref{subsection:covering}).
	$L_\mathrm{loc}^2(G;\R^N)$ denotes the classical local $L^2$-Lebesgue space on $G$ given by
	$$
		L_\mathrm{loc}^2(G;\R^N):=\Big\{v:G\rightarrow\R^N\;\big|\;\forall V\compact G\text{ open}\,:\;v|_V\in L^{2}(V;\R^N)\Big\}.
	$$
	(Note that we do not demand that $G$ should be open.)
	As usual, we set $L_t^2H^q_{x,\mathrm{loc}}(G):=L_t^2H^q_{x,\mathrm{loc}}(G;\R)$.
	At fixed time points $t\in(0,T)$, we find $v(t)\in H_\mathrm{loc}^{q}(G(t);\R^N)$.
	If $q\geq 1$ we write $\nabla v$ for the weak derivative with respect to the spatial variable
	as well as $\epsilon(v):=\frac12(\nabla v+(\nabla v)^\mathrm{T})$ for its symmetric part.
	The precise definition and characterization of $\nabla v$ can be found in Proposition \ref{prop:localSob}.

	Given $v\in L_t^2H^q_{x,\mathrm{loc}}(G;\R^N)$ with $q\geq 1$,
	we say that $v=b$ on $D_T\cap G$ with $D\subseteq\partial\Omega$ and $\C H^{n-1}(D)>0$ if
	for every $t\in(0,T)$ and every open set $U\compact G(t)$ with Lipschitz boundary
	\begin{align}
	\label{eqn:boundaryValueCondition}
		\widetilde v(s)=b(s)\text{ on }\partial U\cap D\text{ in the sense of traces for a.e. }s\in(0,t),
	\end{align}
	is fulfilled with $\widetilde v:=v|_{U\times (0,t)}\in L^2(0,t;H^1(U;\R^N))$.


\section{Modeling and main results}
\label{section:modeling}
\subsection{Classical formulation}
\label{section:classicalFormulation}

	The damage model we want to study is based on the gradient-of-damage theory (see \cite{FN96}) and makes use of
	the following free energy density $\varphi$ and dissipation potential density $\varrho$:
	\begin{align}
	\label{eqn:freeEnergyFormal}
		\varphi(e,z,\nabla z):=\frac 1p|\nabla z|^p+W(e,z)+h(z),\quad\varrho(\dot z):=-\alpha\dot z+\beta|\dot z|^2+I_{(-\infty,0]}(\dot z),
	\end{align}
	where $e$ denotes the linearized strain tensor,
	$z$ the damage phase-field variable, $\alpha\geq 0$ and $\beta>0$.
	The function $W$ represents the elastic energy density and $h$ is a damage dependent potential.
	The gradient exponent $p$ satisfies $p>n$.
	The damage variable $z$ specifies the degree of damage at each reference position $x\in\Omega$ in the material, i.e.,
	$z(x)=1$ stands for an undamaged and $z(x)=0$ for a completely damaged material point whereas intermediate values represent partial damage.
	Furthermore, the irreversibility of the damage process (the solid can not heal itself) is ensured by the indicator function in $\varrho$.

	The evolution is given by the parabolic-elliptic system
	\begin{subequations}
	\begin{align}
	\label{eqn:microforceBalance}
		&-\mathrm{div}(\varphi_{,\nabla z}(e,z,\nabla z))+\varphi_{,z}(e,z,\nabla z)+\partial I_{[0,1]}(z)+\partial\varrho(\partial_t z)\ni 0,\\
	\label{eqn:forceBalance}
		&-\mathrm{div}(\varphi_{,e}(e,z,\nabla z))=l,
	\end{align}
	\end{subequations}
	where the inclusion \eqref{eqn:microforceBalance} specifies the flow rule for the damage profil
	and the elliptic equation \eqref{eqn:forceBalance} describes a quasi-static mechanical equilibrium of the forces.
	Here, $l$ are the exterior volume forces.
	Note that the two subdifferentials (denoted by $\partial$) appear on right hand side of the inclusion
	to account for the constraints $0\leq z\leq 1$ and $\partial_t z\leq 0$.
	
	This paper will cover elastic energy densities of the form
	\begin{align}
	\label{eqn:definitionW}
		W(e,z)=\frac 12 g(z)\mathbb Ce:e
	\end{align}
	with a symmetric and positive definite stiffness tensor $\mathbb C\in\mathcal L(\mathbb R_\mathrm{sym}^{n\times n})$
	and a function $g\in\C C^1([0,1];\R^+)$ with the properties
	\begin{align}
	\label{eqn:assumptiong}
		\eta\leq g'(z),\qquad g(0)=0
	\end{align}
	for all $z\in [0,1]$ and some constant $\eta>0$.
	
	We use the small strain assumption, i.e., the strain calculates as
	\begin{align}
	\label{eqn:strainDeformationRel}
		e=\epsilon(u):=\frac 12\left(\nabla u+(\nabla u)^\mathrm{T}\right),
	\end{align}
	where the right hand side denotes the symmetric gradient of the displacement field $u$.
	
	
	Note that complete damage occurs if and only if $g(0)=0$. The case $g(0)>0$ would describe incomplete damage which is already covered in the
	mathematical literature (see Section \ref{section:motivation}).
	As mentioned in the introduction, we start with a regularization, where a regularized elastic energy density
	$W^\varepsilon$, $\varepsilon>0$, is used instead of $W$. More precisely, $W^\varepsilon$ is given by
	\begin{align}
	\label{eqn:Wreg}
		W^\varepsilon(e,z)=\frac 12(g(z)+\varepsilon)\mathbb Ce:e.
	\end{align}
	
	In the complete damage regime $\varepsilon=0$, the displacement variable becomes meaningless on material fragment with maximal damage because the
	free energy density vanishes regardless of the values of $u$.
	Therefore, the balance laws \eqref{eqn:microforceBalance} and \eqref{eqn:forceBalance} make obviously only sense pointwise in $\{z>0\}$.
	Beyond that, as already mentioned in the introduction, a phenomenon (in the following called \textit{material exclusion}) might occur:
	
	Suppose that at a specific time point $t$, a path-connected component $P$ (relatively open in $\ol\Omega$) in
	$\{x\in\ol\Omega\,|\,z(x,t)>0\}$ is isolated from the Dirichlet boundary, i.e. $\C H^{n-1}(P\cap D)=0$.
	In this case, path-connected components $P$ of the not completely damaged area $\{z(t)>0\}$ isolated from the
	Dirichlet boundary, i.e. $\C H^{n-1}(P\cap D)=0$, will be excluded in our proposed model
	since the detached parts might be of little interest in the evolution
        process, see Figure 1. 
	This type of modeling leads to an evolution system on a time-dependent
        shrinking domain and motivates the definition of maximal admissible subsets.
	
	\begin{definition}[Admissible subsets of $\ol\Omega$]$\;$
	\label{def:admissibleSubset}
		\begin{enumerate}
			\renewcommand{\labelenumi}{(\roman{enumi})}
			\item
				Let $F\subseteq \ol\Omega$ be a relatively open subset and
				$$
				P_F(x):=\big\{y\in F\,|\,\text{$x$ and $y$ are connected by a path in }F\,\big\}
				$$
				for $x\in F$.
				We say that $F$ is \textit{admissible} with respect to the Dirichlet boundary $D$ if for every $x\in F$
				the condition
				\begin{align*}
					\C H^{n-1}(P_F(x)\cap D)>0
				\end{align*}
				is fulfilled.
				Furthermore, $\frak A_D(F)$ denotes the maximal admissible subset of $F$ with respect to $D$, i.e.,
				\begin{align*}
					\frak A_D(F):=\bigcup \left\{G\subseteq F\,|\,G\text{ is admissible with respect to }D\right\}.
				\end{align*}
			\item
				For a relatively open subset $F\subseteq \ol{\Omega_T}$, the set $\frak A_D(F)$ is given by $(\frak A_D(F))(t):=\frak A_D(F(t))$.
		\end{enumerate}
	\end{definition}
	
	In a nutshell, the evolutionary problem \eqref{eqn:microforceBalance} and \eqref{eqn:forceBalance} is
	considered on a time-dependent domain (a shrinking set) which is, for any time, admissible with respect to $D$.
	The whole evolution problem with its initial-boundary conditions can be summarized within a classical notion in the following way
	(note that, due to the monotonicity of the damage function $z$ with respect to time, the set $F:=\frak A_D\big(\{z>0\}\big)$
	is shrinking).

	\begin{definition}[Classical solution for complete damage]
	\label{def:classicalSolution}
		\hspace*{0.1em}\\
		A pair of functions
		$(u,z)$ with $u\in\mathcal C_x^2(F;\R^n)$, $z\in \C C^2(\ol{\Omega_T};\R)$,
		where the shrinking set $F$ is given by $F:=\mathfrak A_{D}\big(\{z>0\}\big)$
		is called a classical solution to the initial-boundary data $(z^0,b)$ if\\
		\begin{tabular}{ll}
			\begin{minipage}{19.0em}
				{
					\begin{flalign*}
						&\qquad l=-\mathrm{div}(\partial_e W^\mathrm{el}(\epsilon(u),z))&\\
						&\qquad 0\in-\mathrm{div}(|\nabla z|^{p-2}\nabla z)+W_{,z}(\epsilon(u),z)+h'(z)\hspace*{4em}&\\
						&\qquad \qquad -\alpha+\beta\partial_t z+\partial I_{(-\infty,0]}(\partial_t z)&
					\end{flalign*}
					}
			\end{minipage}
			&
			\begin{minipage}{14.8em}
				{
					\begin{subequations}
					\begin{flalign*}
							&\hspace*{-1.5em}\text{ in }F,&\\\notag\\
							&\hspace*{-1.5em}\text{ in }F,&
					\end{flalign*}
					\end{subequations}
				}
			\end{minipage}
		\end{tabular}\\
		and the initial-boundary conditions\\
		\begin{tabular}{ll}
			\begin{minipage}{25.5em}
				{
					\begin{flalign*}
						&\qquad z(0)=z^0&\\
						&\qquad u(t)=b(t)&\\
						&\qquad \partial_e W^\mathrm{el}(\epsilon(u(t)),z(t))\cdot\nu=0&\hspace*{5em}\\
						&\qquad z(t)=0&\\
						&\qquad \nabla z(t)\cdot\nu=0&
					\end{flalign*}
					}
			\end{minipage}
			&
			\begin{minipage}{14.7em}
				{
					\begin{subequations}
					\begin{flalign*}
							&\hspace*{-1.5em}\text{ in }F(0),&\\
							&\hspace*{-1.5em}\text{ on }\Gamma_1(t):=F(t)\cap D,&\\
							&\hspace*{-1.5em}\text{ on }\Gamma_2(t):=F(t)\cap (\partial\Omega\setminus D),&\\
							&\hspace*{-1.5em}\text{ on }\Gamma_3(t):=\partial F(t)\setminus F(t),&\\
							&\hspace*{-1.5em}\text{ on }\Gamma_1(t)\cup\Gamma_2(t),&
					\end{flalign*}
					\end{subequations}
				}
			\end{minipage}
		\end{tabular}\\
	\end{definition}
	
	\begin{remark}
			The time-dependent boundary $\partial F(t)$ disjointly decomposes into $\Gamma_1(t)\cup\Gamma_2(t)\cup\Gamma_3(t)$,
			where $\Gamma_1(t)$ indicates the not completely damaged Dirichlet boundary, $\Gamma_2(t)$ the not completely damaged
			Neumann boundary and $\Gamma_3(t)$ the completely damaged boundary (see Figure 2).
			We have the following types of boundary conditions:
			\begin{align*}
				\Gamma_1(t)\qquad\text{---}\qquad &\text{Dirichlet boundary condition for }u\\
					&\text{Neumann boundary condition for }z\\
				\Gamma_2(t)\qquad\text{---}\qquad &\text{Neumann boundary condition for }u\\
					&\text{Neumann boundary condition for }z\\
				\Gamma_3(t)\qquad\text{---}\qquad &\text{degenerated boundary condition}
			\end{align*}
			On the degenerated boundary, $z$ vanishes (homogeneous Dirichlet boundary condition for $z$) and, therefore,
			if we assume that $\epsilon(u)$ can be continuously extended to $\Gamma_3$ the stress $W_{,e}(\epsilon(u),z)$
			vanishes too.
	\end{remark}
	\begin{figure}[h!]
		\begin{center}
			\includegraphics[width=6.4in]{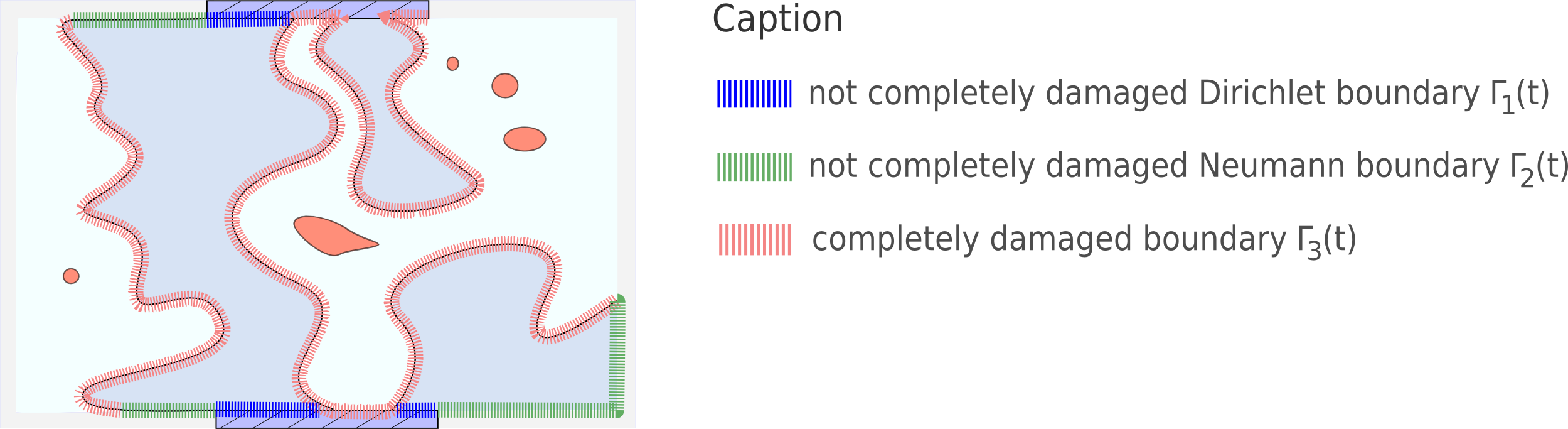} 
		\end{center}
		\caption{\textit{Illustration of the different parts of the boundary of $F(t)$.
		}}
		\label{fig:boundary}
	\end{figure}

	The goal of the next section is to state a suitable weak formulation
        for our introduced complete damage model. 
	Due to the high degree of degeneracy and the non-smoothness of $F$,
	$u$ can only be expected to be in some local Sobolev space on the shrinking set $F$ introduced in \eqref{eqn:localSobolevSpace}.

\subsection{Weak formulation and justification}
	
	For a weak formulation of the system presented in Section \ref{section:classicalFormulation}, we choose the free energy
	$\C E$ whose density has already been given in
        \eqref{eqn:freeEnergyFormal}. To shorten the presentation, we assume
        $h=0$ in  \eqref{eqn:freeEnergyFormal} and  $l=0$ in
        \eqref{eqn:forceBalance}. 
	The analysis in this paper also works for exterior volume forces and $\C C^1$--potential functions $h$ as well.
	
        In contrast to \cite{KRZ11} for incomplete damage models and related works,
	we will not use a purely energetic approach
	but rather a mixed variational/energetic formulation as presented in \cite{WIAS1520}.
	
	\begin{definition}[Free energy]
	\label{definition:freeEnergy}
		Let $e\in L^2(\Omega;\R_\mathrm{sym}^{n\times n})$ and $z\in W^{1,p}(\Omega)$ be given.
		The associated free energy of the system in Definition \ref{def:classicalSolution} is given by
		\begin{align*}
			\C E(e,z):=\int_{\Omega}\left(\frac 1p|\nabla z|^p+W(e,z)\right)\dx,
		\end{align*}
		whereas its $\varepsilon$-regularization with $\varepsilon>0$ (for later use) is defined as (see \eqref{eqn:Wreg})
		\begin{align*}
			\C E_\varepsilon(e,z):=\int_{\Omega}\left(\frac 1p|\nabla z|^p+W^\varepsilon(e,z)\right)\dx.
		\end{align*}
		If $e$ is only defined on a measurable subset $H\subset\Omega$, i.e. $e\in L^2(H;\R_\mathrm{sym}^{n\times n})$,
		we use the convention $\C E(e,z):=\C E(\widetilde e,z)$, where $\widetilde e:=e$ in $H$ and $\widetilde e:=0$ in $\Omega\setminus H$.
	\end{definition}
	We are now able to give a weak formulation of the system
	in an $SBV$-setting (with respect to the damage variable).
	In accordance to Definition \ref{def:classicalSolution}, $z$ is extended on whole $\ol{\Omega_T}$
	and, when viewed as an $SBV^2(0,T;L^2(\Omega))$-function, $z$ has a jump at $t$ if and only if a material exclusion occurs at $t$.
	\begin{definition}[Weak solution] A pair $(u,z)$ is called a weak solution
	of the system given in Definition \ref{def:classicalSolution} with the initial-boundary data $(z^0,b)$ if
	\label{def:weakSolution}
		\begin{enumerate}
			\renewcommand{\labelenumi}{(\roman{enumi})}
			\item
				\textit{Regularity:}
				\begin{align*}
				\begin{aligned}
					&z\in L^\infty(0,T;W^{1,p}(\Omega))\cap SBV^2(0,T;L^2(\Omega)),
					&&u\in L_t^2H^1_{x,\mathrm{loc}}(F;\R^n)
				\end{aligned}
				\end{align*}
				with $\epsilon(u)=:e\in L^2(F;\R_\mathrm{sym}^{n\times n})$
				where $F:=\frak A_D(\{z^->0\})\subseteq\ol{\Omega_T}$ is a shrinking set.
			\item
				\textit{Quasi-static mechanical equilibrium:}
				\begin{align}
				\label{eqn:forceBalanceWeak}
					0=\int_{F(t)}W_{,e}(e(t),z(t)):\epsilon(\zeta)\dx
				\end{align}
				for a.e. $t\in(0,T)$ and 
				for all $\zeta\in H_{D}^1(\Omega;\R^n)$.
				Furthermore, $u=b$ on $D_T\cap F$.
			\item
				\textit{Damage one-sided variational inequality: }
				\begin{align}
				\label{eqn:VI}
					\int_{F(t)}\left(|\nabla z(t)|^{p-2}\nabla z(t)\cdot\nabla\zeta+W_{,z}(e(t),z(t))\zeta\right)\dx
					&\geq\int_{\Omega}(\alpha-\beta\partial_t^\mathrm{a} z(t))\zeta \dx\\
					0&\leq z(t)\text{ in }\Omega,\notag\\
					0&\geq\partial_t^\mathrm{a} z(t)\text{ in }\Omega\notag
				\end{align}
				for a.e. $t\in(0,T)$ and for all $\zeta\in W^{1,p}(\Omega)$ with $\zeta\leq 0$.
				The initial value is given by
				$z^+(0)=z^0$ with $0\leq z^0\leq 1$ in $\Omega$.
			\item
				\textit{Damage jump condition:}
				\begin{align}
				\label{eqn:damageJumpCondition}
					z^+(t)=z^-(t)\mathds 1_{F(t)}\text{ in }\Omega
				\end{align}
				for all $t\in[0,T]$.
			\item
				\textit{Energy inequality: }
				\begin{align}
					&\C E(e(t),z(t))+\int_0^t\int_{F(s)}\left(\alpha|\partial_t^\mathrm{a} z|+\beta|\partial_t^\mathrm{a} z|^2\right)\dxs
					+\sum_{s\in J_{z}\cap(0,t]}\left(\C E^-(s)-\C E^+(s)\right)\notag\\
				\label{eqn:EI}
					&\qquad\leq  \C E^+(0)+\int_0^t\int_{F(s)}W_{,e}(e,z):\epsilon(\partial_t b)\dxs
				\end{align}
				for a.e. $t\in(0,T)$ with 
				$ \C E^-(s):= \lim_{\tau\rightarrow  s^-}\mathop\mathrm{ess\,inf}_{\vartheta\in(\tau,s)}
				\C E(e(\vartheta),z(\vartheta))$ 
				and $\C E^+(s)=\frak e_s^+$, where
				$\frak e_s^+\in\R_+$ is any value satisfying the upper energy
				estimate
				\begin{align}
				\label{eqn:variationalEnergyIneq}
				  \frak e_s^+\leq \C E(\epsilon(b(s)+\zeta),z^+(s))
				\end{align}
				for all $\zeta\in
                                H_\mathrm{loc}^1(\{z^+(s)>0\};\R^n)$ with
                                $\zeta=0$ on $D\cap\{z^+(s)>0\}$.
                                In addition, 
                                $$0\leq \C E^-(s)-\C E^+(s).$$
		\end{enumerate}
	\end{definition}
	\begin{remark}
	\label{remark:weakSolution}
		\begin{enumerate}
			\renewcommand{\labelenumi}{(\roman{enumi})}
                        \item  For all times $t$ we have 
                          $z^-(t) \in W^{1,p}(\Omega)$.  \\ 
                          This property 
                          can be derived as follows. 
                          Let $t\in(0,T]$ be arbitrary. Since $z\in
                            L^\infty(0,T;W^{1,p}(\Omega))$, 
                            we find a sequence $t_n\nearrow t$ such that
		            $\|z(t_n)\|_{W^{1,p}}<C$ for all $n\in\mathbb N$.
		            Because of this and because of $z(t_n)\rightarrow  z^-(t)$ 
                            in $L^2(\Omega)$ as $n\to\infty$ (since $z\in
                            SBV(0,T;L^2(\Omega))$),
                            we even obtain
		            $z(t_n)\rightharpoonup z^-(t)$ in $W^{1,p}(\Omega)$.
		            In particular $z^-(t) \in W^{1,p}(\Omega)$.
                          
			\item
				Lemma \ref{lemma:truncation} and (i) ensure that for all times $t$ we have $z^-(t)\mathds 1_{F(t)}\in W^{1,p}(\Omega)$ (see jump
				condition \eqref{eqn:damageJumpCondition}).
			\item
				Jump condition \eqref{eqn:damageJumpCondition} and the definition of $F$ imply
				$\{z^+(t)>0\}=F(t)$ for all $t\in[0,T]$.
				By the convention introduced in Definition \ref{definition:freeEnergy},
				$$
					\C E(e(t),z(t))=\int_{F(t)}\left(\frac 1p|\nabla z(t)|^p+W(e(t),z(t))\right)\dx,
				$$
				which equals $\int_{\{z(t)>0\}}\left(\frac 1p|\nabla z(t)|^p+W(e(t),z(t))\right)\dx$ for a.e. $t\in(0,T)$.
			\item
				The jump term $\C E^-(s)-\C E^+(s)$ equals the energy of the excluded material parts at time point $s$,
				i.e. $\C E^-(s)-\C E^+(s)=\C E(s^-)-\C E(s^+)$ (for smooth solutions on $F$), where $t\mapsto\C E(e(t),z(t))$ denotes the energy function
				along the trajectory.
				However, for less regular weak solutions as in Definition \ref{def:weakSolution}, the one-sided limits $\C E(s^-)$ and
				$\C E(s^+)$ possibly do not exist as $\C E(t)$
                                is only in $L^\infty(0,T)$.
				But, in any case, $\lim_{\tau\rightarrow s^-}\mathop\mathrm{ess\,inf}_{\vartheta\in (\tau,s)}
				\C E(\vartheta)$ clearly exists and coincides with $\C E(s^-)$ for smooth solutions.
				The value $\C E(s^+)$, on the other hand, can be avoided in a rather indirect way by using upper energy estimates.
				More precisely, it turns out that $\C E(s^+)$ can be substituted by values (denoted by $\frak e_s^+$) merely satisfying
				\eqref{eqn:variationalEnergyIneq}.
				Together with equations \eqref{eqn:forceBalanceWeak}-\eqref{eqn:EI}, $\frak e_s^+$ is forced to coincide
				with $\C E(s^+)$ for smooth solutions.
				This is particularly shown in the proof of the following theorem.
		\end{enumerate}
		
	\end{remark}

	\begin{theorem}
	\label{theorem:weakImpliesClassical}
		Let $(u,z)$ be a weak solution according to Definition \ref{def:weakSolution}.
		We assume the regularity properties
		$u\in\mathcal C^2(\ol{\Omega_T};\R^n)$ with $u=b$ on $D_T$ and
		$z=\widetilde z$ in $F$ for a $\widetilde z \in\mathcal C^2(\ol{\Omega_T};\R)$.
		Then, $(u,\widetilde z)$ is a classical solution according to Definition \ref{def:classicalSolution}.
	\end{theorem}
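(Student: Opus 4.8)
Assertion (i) of Definition~\ref{def:classicalSolution} is contained in the hypotheses, so the plan is to read off the pointwise evolution laws and the initial/boundary conditions from the weak relations, in the order below. I will repeatedly use the following elementary consequences of the $C^2$-regularity and the weak conditions: by \eqref{eqn:damageJumpCondition} and Remark~\ref{remark:weakSolution}(ii) one has $F(t)=\{z^+(t)>0\}$ for every $t$; the continuous representative $z|_{\ol F}$ agrees with $z^+(t)$ on $F(t)$ (right-continuity of $z^+$ plus density of the non-exclusion times), whence $z>0$ on $F$, $z=0$ on $\Gamma_3=\partial F(t)\setminus F(t)$, and $\partial_t^{\mathrm a}z\le 0$ persists on $\ol F(t)$ by continuity; finally $W_{,e}(\epsilon(u),z)=g(z)\,\mathbb Ce$ is of class $C^1$ up to $\ol F$ and \emph{vanishes on $\Gamma_3$} because $g(0)=0$. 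That $F=\frak A_D(\{z^->0\})$ satisfies \eqref{eqn:evolutionF} follows from the time-monotonicity of $z$ and the definition of $\frak A_D$.

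\textbf{Mechanical equilibrium and the conditions on $u$.} Testing \eqref{eqn:forceBalanceWeak} with $\zeta\in C_c^\infty(F(t)\cap\Omega;\R^n)$ and integrating by parts gives $\mathrm{div}(W_{,e}(\epsilon(u),z))=0$ on $F(t)\cap\Omega$, hence, by continuity of the $C^1$-extension and joint continuity in $(x,t)$, on all of $F$. For the Neumann condition I localize $\zeta\in H^1_D(\Omega;\R^n)$ near a point of $\Gamma_2(t)$ — legitimate because $F(t)$ is relatively open, so such a point has a relative neighbourhood inside $F(t)$, away from $\Gamma_3$ — integrate \eqref{eqn:forceBalanceWeak} by parts over $F(t)$, and use the equilibrium PDE, $\zeta=0$ on $\Gamma_1\subseteq D$ and $W_{,e}=0$ on $\Gamma_3$ to obtain $W_{,e}(\epsilon(u),z)\cdot\nu=0$ on $\Gamma_2(t)$. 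The Dirichlet condition $u(t)=b(t)$ on $\Gamma_1(t)=F(t)\cap D$ and the initial condition $z(0)=z^0$ on $F(0)$ are the weak conditions ``$u=b$ on $D_T\cap F$'' and ``$z^+(0)=z^0$'' read through the continuous representatives.

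\textbf{The energy inequality is saturated.} Between the (at most countably many) exclusion times $F(t)$ is constant up to a null set and $u,z$ are smooth, so $t\mapsto\C E(e(t),z(t))$ is absolutely continuous there, with $\frac{\mathrm d}{\mathrm dt}\C E=\int_{F(t)}|\nabla z|^{p-2}\nabla z\cdot\nabla\partial_t z+W_{,z}\partial_t z\dx+\int_{F(t)}W_{,e}:\epsilon(\partial_t u)\dx$; integrating $\int_{F(t)}W_{,e}:\epsilon(\partial_t u-\partial_t b)$ by parts and invoking the equilibrium PDE, the Neumann condition just proved, $\partial_t u-\partial_t b=0$ on $\Gamma_1$ and $W_{,e}=0$ on $\Gamma_3$ shows $\int_{F(t)}W_{,e}:\epsilon(\partial_t u)=\int_{F(t)}W_{,e}:\epsilon(\partial_t b)$. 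Inserting the admissible test function $\zeta=\partial_t^{\mathrm a}z(t)\le 0$ into \eqref{eqn:VI} gives $\int_{F(t)}|\nabla z|^{p-2}\nabla z\cdot\nabla\partial_t z+W_{,z}\partial_t z\dx+\int_{F(t)}\alpha|\partial_t z|+\beta|\partial_t z|^2\dx\ge 0$, hence $\frac{\mathrm d}{\mathrm dt}\C E\ge-\int_{F(t)}\alpha|\partial_t z|+\beta|\partial_t z|^2\dx+\int_{F(t)}W_{,e}:\epsilon(\partial_t b)\dx$. Integrating over $(0,t)$, bounding the total energy loss across exclusions by $\sum_{s\in J_z\cap(0,t]}\C J_s$ (with $\frak e_s^+\le\C E(\epsilon(u(s)),z^+(s))$ from \eqref{eqn:variationalEnergyIneq} applied to $\zeta=u(s)-b(s)$, so that $\C J_s\ge\C E(s^-)-\C E(s^+)$) and using likewise $\frak e_0^+\le\C E(\epsilon(u(0)),z^0)$, I obtain exactly the reverse of \eqref{eqn:EI}. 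Therefore \eqref{eqn:EI} holds with equality, and consequently the inequality just derived from $\zeta=\partial_t^{\mathrm a}z$ is in fact an identity for a.e.\ $t$.

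\textbf{The damage flow rule, its boundary condition, and the main obstacles.} From \eqref{eqn:VI} with $\zeta\in C_c^\infty(F(t)\cap\Omega)$, $\zeta\le 0$, integration by parts gives $\Lambda:=-\mathrm{div}(|\nabla z|^{p-2}\nabla z)+W_{,z}-\alpha+\beta\partial_t z\le 0$ on $F$; localizing $\zeta\le 0$ near $\Gamma_1(t)\cup\Gamma_2(t)$ and letting it concentrate on the boundary yields $|\nabla z|^{p-2}\nabla z\cdot\nu\le 0$ there. Integrating the identity of the previous step by parts produces $\int_{F(t)}\Lambda\,\partial_t z\dx+\int_{\partial F(t)}(|\nabla z|^{p-2}\nabla z\cdot\nu)\,\partial_t z\,\mathrm d\C H^{n-1}=0$; since $\Lambda\le 0$ and $\partial_t z\le 0$ on $F$, while $|\nabla z|^{p-2}\nabla z\cdot\nu\le 0$ on $\Gamma_1\cup\Gamma_2$ (just shown) and on $\Gamma_3$ (because $z\ge 0$ with $z=0$ on $\Gamma_3$ forces the gradient there to point into $F$), both integrands are nonnegative and hence vanish: $\Lambda\,\partial_t z=0$ on $F$ and $(|\nabla z|^{p-2}\nabla z\cdot\nu)\,\partial_t z=0$ on $\partial F(t)$. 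Together with $\partial_t z\le 0$ and $z>0$ in $F$ (so that $\partial I_{[0,\infty)}(z)$ drops), the relations $\Lambda\le 0$, $\Lambda\partial_t z=0$ are exactly the inclusion $0\in-\mathrm{div}(|\nabla z|^{p-2}\nabla z)+W_{,z}-\alpha+\beta\partial_t z+\partial I_{(-\infty,0]}(\partial_t z)$ of Definition~\ref{def:classicalSolution}(ii), while $|\nabla z|^{p-2}\nabla z\cdot\nu\le 0$ with the complementarity $(\nabla z\cdot\nu)\,\partial_t z=0$ is the homogeneous Neumann condition on $\Gamma_1(t)\cup\Gamma_2(t)$; combined with the conditions of the previous paragraph and $z=0$ on $\Gamma_3$, this is Definition~\ref{def:classicalSolution}. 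I expect the two genuinely delicate points to be: (a) the integrations by parts over $F(t)$, whose boundary portion $\Gamma_3$ may be badly non-Lipschitz — these are to be carried out on smooth compactly embedded exhausting subdomains (Section~\ref{subsection:covering}), passing to the limit using $W_{,e}(\epsilon(u),z)|_{\Gamma_3}=0$ and the control of $\partial_t z$ near $\Gamma_3$ so that the spurious boundary contributions disappear; and (b) the rigorous differentiation of the energy along the trajectory, with the bookkeeping of the countably many exclusion times, the time-monotone shrinking of $F$, and the substitute energies $\frak e_s^+$ — this is what makes the saturation of \eqref{eqn:EI}, and hence the passage from the variational inequality \eqref{eqn:VI} to the differential inclusion via the complementarity $\Lambda\partial_t z=0$, go through.
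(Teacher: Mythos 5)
Your overall architecture is the same as the paper's: (a) an energy balance along the trajectory obtained by differentiating $t\mapsto\C E(e(t),z(t))$ and eliminating the elastic power via the equilibrium equation, (b) pinning the substitute energies through \eqref{eqn:variationalEnergyIneq} tested with $\zeta=u(s)-b(s)$ (and $\frak e_0^+$ with $\zeta=u(0)-b(0)$), (c) concluding that \eqref{eqn:EI} is saturated, and (d) upgrading \eqref{eqn:VI} to the differential inclusion through the resulting complementarity. Two of your substitutions are worth noting. The paper eliminates the elastic power by testing \eqref{eqn:forceBalanceWeak} directly with $\zeta=\partial_t u(s)-\partial_t b(s)\in H^1_D(\Omega;\R^n)$, which avoids any spatial integration by parts over $F(t)$; your route integrates by parts over $F(t)$ and therefore drags in the possibly very irregular boundary $\Gamma_3(t)$, a difficulty you then have to defuse by exhaustion arguments that the paper simply never needs for this step. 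Similarly, for the final complementarity the paper stays at the level of the integrated identity \eqref{eqn:energyIdentity} (obtained from the $SBV$ chain rule, Corollary \ref{cor:chainRule}) and of \eqref{eqn:VI}, rather than localizing $\Lambda\le 0$ plus boundary sign conditions.

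There are two genuine weak points. First, your justification of the time-differentiation of the energy rests on the claim that ``between the exclusion times $F(t)$ is constant up to a null set.'' This is false: $F(t)=\frak A_D(\{z^-(t)>0\})$ can shrink \emph{continuously} (a damage front reaching zero smoothly produces no jump of $z$ in $SBV^2(0,T;L^2(\Omega))$ but a continuously moving $\Gamma_3(t)$), and then $\tfrac{\mathrm d}{\mathrm dt}\int_{F(t)}\tfrac1p|\nabla z|^p\dx$ acquires, in general, a moving-boundary contribution proportional to $|\nabla z|^p$ on $\Gamma_3(t)$ times its normal velocity, which your formula omits. The paper absorbs this difficulty differently, by asserting $q=(e,z)\in SBV(0,T;X)$ with $X=L^2(\Omega;\R^{n\times n})\times W^{1,p}(\Omega)$ and invoking the abstract chain rule of Corollary \ref{cor:chainRule}; if you want a self-contained classical argument you must either argue as the paper does or show that the boundary contribution vanishes, not that $F(t)$ is piecewise constant. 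Second, what you actually derive on $\Gamma_1(t)\cup\Gamma_2(t)$ is $|\nabla z|^{p-2}\nabla z\cdot\nu\le 0$ together with the complementarity $(\nabla z\cdot\nu)\,\partial_t z=0$; this is strictly weaker than the homogeneous Neumann condition $\nabla z\cdot\nu=0$ required in Definition \ref{def:classicalSolution}(iii), and it cannot be improved using only the one-signed test functions $\zeta\le 0$ admitted in \eqref{eqn:VI} (the paper declares (iii) to follow ``by classical integral calculus'' without detail, so you are not alone here, but your sentence identifying the complementarity with the Neumann condition is an overstatement). A smaller shared caveat: testing \eqref{eqn:VI} with $\zeta=\partial_t^{\mathrm a}z(t)$ requires this extension-by-zero across $\Gamma_3(t)$ to lie in $W^{1,p}(\Omega)$, which is not automatic from the stated regularity; the paper makes the same move, but if you aim at full rigor this admissibility should be addressed (e.g.\ by approximation).
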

	\begin{proof}
		We are going to prove the differential inclusion in Definition \ref{def:classicalSolution}.
		The remaining properties follow with much less effort.
		
		The jump condition \eqref{eqn:damageJumpCondition} and the regularity assumption yields for a.e. $(x,t)\in\Omega_T$
		$$
			\partial_t^\mathrm{a} z(x,t)=
			\begin{cases}
				\partial_t z(x,t)&\text{if }(x,t)\in F,\\
				0&\text{if }(x,t)\in {\Omega_T}\setminus F,
			\end{cases}
		$$
		where $\partial_t z(x,t)$ is the classical time-derivative of $z$ at $(x,t)$.
		In the following, we will make use of this property.
		First, observe that by the regularity assumptions
		$q:=(e,z)\in SBV(0,T;X)$ with $X:=L^2(\Omega;\R^{n\times n})\times W^{1,p}(\Omega)$.

		Applying the chain rule (see Corollary \ref{cor:chainRule}) for the continuously Fr\'echet-differentiable energy functional $\C E$ and the
		$X$-valued SBV-function $q$ shows that $\C E\circ q$ is an SBV-function and
		\begin{align*}
			&\C E(q(t^+))- \C E(q(0^+))=\mathrm d (\C E\circ q)\big((0,t]\big)\\
			&\qquad\qquad=\int_0^t\left(\langle\mathrm d_e \C E(q(s)),\partial_t e(s)\rangle
				+\langle\mathrm d_z \C E(q(s)),\partial_t^\mathrm{a} z(s)\rangle\right)\ds\\
			&\qquad\qquad\quad+\sum_{s\in J_{z}\cap(0,t]}\left(\C E(q(s^+))-\C E(q(s^-))\right).
		\end{align*}
		The two terms in the integral on the right hand side can be treated as follows.
		\begin{itemize}
			\item[\textbullet]
				Taking into account $z=0$ in $\Omega_T\setminus F$ and testing \eqref{eqn:forceBalanceWeak} with $\zeta=\partial_t u(s)-\partial_t b(s)$, yields
				\begin{align*}
					\langle\mathrm d_e \C E(q(s)),\partial_t e(s)\rangle&=\int_{\Omega}W_{,e}(\epsilon(u(s)),z(s)):\epsilon(\partial_t u(s))\dx\\
						&=\int_{F(s)}W_{,e}(\epsilon(u(s)),z(s)):\epsilon(\partial_t u(s))\dx\\
						&=\int_{F(s)}W_{,e}(\epsilon(u(s)),z(s)):\epsilon(\partial_t b(s))\dx.
				\end{align*}
			\item[\textbullet]
				Using the property $\partial_t^\mathrm{a} z=0$ in $\Omega_T\setminus F$,
				\begin{align*}
					&\langle\mathrm d_z \C E(q(s)),\partial_t^\mathrm{a} z(s)\rangle\\
					&\qquad\qquad=\int_\Omega\left(|\nabla z(s)|^{p-2}\nabla z(s)\cdot\nabla \partial_t^\mathrm{a} z(s)
						+W_{,z}(\epsilon(u(s)),z(s))\partial_t^\mathrm{a} z(s)\right)\dx\\
					&\qquad\qquad=\int_{F(s)}\left(|\nabla z(s)|^{p-2}\nabla z(s)\cdot\nabla \partial_t{z}(s)+W_{,z}(\epsilon(u(s)),z(s))\partial_t z(s)\right)\dx.
				\end{align*}
		\end{itemize}
		Putting the pieces together, we end up with
		\begin{align}
			&\C E(q(t^+))+\sum_{s\in J_{z}\cap(0,t]}\left(\C E(q(s^-))-\C E(q(s^+))\right)\notag\\
			&\qquad=\C E(q(0^+))+\int_0^t\int_{F(s)}W_{,e}(\epsilon(u),z):\epsilon(\partial_t b)\dxs\notag\\
			&\qquad\quad+\int_0^t\int_{F(s)}\left(|\nabla z|^{p-2}\nabla z\cdot\nabla\partial_t z+W_{,z}(\epsilon(u),z)\partial_t z\right)\dxs.
		\label{eqn:energyIdentity}
		\end{align}
		Note that we have
		$\C E(q(0^+))=\frak e_0^+$.
		Indeed, passing $t\rightarrow0^+$ in \eqref{eqn:EI}
		yields $\C E(q(0^+))\leq\frak e_0^+$.
		The ``$\geq$''-inequality follows from \eqref{eqn:variationalEnergyIneq} tested with $\zeta=u(0)-b(0)$.
		
		Therefore, \eqref{eqn:EI} particularly implies
		\begin{align}
			&\C E(q(t^+))+\int_0^t\int_{F(s)}\left(\alpha|\partial_t z|+\beta|\partial_t z|^2\right)\dxs
			+\sum_{s\in J_{z}\cap(0,t]}\left(\C E^-(s)-\C E^+(s)\right)\notag\\
		\label{eqn:EImod}
			&\qquad\leq \C E(q(0^+))+\int_0^t\int_{F(s)}W_{,e}(e,z):\epsilon(\partial_t b)\dxs
		\end{align}
		Integrating \eqref{eqn:VI} on $[0,t]$ with respect to time, testing it with $\zeta=\partial_t^\mathrm{a} z\leq 0$,
		applying it to \eqref{eqn:energyIdentity} and comparing the result
		with the energy inequality \eqref{eqn:EImod} shows
		\begin{align}
			&\C E(q(t^+))+\sum_{s\in J_{z}\cap(0,t]}\left(\C E(q(s^-))- \C E(q(s^+))\right)
				+\int_0^t\int_{F(s)}\left(-\alpha\partial_t z+\beta|\partial_t z|^2\right)\dxs\notag\\
			&\qquad\qquad\geq \C E(q(0^+))+\int_0^t\int_{F(s)}W_{,e}(\epsilon(u),z):\epsilon(\partial_t b)\dxs\notag\\
			&\qquad\qquad\geq
				 \C E(q(t^+))+\sum_{s\in J_{z}\cap(0,t]}\left(\C E^-(s)-\C E^+(s)\right)
				+\int_0^t\int_{F(s)}\left(-\alpha\partial_t z+\beta|\partial_t z|^2\right)\dxs.
			\label{eqn:bothSideEnergyEst}
		\end{align}
		Taking the definitions of $\C E^-$ and  $\C E^+$ into account and 
                using
		$\C E(q(s^-))=\lim_{\tau\rightarrow s^-}\mathop\mathrm{ess\,inf}_{\vartheta\in (\tau,s)}
		\C E(q(\vartheta))$, estimate \eqref{eqn:bothSideEnergyEst} yields
		\begin{align}
		\label{eqn:energyLessFrakEnergy}
			\sum_{s\in J_{z}} \C E(q(s^+))
				\leq \sum_{s\in J_{z}}\frak e_s^+.
		\end{align}
		On the other hand, by \eqref{eqn:variationalEnergyIneq}, we find
		$\frak e_s^+\leq\C E(q(s^+))$ for all $s\in J_z$.
		Combining this with \eqref{eqn:energyLessFrakEnergy} shows $\C E(q(s^+))=\frak e_s^+$ for all $s\in J_{z}$.
		
		Therefore,
		$\C E^-(s)-\C E^+(s)=\C E(q(s^-))-\C E(q(s^+))$
		and \eqref{eqn:bothSideEnergyEst} becomes an equality.
		Taking also \eqref{eqn:energyIdentity} into account gives
		\begin{align*}
			0&=\int_0^t\int_{F(s)}\left(|\nabla z|^{p-2}\nabla z\cdot\nabla\partial_t z+W_{,z}(\epsilon(u),z)\partial_t z
				-\alpha\partial_t z+\beta|\partial_t z|^2\right)\dxs.
		\end{align*}
		Together with the variational inequality \eqref{eqn:VI} and the regularity assumptions, we obtain
		\begin{align*}
		\begin{split}
			0\leq \int_{F(s)}&\Big(-\mathrm{div}(|\nabla z(s)|^{p-2}
				\nabla z(s))+W_{,z}(\epsilon(u(s)),z(s))-\alpha+\beta\partial_t z(s)\Big)(\zeta-\partial_t z(s))\dx
		\end{split}
		\end{align*}
		for all $s\in(0,T)$ and for all $\zeta\in L^1(F(s))$ with $\zeta\leq 0$.
		This leads to
		\begin{align*}
			0\leq \Big(-\mathrm{div}(|\nabla z|^{p-2}\nabla z)+W_{,z}(\epsilon(u),z)-\alpha+\beta\partial_t z\Big)(\zeta-\partial_t z)
		\end{align*}
		for a.e. $(x,t)\in F$. By the regularity assumptions, this inequality holds pointwise in $F$.
		Therefore, the differential inclusion in Definition \ref{def:classicalSolution} (ii) is shown.
		\ep
	\end{proof}

	\begin{tabular}{ll}
		\hspace*{-2.4em}
		\begin{minipage}{19em}
			{
				One main goal of this work is to prove existence of weak solutions according to Definition \ref{def:weakSolution}.
				Due to the application of Zorn's lemma used in the global existence proof, analytical problems arises when infinitely
				many exclusions of material parts occur in arbitrary short time intervals in the ``future`` (see Figure 3),
				i.e., cluster points from the right of the jump set $J_{z^{\star}}$
				(denoted by $C_{z^{\star}}$ in the following) where 
				$z^{\star}\in SBV(0,T;L^2(\Omega))$ is given by $z^{\star}(t):=z(t)\mathds 1_{\frak A_D(\{z^-(t)>0\})}$.
				In this case, we can show that the shrinking set $F$ can be represented by $\frak A_D(\{z^->0\})$
				up to an arbitrarily small ''error''.
				In addition, the strain $e$ can be identified
				as the symmetric gradient of $u$ in $\frak A_D(F)$.\vspace*{0.5em}
				}
		\end{minipage}
		&
		\begin{minipage}{23em}
			{
				\begin{center}
					\includegraphics[width=3.4in]{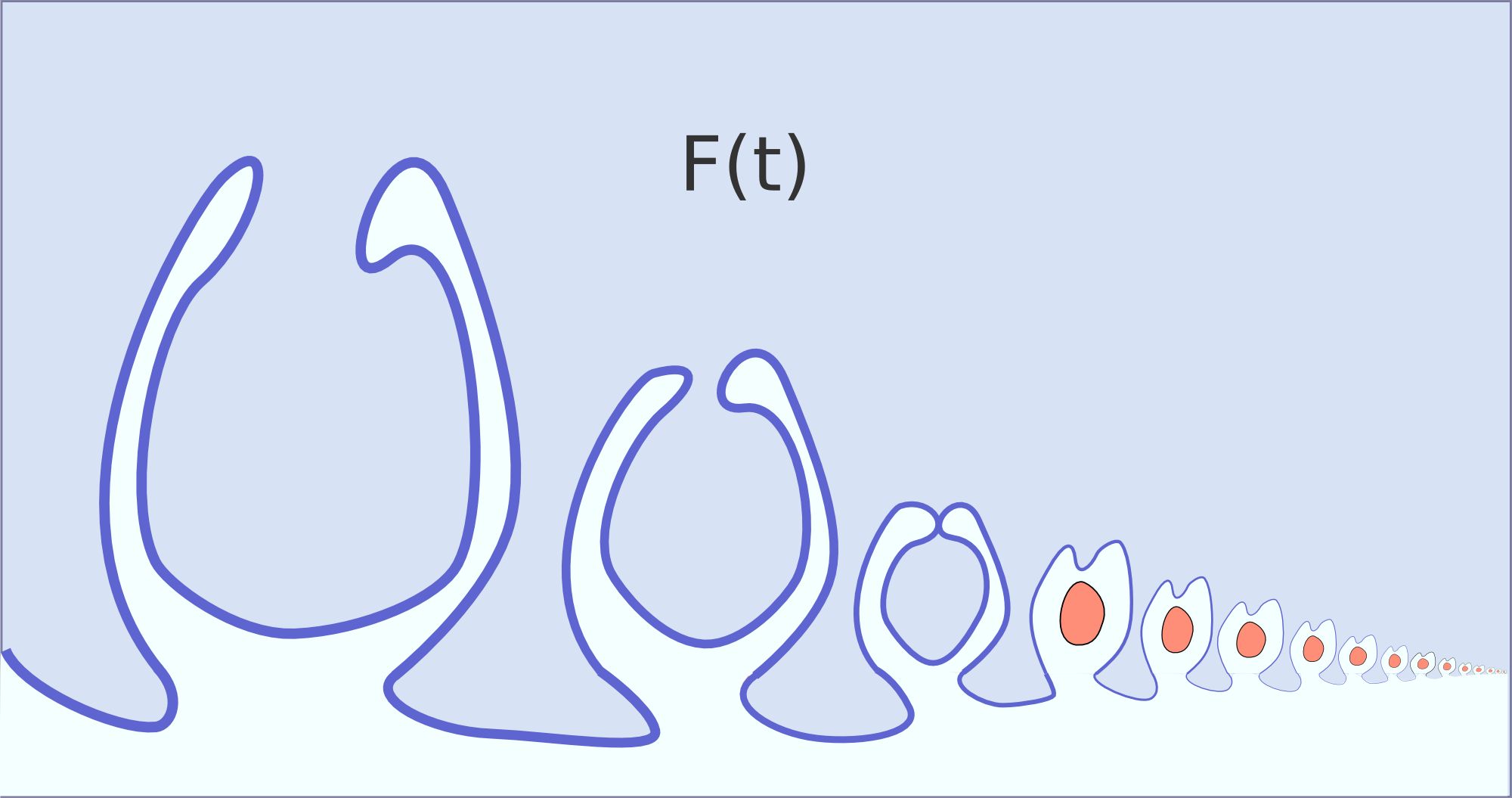} 
				\end{center}
				Figure 3: \textit{An example of a shrinking set where infinitely many exclusions during an ''infinitesimal`` time-interval have occurred.}
			}
		\end{minipage}
	\end{tabular}
	
	To be precise, we introduce the following notion.

	\begin{definition}[Weak solution with fineness ${\bm \eta}$]
		A triple $(e,u,z)$ and a shrinking set $F\subseteq\ol{\Omega_T}$ is called a weak solution with fineness $\eta>0$
		of the system according to Definition \ref{def:classicalSolution} to the initial-boundary data $(z^0,b)$ if
		\label{def:approxWeakSolution}
		\begin{enumerate}
			\renewcommand{\labelenumi}{(\roman{enumi})}
			\item
				\textit{Regularity:}
				\begin{align*}
				\begin{aligned}
					&z\in L^\infty(0,T;W^{1,p}(\Omega))\cap SBV^2(0,T;L^2(\Omega)),
					&&u\in L_t^2H^1_{x,\mathrm{loc}}(\frak A_D(F);\R^n),\\
					&e\in L^2(F;\R_\mathrm{sym}^{n\times n})
				\end{aligned}
				\end{align*}
				with $e=\epsilon(u)$ in $\frak A_D(F)$.
			\item
				\textit{Shrinking set properties:}
				\begin{align*}
				\begin{split}
					&F(t)\supseteq\frak A_D(\{z^-(t)>0\})\text{ for all }t\in[0,T],\\
					&F(t)=\frak A_D(\{z^-(t)>0\})\text{ for all } t\in[0,T]\setminus \bigcup_{t\in C_{z^{\star}}}[t,t+\eta),\\
					&\C L^n\big(F(t)\setminus \frak A_D(\{z^-(t)>0\})\big)<\eta
						\text{ for all } t\in \bigcup_{t\in C_{z^{\star}}}[t,t+\eta).
				\end{split}
				\end{align*}
			\item
				\textit{Evolutionary equations:}
	
				Properties (ii)-(v) of Definition \ref{def:weakSolution} are satisfied.
		\end{enumerate}
	\end{definition}
	\begin{remark}
		If a weak solution $(e,u,z)$ on $F$ with fineness $\eta$ according to Definition \ref{def:approxWeakSolution}
		satisfies $C_{z^{\star}}=\emptyset$ then $(u,z)$ is a weak solution according to Definition \ref{def:weakSolution}.
	\end{remark}

\subsection{Main result}
\label{section:mainResult}
	
	The main result in this work is stated in the following theorem.
	\begin{theorem}[Global-in-time existence of weak solutions with
            fineness ${\bm \eta}$]
	\label{theorem:mainExistenceResult}
		\hspace{0.01em}\\
		Let
		$b\in W^{1,1}(0,T;W^{1,\infty}(\Omega;\R^n))$ and
		$z^0\in W^{1,p}(\Omega)$ with $0\leq z^0\leq 1$ in $\Omega$ and $\{z^0>0\}$, admissible with respect to $D$, be
		initial-boundary data.
		Furthermore, let $\eta>0$ and $W$ be given by \eqref{eqn:definitionW} satisfying \eqref{eqn:assumptiong}.
		Then there exists a weak solution $(e,u,z)$ with fineness $\eta>0$ according
		to Definition \ref{def:approxWeakSolution}.
	\end{theorem}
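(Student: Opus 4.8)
The plan is to build the approximate weak solution by a two-level approximation: first regularize the elasticity (replace $W$ by $W^\varepsilon$ from \eqref{eqn:Wreg}, so $g(z)+\varepsilon$ is coercive and the incomplete-damage existence theory applies), then pass $\varepsilon\to0^+$; the Zorn's lemma argument of Section \ref{section:mainProb} handles the possibility that infinitely many material exclusions accumulate from the right, which is exactly why one only obtains an \emph{approximate} solution with fineness $\eta$. So the first step is: for fixed $\varepsilon>0$, invoke known existence for the incomplete damage system with free energy $\C E_\varepsilon$ (this is the literature cited in Section \ref{section:motivation}, e.g.\ \cite{HK10a}), obtaining $(u_\varepsilon,z_\varepsilon)$ on all of $\Omega_T$ satisfying the $\varepsilon$-analogues of \eqref{eqn:forceBalanceWeak}, \eqref{eqn:VI} and the energy (in)equality with $W^\varepsilon_{,e}$.

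Second, I would derive $\varepsilon$-uniform a priori bounds. Testing the $\varepsilon$-force balance with $\partial_t u_\varepsilon-\partial_t b$ and the $\varepsilon$-damage inequality with $\partial_t^{\mathrm a}z_\varepsilon$, and combining as in the proof of Theorem \ref{theorem:weakImpliesClassical}, gives an energy identity whose Gronwall-type treatment (using $b\in W^{1,1}(0,T;W^{1,\infty})$) yields: $z_\varepsilon$ bounded in $L^\infty(0,T;W^{1,p}(\Omega))\cap SBV^2(0,T;L^2(\Omega))$ uniformly (the $p>n$ gradient term and the $\beta|\dot z|^2$ dissipation give this), and $\sqrt{g(z_\varepsilon)+\varepsilon}\,\epsilon(u_\varepsilon)$ bounded in $L^2(\Omega_T)$. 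From the monotonicity $0\ge\partial_t^{\mathrm a}z_\varepsilon$ and the uniform $W^{1,p}$-bound one extracts (Helly/Aubin–Lions type selection, compact embedding $W^{1,p}\hookrightarrow\C C(\ol\Omega)$) a limit $z$ with $z_\varepsilon(t)\to z(t)$ in $\C C(\ol\Omega)$ for a.e.\ $t$, $z_\varepsilon^-(t)\nearrow$, and then $F:=\frak A_D(\{z^->0\})$ is a shrinking set by Definition \ref{def:admissibleSubset}. On any $U\compact F(t)$ one has $g(z)\ge c_U>0$ on $U\times(0,t)$ for small times, so $\epsilon(u_\varepsilon)$ is bounded in $L^2(U\times(0,t))$; with $u_\varepsilon=b$ on $D$ and Korn's inequality \emph{on a smooth domain covering $U$} — here one needs the covering result for admissible sets announced in the introduction and proved in Section \ref{subsection:covering} — $u_\varepsilon$ is bounded in $L^2(0,t;H^1(U;\R^n))$, giving a limit $u\in L_t^2H^1_{x,\mathrm{loc}}(F;\R^n)$ with $e:=\epsilon(u)\in L^2(F;\R^{n\times n}_{\mathrm{sym}})$ and $u=b$ on $D_T\cap F$.

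Third, pass to the limit in the three evolutionary relations. For the force balance \eqref{eqn:forceBalanceWeak}, test with $\zeta\in H^1_D(\Omega;\R^n)$ supported in a fixed $U\compact F(t)$; on $U$, $W^\varepsilon_{,e}(\epsilon(u_\varepsilon),z_\varepsilon)=(g(z_\varepsilon)+\varepsilon)\mathbb C\epsilon(u_\varepsilon)\to g(z)\mathbb C\epsilon(u)$ weakly, using strong convergence of $g(z_\varepsilon)$ and weak convergence of $\epsilon(u_\varepsilon)$; a density/exhaustion argument in $U\compact F(t)$ then gives \eqref{eqn:forceBalanceWeak}. For the damage inequality \eqref{eqn:VI}, lower semicontinuity of the $p$-Dirichlet term, strong convergence of $z_\varepsilon$ and of $W^\varepsilon_{,z}$ on $F(t)$, and weak convergence of $\nabla z_\varepsilon$ and $\partial_t^{\mathrm a}z_\varepsilon$ give the inequality for test functions $\zeta\le0$; the sign conditions $z\ge0$, $\partial_t^{\mathrm a}z\le0$ pass in the limit. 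The jump condition \eqref{eqn:damageJumpCondition} follows from the definition of $F$ together with $z^+(t)=z^-(t)$ off the jump set and $z^+(t)=0$ outside $F(t)$ (exclusion). For the energy inequality \eqref{eqn:EI}, one uses $\Gamma$-convergence of $\C E_\varepsilon$ to $\C E$ along the trajectory to identify the jump contributions $\C J_s$ with the genuine energy drops, and the upper estimate \eqref{eqn:variationalEnergyIneq} comes from testing the limit force balance; the work term passes by the bound on $W^\varepsilon_{,e}$ and $\partial_t b\in L^1(0,T;W^{1,\infty})$. Finally, the Zorn's lemma / maximal-solution construction of Section \ref{section:mainProb} is applied to patch local-in-time solutions into a global one, and near cluster points $C_{z^\star}$ one enlarges $F(t)$ beyond $\frak A_D(\{z^-(t)>0\})$ by a set of measure $<\eta$ to obtain the approximate shrinking-set properties (ii) of Definition \ref{def:approxWeakSolution}.

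The main obstacle is the passage to the limit near the jump/cluster points: the energy $t\mapsto\C E_\varepsilon(e_\varepsilon(t),z_\varepsilon(t))$ can drop discontinuously when material detaches from the Dirichlet boundary, the one-sided limits $\C E(s^\pm)$ need not exist for the rough limit solution, and Korn's inequality degenerates precisely on the components being excluded. Controlling this requires the $\Gamma$-convergence estimates to pin down $\C J_s\ge0$ and the inequality \eqref{eqn:variationalEnergyIneq} without ever invoking $\C E(s^+)$ directly, together with the covering-by-smooth-domains result so that Korn's inequality survives on the admissible part $F(t)$ despite its bad boundary regularity; reconciling these with the Zorn's lemma bookkeeping (which forces the $\eta$-fineness) is the delicate part of the argument.
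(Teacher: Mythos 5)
Your overall strategy coincides with the paper's: regularize via $W^\varepsilon$, invoke the incomplete-damage existence result (Theorem \ref{theorem:regProblem}, based on \cite{HK10a}), derive $\varepsilon$-uniform bounds, pass to the degenerate limit using the Lipschitz-covering results and Korn's inequality, control the energy through the $\Gamma$-limit $\frak E$, and finish with Zorn's lemma, accepting fineness $\eta$ near cluster points of the jump set. One step, however, would fail as you describe it: you set $F:=\frak A_D(\{z^->0\})$ already in the $\varepsilon\to0^+$ limit and assert that the jump condition \eqref{eqn:damageJumpCondition} ``follows from the definition of $F$'' because ``$z^+(t)=0$ outside $F(t)$''. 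The limit $z$ of the regularized problems lies in $H^1(0,T;L^2(\Omega))$, has no jumps, and remains strictly positive on components that become isolated from $D$; it does not vanish there of its own accord. To enforce \eqref{eqn:damageJumpCondition} you must truncate $z$ to $z\mathds 1_{\frak A_D(\{z>0\})}$, and this truncation creates jumps whose contributions $\C J_s$ are not controlled by the energy inequality inherited from the $\varepsilon$-level --- exactly the pitfall spelled out in the remark following Proposition \ref{prop:existenceResultSimplified}.

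The paper therefore performs the degenerate limit \emph{without} any exclusion ($F=\{z>0\}$, Proposition \ref{prop:existenceResultSimplified}) and implements exclusions only by stopping and restarting the evolution: at a restart time the new initial datum is $z^-\mathds 1_{\frak A_D(\{z^->0\})}$ (which is in $W^{1,p}(\Omega)$ by Lemma \ref{lemma:truncation}), the value $\frak e_s^+$ is chosen as the $\Gamma$-limit reduced energy $\frak E(b(s),z^+(s))$ --- which yields \eqref{eqn:variationalEnergyIneq} via Lemma \ref{lemma:EfrakEestimate}, not by testing the limit force balance --- and the concatenation Lemma \ref{lemma:concatenation}, using lower semicontinuity of $\frak E$ and Corollary \ref{cor:gammaLimitRepresentation} (iii), shows that the energy inequality survives across the restart with $\C J_s\geq 0$. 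Zorn's lemma is then applied to the partially ordered set of approximate solutions (Lemmas \ref{lemma:Pnonempty} and \ref{lemma:PupperBound}), and it is Lemma \ref{lemma:Pnonempty} that guarantees $\C L^n\big(F(t)\setminus\frak A_D(\{z^-(t)>0\})\big)<\eta$ for a short time after a cluster point of $C_{z^\star}$. If you reorganize your second and third steps accordingly --- pass to the limit first without exclusion, truncate only at restart times, and book the jump energies through $\frak E$ --- your proposal becomes the paper's proof.
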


	\begin{tabular}{ll}
		\hspace*{-2.4em}
		\begin{minipage}{16em}
			{
				The general idea behind the global existence proof is illustrated in Figure 4.
				Starting from an initial damage profile $z^0$ at time $t_0=0$, we calculate a degenerate limit solution via Proposition \ref{prop:existenceResultSimplified}.
				Suppose that the first material exclusions occur at time $t_1$.
				We define a new initial condition $z^1$ where the excluded fragments in $z(t_1)$ are set to $0$ and we calculate a degenerate limit again.
				By repeating this procedure, we obtain
                                $t_0\leq t_1\leq t_2\leq\ldots$\, . 
				The degenerate limit functions on $(t_i,t_{i+1})$ can be concatenated to a weak solution on $(t_0,\sup_{i\in\N}t_i)$ with
				a jump term in the energy inequality at each time $t_i$.
				
				However, to avoid the problem of infinitely many material exclusions occurring in arbitrary short time intervals in the future (see Figure 3),
				''small`` material fragments described by the fineness constant will be neglected in this case.
				The extension to a global-in-time weak solution will be accomplished by Zorn's lemma.
				\vspace*{-0.7em}
				}
		\end{minipage}
		&
		\begin{minipage}{26em}
			{
				\begin{center}
					\includegraphics[width=3.9in]{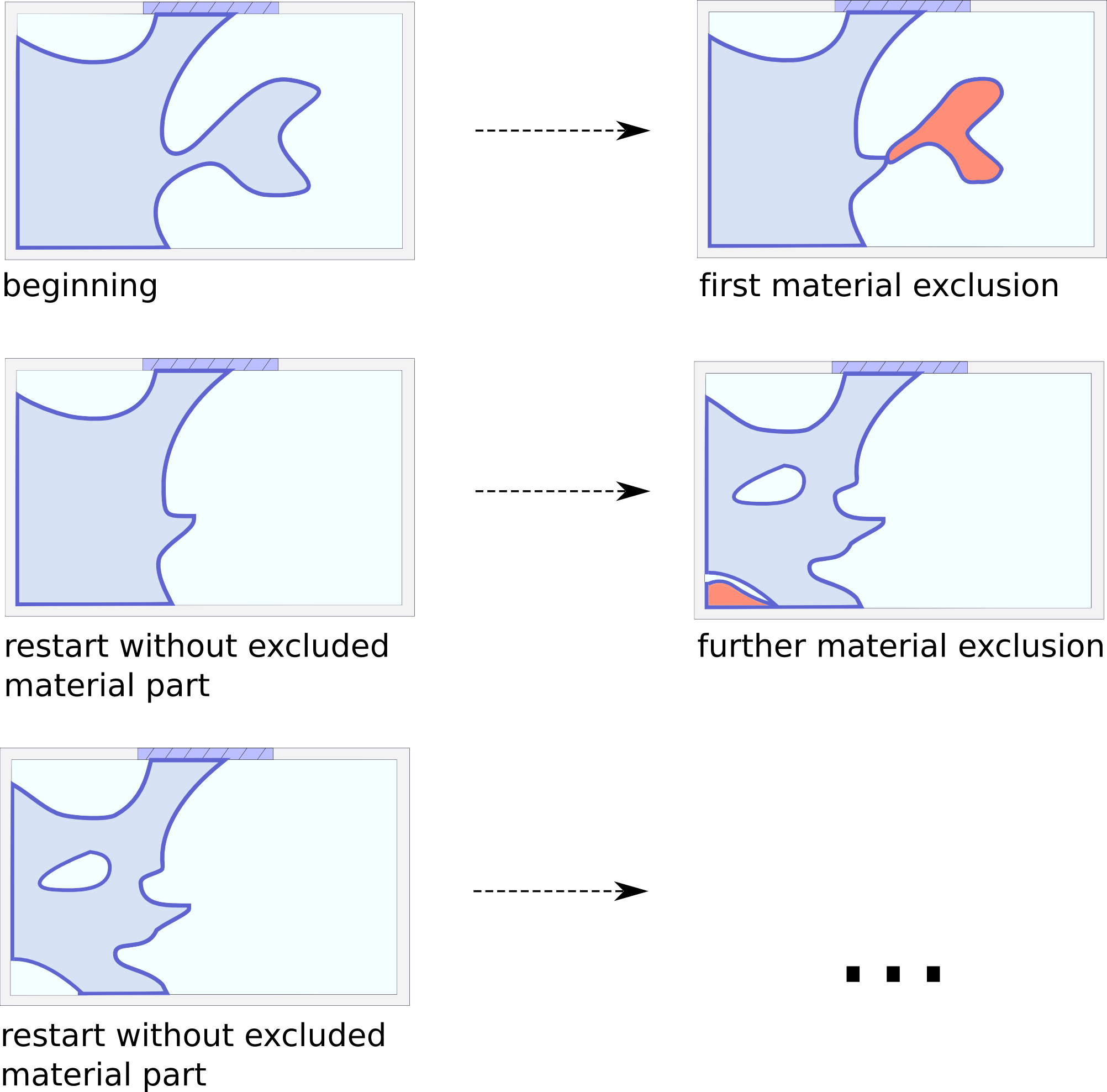}
				\end{center}
				Figure 4: \textit{Concatenation of solutions from the degenerate limit.}
			}
		\end{minipage}
	\end{tabular}

	\begin{remark}
	\label{theorem:localExistence}
		If the initial value for the damage profile contains no complete damaged parts on $\ol\Omega$, we are also able
		to establish maximal local-in-time existence of weak solutions. More precisely,
		there exist a maximal value $\widehat T>0$ with $\widehat T\leq T$ and functions $u$ and $z$ defined on the time interval $[0,\widehat T]$
		such that $(u,z)$ is a weak solution according to Definition \ref{def:weakSolution}.
		Therefore, if $\widehat T<T$, $(u,z)$ cannot be extended
		to a weak solution on $[0,\widehat T+\varepsilon]$.
	\end{remark}

\section{Proof of the main result}
\label{section:proof}
\subsection{Preliminaries}
\subsubsection{Representation and covering properties}
\label{subsection:covering}
	The aim in this subsection is to prove covering results for shrinking sets.
	\begin{definition}[Fine representation]
	\label{def:fineCover}
		Let $H\subseteq\ol\Omega$ be a relatively open subset. We call a countable family $\{U_k\}$
		of open sets $U_k\compact H$ a fine representation for $H$ if
		for every $x\in H$ there exist an open set $U\subseteq\R^n$ with $x\in U$ and a $k\in\N$ such that
		$U\cap\Omega\subseteq U_k$.
	\end{definition}
	
	\begin{remark}
		Note that $H\cap\partial\Omega$ is not covered by $\{U_k\}$. See Figure 5 for an example.
	\end{remark}
	
	\begin{lemma}
	\label{lemma:finiteCover}
		Let $G\subseteq\ol{\Omega_T}$ be a relatively open subset and the sequence $\{t_m\}$ containing $T$ be dense in $[0,T]$.
		Furthermore, let $\{U_k^m\}_{k\in\N}$ be a fine representation for $G(t_m)$ for every
		$m\in\N$.
		Then, for every compact set $K\subseteq G$ there exist a finite set $I\subseteq\N$ and values $m_k\in\N$, $k\in I$, such that
		$K\cap\Omega_T\subseteq\bigcup_{k\in I}\big(U_k^{m_k}\times(0,t_{m_k})\big)$.
	\end{lemma}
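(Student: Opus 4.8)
To prove Lemma~\ref{lemma:finiteCover} the plan is to run a compactness argument, but with a twist forced by the fact that $K\cap\Omega_T$ is the intersection of the \emph{compact} set $K$ with the \emph{open} cylinder $\Omega_T$ and hence need not be compact: one therefore cannot simply extract a finite subcover from an open covering of $K\cap\Omega_T$ by subsets of $\Omega_T$. Instead I would cover the compact set $K$ itself by open subsets of $\R^{n+1}$ which are allowed to protrude from $\Omega_T$, but each of which meets $\Omega_T$ inside one of the cylinders $U_k^m\times(0,t_m)$, and then pass to a finite subcover of $K$ and intersect with $\Omega_T$.

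The construction of these neighborhoods goes as follows. Fix $(x,t)\in K\subseteq G$. Since $G$ is relatively open in $\ol{\Omega_T}$, there is $\rho>0$ with $\big(B_\rho(x)\cap\ol\Omega\big)\times\big((t-\rho,t+\rho)\cap[0,T]\big)\subseteq G$. Using that $\{t_m\}$ is dense in $[0,T]$ and contains $T$, I would then pick a slice time $t_m$ with $0<t_m\le T$ and $|t_m-t|<\rho$, and moreover with $t<t_m$ in case $t<T$, and $t_m:=T$ in case $t=T$ (this last is possible precisely because $T$ occurs among the $t_m$). For such a $t_m$ one has $(x,t_m)\in G$, i.e.\ $x\in G(t_m)$, so Definition~\ref{def:fineCover} applied to the fine representation $\{U_k^m\}_k$ of $G(t_m)$ at the point $x$ supplies an open set $U_x\subseteq\R^n$ with $x\in U_x$ and an index $k=k(x,t)$ with $U_x\cap\Omega\subseteq U_k^m$. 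Setting $O_{(x,t)}:=U_x\times(-\infty,t_m)$ if $t<T$ and $O_{(x,t)}:=U_x\times\R$ if $t=T$, one obtains an open neighborhood of $(x,t)$ in $\R^{n+1}$ with, because $0<t_m\le T$,
$$
O_{(x,t)}\cap\Omega_T=(U_x\cap\Omega)\times(0,t_m)\subseteq U_k^m\times(0,t_m).
$$

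Now $\{O_{(x,t)}\}_{(x,t)\in K}$ is an open cover of the compact set $K$, so finitely many of them suffice, say those attached to base points $(x^{(1)},t^{(1)}),\dots,(x^{(p)},t^{(p)})$, with corresponding indices $k_i=k(x^{(i)},t^{(i)})$ and slice indices $m_i$. Intersecting the inclusion $K\subseteq\bigcup_i O_{(x^{(i)},t^{(i)})}$ with $\Omega_T$ and using the display above gives
$$
K\cap\Omega_T\ \subseteq\ \bigcup_{i=1}^{p}\big(O_{(x^{(i)},t^{(i)})}\cap\Omega_T\big)\ \subseteq\ \bigcup_{i=1}^{p}U_{k_i}^{m_i}\times(0,t_{m_i}),
$$
which is the claim, with $I$ the finite set of occurring pairs (or the set $\{k_i\}$ together with the choice $m_{k_i}:=m_i$ after discarding repetitions).

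I expect the step requiring the most care to be the handling of the ``non-interior'' points of $K$, i.e.\ those lying on $\partial\Omega\times[0,T]$ or at the end times $t=0$, $t=T$, where restricting a neighborhood to $\Omega_T$ naively fails. Two features make it work: a fine representation covers $G(t_m)$ up to $\partial\Omega$ — the neighborhood $U_x$ in Definition~\ref{def:fineCover} is a genuine $\R^n$-neighborhood of $x$ also when $x\in\partial\Omega$, so spatial boundary points are harmless; and the density of $\{t_m\}$ together with $T\in\{t_m\}$ allows one to always push the slice time $t_m$ just past $t$ (or to take $t_m=T$ when $t=T$), which is exactly what lets $O_{(x,t)}\cap\Omega_T$ sit inside a single cylinder $U_k^m\times(0,t_m)$. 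Note that no ``shrinking'' property of $G$ is needed here; only relative openness of $G$ and the two-sidedness of the time interval around $t$ are used.
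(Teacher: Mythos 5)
Your proof is correct and follows essentially the same route as the paper: construct, for each point of $K$, a neighborhood whose intersection with $\Omega_T$ lies in a single cylinder $U_k^m\times(0,t_m)$ — using density of $\{t_m\}$ to pick a slice time just past $t$ (and $t_m=T$ at $t=T$) — and then conclude by compactness of $K$. The only cosmetic difference is that you cover $K$ by open subsets of $\R^{n+1}$ while the paper uses relatively open neighborhoods in $\ol{\Omega_T}$ and Heine--Borel in the subspace topology, which is equivalent.
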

	\begin{proof}
		To every element $p=(x,t)\in K$, we will construct a neighborhood $\varTheta_p\subseteq \ol{\Omega_T}$ of $p$
		in the subspace topology of $\ol{\Omega_T}$ such that
		there exists $k,m\in\N$ with $\varTheta_p\cap\Omega_T\subseteq U_k^m\times(0,t_m)$.
		Then the claim follows by the Heine-Borel theorem.
		
		Indeed, to every $p=(x,t)\in K$ there exists an $\varepsilon>0$ such that $B_\varepsilon(p)\cap\ol{\Omega_T}\subseteq G$
		since $G\subseteq\ol{\Omega_T}$ is relatively open.
		Therefore, if $t<T$, $(x,t_m)\in G$ for all $m\in\N$ such that $t<t_m<t+\varepsilon$.
		This implies $(x,t_m)\in G\cap (\ol\Omega\times\{t_m\})=G(t_m)\times\{t_m\}$.
		Then, we find $p\in G(t_m)\times J$ with $J=[0,t_m)$.
		In the case $t=T$, it holds $p\in G(T)\times J$ with $J=[0,T]$.
		Since $\{U_k^m\}_{k\in\N}$ is a fine representation of $G(t_m)$,
		let $\delta>0$ such that $B_{\delta}(x)\cap\Omega\subseteq U_k^m$ for some $k\in\N$.
		Finally, $\varTheta_p:=(B_{\delta}(x)\cap\ol\Omega)\times J$ is the required neighborhood of $p$.
		\ep
	\end{proof}
	
	{
		\begin{center}
			\includegraphics[width=5in]{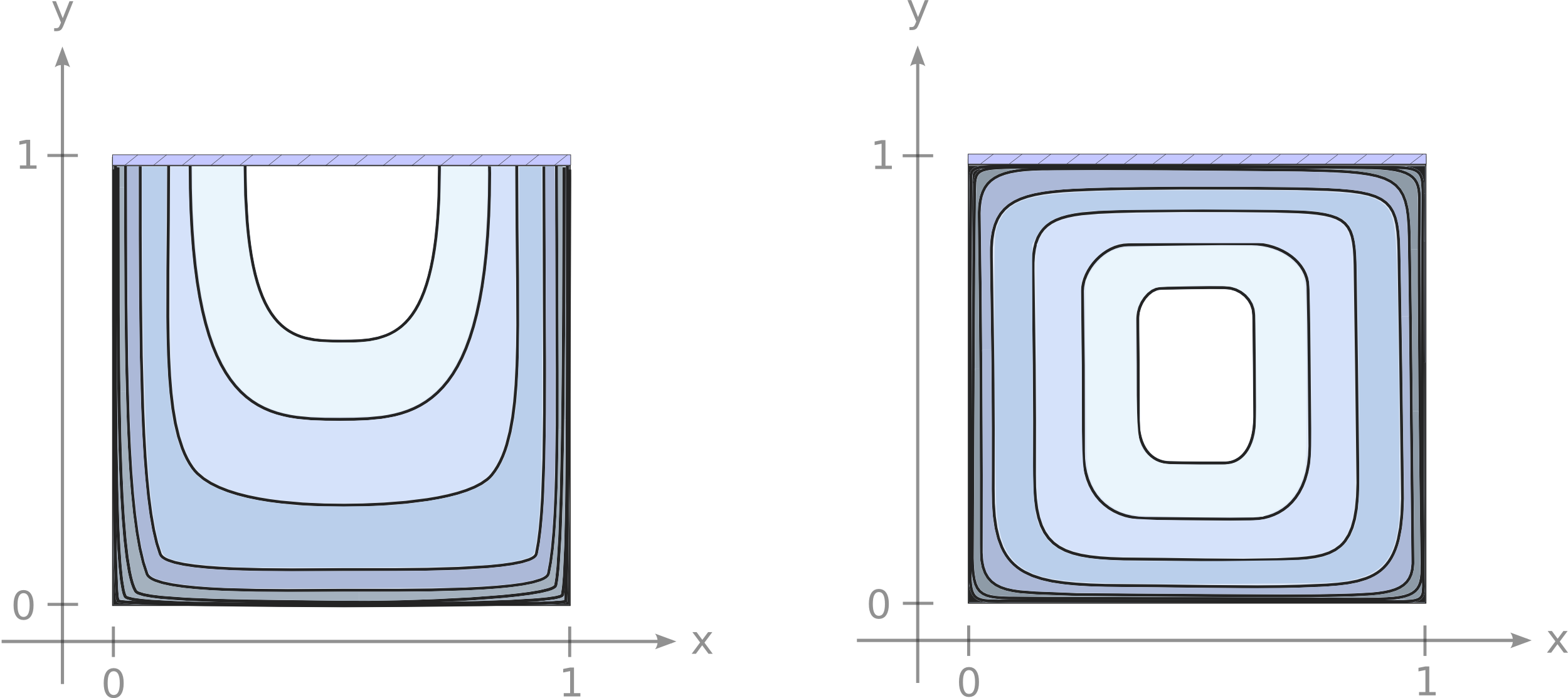}
		\end{center}
		Figure 5: \textit{
		The left illustration shows a fine representation for the relatively open subset $H=(0,1)\times(0,1]$
		of $\ol\Omega=[0,1]\times[0,1]$ whereas the right picture does not show a fine representation for $H$.}
	}

	A simple consequence of Lemma \ref{lemma:finiteCover} is
	$
		L_t^2 H_{x,\mathrm{loc}}^0(G;\R^N)=L_\mathrm{loc}^2(G;\R^N),
	$
	provided that $G\subseteq\ol{\Omega_T}$ is shrinking.
	Moreover, we can characterize the function space $L_t^2H^1_{x,\mathrm{loc}}(G;\R^N)$ as follows.
	\begin{proposition}
	\label{prop:localSob}
		Let $G\subseteq\ol{\Omega_T}$ be a shrinking subset and let $\{t_m\}$ and $\{U_k^m\}$ be as in Lemma \ref{lemma:finiteCover}.
		Furthermore, let $v:G\rightarrow\R^N$ be a function.
		\begin{enumerate}
			\item[(a)]
				The following statements are equivalent:
				\begin{enumerate}
					\item[(i)]
						$v\in L_t^2H^1_{x,\mathrm{loc}}(G;\R^N)$
					\item[(ii)]
						$v|_{U_k^m\times (0,t_{m})}\in L^{2}(0,t_m;H^{1}(U_k^m;\R^N))$ for all $k,m\in\N$
					\item[(iii)]
						$v\in L_\mathrm{loc}^2(G;\R^N)$ and there exists a function $g\in L_\mathrm{loc}^2(G;\R^{N\times n})$ such that
						\begin{align}
						\label{eqn:partIntegration}
							\int_G v\cdot\mathrm{div}(\zeta)\dxt=-\int_G g:\zeta\dxt
						\end{align}
						for all $\zeta\in\C C_\mathrm{c}^\infty(\interior(G);\R^{N\times n})$
				\end{enumerate}
				If one of these conditions is satisfied we write $\nabla v:=g$ and $\epsilon(v):=\frac 12(\nabla v+(\nabla v)^\mathrm{t})$.
			\item[(b)]
				Assume that each $U_k^m$ has a Lipschitz boundary.
				Then the following statements are equivalent:
				\begin{enumerate}
					\item[(i)]
						 $v=b$ on the boundary $D_T\cap G$
					\item[(ii)]
						for every $k,m\in\N$, condition \eqref{eqn:boundaryValueCondition} is satisfied for $U=U_k^m$ and $t=t_m$
				\end{enumerate}
		\end{enumerate}
	\end{proposition}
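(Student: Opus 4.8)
The plan is to prove part (a) by a cyclic chain of implications (i)$\Rightarrow$(ii)$\Rightarrow$(iii)$\Rightarrow$(i), and then to deduce (b) similarly. The implication (i)$\Rightarrow$(ii) is essentially a definition chase: by Lemma \ref{lemma:finiteCover}, each $\overline{U_k^m}$ is a compact subset of $G$ contained in $G(t_m)$, so $U_k^m\compact G(t_m)$ and the definition \eqref{eqn:localSobolevSpace} directly yields $v|_{U_k^m\times(0,t_m)}\in L^2(0,t_m;H^1(U_k^m;\R^N))$. For (ii)$\Rightarrow$(iii), the point is that the cylinders $U_k^m\times(0,t_m)$ form an open cover of $\interior(G)\cap\Omega_T$ (using that $G$ is shrinking, so that if $(x,t)\in\interior(G)$ then $x$ lies in $G(t_m)$ for suitable $t_m>t$ close to $t$, and then in some $U_k^m$ by the fine-representation property). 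On each cylinder the Sobolev regularity gives a weak spatial gradient; these local gradients agree on overlaps by uniqueness of weak derivatives, so they glue to a single $g\in L^2_\mathrm{loc}(G;\R^{N\times n})$, and \eqref{eqn:partIntegration} follows by a partition-of-unity argument subordinate to the cover, testing against $\zeta\in\C C_\mathrm c^\infty(\interior(G))$. For (iii)$\Rightarrow$(i), given an arbitrary $t\in(0,T]$ and open $U\compact G(t)$, one covers the compact set $\overline U\times[0,t']$ (for $t'<t$) by finitely many cylinders on which $v$ is locally $L^2H^1$; restricting \eqref{eqn:partIntegration} to this region identifies the weak spatial gradient of $v|_{U\times(0,t')}$ with $g$, which is $L^2$ there, so $v|_{U\times(0,t')}\in L^2(0,t';H^1(U;\R^N))$, and letting $t'\uparrow t$ gives the claim.

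For part (b), under the additional Lipschitz assumption the implication (i)$\Rightarrow$(ii) is immediate from the definition \eqref{eqn:boundaryValueCondition} of the boundary condition, applied with the particular choice $U=U_k^m$, $t=t_m$. The converse (ii)$\Rightarrow$(i) is the substantive direction: given arbitrary $t\in(0,T)$ and an open Lipschitz set $U\compact G(t)$, one must show $\widetilde v(s)=b(s)$ on $\partial U\cap D$ in the trace sense. The idea is to exploit that $\partial U\cap D$ is covered — up to the portion sitting on $\partial\Omega$, which is irrelevant for the trace onto $D$ — by the sets $U_k^m$: any $x\in\partial U\cap D\subseteq G(t)\subseteq G(t_m)$ lies in some $U_k^m$ by the fine-representation property, so near $x$ the trace of $v|_{U_k^m\times(0,t_m)}$ onto $\partial U_k^m\cap D$ equals $b$ by hypothesis. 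One then needs that traces are local: the trace of $\widetilde v$ onto $\partial U\cap D$ near $x$ is determined by the values of $v$ in a neighborhood of $x$ inside $U\cap U_k^m$, and since both $U$ and $U_k^m$ are Lipschitz and $D\subseteq\partial\Omega$ is a fixed Lipschitz piece, the traces of $v|_U$ and $v|_{U_k^m}$ onto the common boundary portion $\partial U\cap D$ near $x$ agree. A covering-and-partition argument then patches these local identities into $\widetilde v(s)=b(s)$ a.e.\ on $\partial U\cap D$ for a.e.\ $s$.

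The main obstacle I expect is the locality-of-traces step in (b)(ii)$\Rightarrow$(i): matching the trace of $v$ taken from within the ``test'' domain $U$ with the trace taken from within the covering domain $U_k^m$ along their common Dirichlet-boundary overlap, since these are a priori two different one-sided traces onto a piece of $D$. The clean way to handle this is to observe that $\partial U\cap D$ and $\partial U_k^m\cap D$ lie in the fixed hypersurface $\partial\Omega$, on which the exterior normal direction is intrinsic, so the trace of any $H^1$ function defined on a Lipschitz neighborhood in $\overline\Omega$ onto a relatively open piece of $D$ is well-defined independently of which admissible Lipschitz subdomain one approaches from; one then invokes the standard localization of the trace operator (the trace at $x\in D$ depends only on the germ of $v$ at $x$) to conclude. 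The remaining ingredients — gluing weak gradients, partitions of unity subordinate to the cylinder cover, and the exhaustion $t'\uparrow t$ — are routine once the cover from Lemma \ref{lemma:finiteCover} is in hand.
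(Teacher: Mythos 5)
Your proposal is correct and takes essentially the same route as the paper: the only substantive direction in (a), namely (ii)$\Rightarrow$(iii), is handled exactly as in the paper by gluing the weak gradients over the cylinder cover $U_k^m\times(0,t_m)$ of $G\cap\Omega_T$ from Lemma \ref{lemma:finiteCover} and a partition of unity, the remaining implications being immediate (your extra covering and exhaustion $t'\uparrow t$ in (iii)$\Rightarrow$(i) are superfluous, since $\ol U\times[0,t]\subseteq G$ already gives $v,g\in L^2(U\times(0,t))$), and (b) is reduced, as in the paper, to the finite cover of $U$ by sets $U_k^{m_k}$ with $t_{m_k}\geq t$. Your locality-of-traces discussion in (b)(ii)$\Rightarrow$(i) in fact supplies the detail that the paper compresses into ``the claim follows.''
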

	\begin{proof}
		\begin{enumerate}
			\item[(a)]
				(i)$\Longrightarrow$(ii) and (iii)$\Longrightarrow$(i) are trivial.
				
				(ii)$\Longrightarrow$(iii):
				Let the function $\widehat g:G\rightarrow \R^{N\times n}$ be $\C L^{n+1}$-a.e. defined as follows.
				For each $k,m\in\N$, we set $\widehat g|_{U_k^m}:=\widehat g_k^m$ where
				$\widehat g_k^m\in L^2(U_k^m\times(0,t_m);\R^{N\times n})$
				is the weak derivative of $v|_{U_k^m\times(0,t_m)}$.
				The function $\widehat g$ is well-defined on $G\cap\Omega_T$ since
				$$
					G\cap \Omega_T=\bigcup_{k,m\in\N}U_k^m\times(0,t_m)
				$$
				and $\widehat g_{k_1}^{m_1}=\widehat g_{k_2}^{m_2}$ on
				$U_{k_1}^{m_1}\times(0,t_{m_1})\cap U_{k_2}^{m_2}\times(0,t_{m_2})$
				for all $k_1,k_2,m_1,m_2\in\N$ in an $\C L^{n+1}$-a.e. sense.
				Let $t\in(0,T]$ and $U\compact G(t)$ be open.
				By Lemma \ref{lemma:finiteCover}, $U\times(0,t)$ can be covered by finitely many sets
				$U_k^{m}\times(0,t_{m})$.
				In particular, $\widehat g|_{U\times(0,t)}\in L^2(0,t;L^2(U;\R^{N\times n}))$.
				Thus $\widehat g\in L_\mathrm{loc}^{2}(G;\R^{N\times n}))$.
				
				Let $\zeta\in\C C_\mathrm{c}^\infty(\interior(G);\R^{N\times n})$.
				Applying Lemma \ref{lemma:finiteCover} again, there exists a finite set $I\subseteq\N$ such that
				$\mathrm{supp}(\zeta)\subseteq \bigcup_{k\in I}U_k^{m_k}\times(0,t_{m_k})=:U$.
				By a partition of unity argument over $U$, \eqref{eqn:partIntegration} holds
				for $g=\widehat g$.
			\item[(b)]
				(ii)$\Longrightarrow$(i):
				Let $t\in(0,T)$ and $U\compact G(t)$ be an arbitrary open subset.
				By Lemma \ref{lemma:finiteCover}, we find a finite set $I\subseteq\N$
				such that $U\subseteq \bigcup_{k\in I}U_k^{m_k}$ and $t_{m_k}\geq t$.
				The claim follows.
				\ep
		\end{enumerate}
	\end{proof}
	If a relatively open set $H\subseteq\ol{\Omega}$ is admissible with respect to $D$
	we can construct a fine representation for $H$ with Lipschitz domains in the following sense.
	\begin{lemma}[Lipschitz representation of admissible sets]
	\label{lemma:LipschitzDomain}
		Let $H\subseteq\ol\Omega$ be relatively open and admissible with respect to $D$.
		Then, there exists a fine representation $\{U_m\}$ for $H$ such that
		\begin{enumerate}
			\item[(i)]
				$U_{m}$ is a Lipschitz domain for all $m\in N$,
			\item[(ii)]
				$\C H^{n-1}(\partial U_{m}\cap D)>0$ for all $m\in N$.
		\end{enumerate}
	\end{lemma}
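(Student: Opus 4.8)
The plan is to realise the sets $U_m$ as connected Lipschitz subdomains of $\Omega$ that fine-cover $H$ in the sense of Definition~\ref{def:fineCover} and that each reach $D$ through one fixed ``anchor'' sitting in a nice boundary chart. Since $\Omega$ is a bounded Lipschitz domain, $\overline\Omega$ is locally path-connected, so the path-components of the relatively open set $H$ are relatively open, and, $\overline\Omega$ being second countable, there are at most countably many of them. Because a countable union of fine representations is again a fine representation, it suffices to treat one path-component; hence I would assume that $H$ is path-connected, and then admissibility (Definition~\ref{def:admissibleSubset}) gives $\mathcal H^{n-1}(H\cap D)>0$.

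First I would fix the anchor. Using that $\partial\Omega$ is Lipschitz (hence Ahlfors regular) and $\mathcal H^{n-1}(H\cap D)>0$, choose $x_0\in H\cap D$ which is a density point of $H\cap D$ with respect to $\mathcal H^{n-1}\lfloor\partial\Omega$ and at which the boundary can be flattened by a bi-Lipschitz chart $\Phi$ (almost every point of $H\cap D$ qualifies). Since $H$ is relatively open, for sufficiently small $r,h>0$ the set $V_*:=\Phi^{-1}\big((-r,r)^{n-1}\times(0,h)\big)$ is a Lipschitz domain with $V_*\subseteq\Omega$ and $\overline{V_*}\subseteq H$, and, because $x_0$ has full density in $H\cap D$, its bottom face satisfies $\mathcal H^{n-1}(\partial V_*\cap D)>0$. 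This $V_*$ will be attached to every member of the family.

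Next I would build a ``raw'' countable fine representation $\{W_j\}$ of $H$ by Lipschitz domains $W_j\compact H$ with $W_j\subseteq\Omega$, ignoring for the moment whether they meet $D$: for the interior take all balls with rational centre and radius whose closure lies in $H$ (these fine-cover $H\cap\Omega$), and for the boundary points use the Lipschitz structure of $\partial\Omega$ together with second countability to select countably many boundary boxes $\Phi_\iota^{-1}\big((-r,r)^{n-1}\times(0,h)\big)$ with closures in $H$ whose bottom faces exhaust $H\cap\partial\Omega$ and which fine-cover the corresponding boundary points (exactly as for $V_*$). Then, for each $j$, pick $y_j\in W_j\cap\Omega$, join it to $V_*$ by a path in the path-connected set $H$, cover that compact path by a finite chain $C_j$ of balls (and boundary half-balls) with closures in $H$, and let $U_j$ be a connected Lipschitz domain with $W_j\cup C_j\cup V_*\subseteq U_j$, $\overline{U_j}\subseteq H$ and $U_j\subseteq\Omega$, obtained by slightly perturbing the radii and sizes of the constituents so that any two of them are either disjoint or meet transversally. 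Keeping $C_j$ away from $\partial\Omega$ except near $V_*$ one obtains $\partial U_j\cap D=\partial V_*\cap D$ up to an $\mathcal H^{n-1}$-null set, so $\mathcal H^{n-1}(\partial U_j\cap D)>0$; and $W_j\subseteq U_j$ forces $\{U_j\}$ to inherit the fine-covering property of $\{W_j\}$. Collecting the $U_j$ over all $j$ and over all path-components of the original $H$ gives the asserted family.

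The hard part will be the claim that such a finite union $W_j\cup C_j\cup V_*$ can actually be arranged to be a genuine connected Lipschitz domain: this rests on the ``general position'' fact that a finite union of balls and boundary boxes meeting pairwise transversally has Lipschitz boundary, together with the extra care needed where the constituents abut $\partial\Omega$, where $\partial U_j$ is pieced together from the Lipschitz graph of $\partial\Omega$ and the transversal side faces and must be checked to remain Lipschitz after flattening. The connectedness requirement is precisely what forces the detour through the chain $C_j$ instead of simply taking $W_j\cup V_*$; everything else is routine bookkeeping with density points, countability, and the definition of a fine representation.
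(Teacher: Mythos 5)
Your proposal is correct in outline and reaches the statement at the same level of rigor as the paper's own sketch, but the construction mechanism is genuinely different. What you share with the paper: reduction to a path-connected component, the choice of an anchor on $D\cap H$ around which every relative neighborhood of $\partial\Omega$ meets $D$ in positive $\C H^{n-1}$-measure, and the observation that once a member of the family contains a fixed piece of $\Omega$ whose closure meets $\partial\Omega$ near that anchor, property (ii) is automatic (indeed, since $U_m\subseteq\Omega$, any portion of $\partial\Omega$ contained in $\ol{U_m}$ lies in $\partial U_m$ — which, incidentally, makes your extra precaution of keeping the chains $C_j$ away from $\partial\Omega$ unnecessary for (ii); it only matters for the Lipschitz property). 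Where you diverge: the paper exhausts $H$ from inside by the sets $D_m=H\setminus\ol{B_{1/m}(\ol\Omega\setminus H)}$, takes the path-component $D_m'$ of the anchor $x_0$, and encloses each $D_m'$ in a single Lipschitz domain $U_m$ with $D_m'\subseteq\ol{U_m}\subseteq H$ (boundary built ``by polygons''), so the fine-covering property comes from the fact that every point of $H$, together with a path to $x_0$, eventually lies in $D_m'$; you instead start from a countable atlas of elementary Lipschitz pieces (rational balls and boundary boxes over the Lipschitz graph) that already fine-covers $H$, and connect each piece to a fixed anchor box $V_*$ by a chain of balls, invoking a general-position/transversality argument to make the finite union a connected Lipschitz domain. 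The trade-off is where the unproved geometric work sits: the paper's single enclosure step versus your ``finite union of balls and boundary boxes in general position is Lipschitz'' lemma. The latter is plausible (generic perturbation avoids tangencies, and at a transversal multiple point the normals of the spheres through it admit a common good direction, so the union is locally the hypograph of a maximum of Lipschitz functions), but it is the delicate point of your route, particularly where pieces abut $\partial\Omega$: there $\partial\Omega$ is only Lipschitz, normals need not exist, and ``transversal'' has to be replaced by a common-graph-direction argument in the flattening coordinates — roughly the same kind of care the paper silently assumes where its polygonal boundary meets $\partial\Omega$. With that caveat made explicit, your argument goes through and delivers both (i) and (ii) as well as the fine-representation property (each $U_j$ contains the corresponding raw piece $W_j$, so the covering is inherited).
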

	\begin{proof}
		We will sketch a possible construction for reader's convenience.
		
		We assume WLOG that $H$ is path-connected because $H$ can only have at most countably many
		path-connected components and for each component we can apply the construction below.
		
		Let us choose a reference point $x_0\in D\cap H$
		with the property
		\begin{align}
		\label{eqn:boundaryGammaCondition}
			\C H^{n-1}\Big(\partial\big(B_\varepsilon(x_0)\cap\Omega\big)\cap D\Big)>0\text{ for all }\varepsilon>0,
		\end{align}
		which is possible since $\C H^{n-1}(D\cap H)>0$.
		The relatively open subset $D_m\subseteq\ol\Omega$ for $m\in\N$ is defined as
		$$
			D_m:=H\setminus \ol{B_{1/m}(\ol\Omega\setminus H)}.
		$$
		If $m$ is large enough we have $x_0\in D_m$ since $H\subseteq\ol\Omega$ is relatively open. We define
		\begin{align*}
			D'_m:=\{x\in D_m\,|\,&x\text{ is path-connected to }x_0\text{ in }D_m\}.
		\end{align*}
		Hence, we obtain an $\varepsilon>0$ such that $B_\varepsilon(x_0)\cap\ol\Omega\subseteq D'_m$ since
		$D'_m$ is relatively open in $\ol\Omega$.
		In combination with \eqref{eqn:boundaryGammaCondition}, this yields $\C H^{n-1}(\partial D'_m\cap D)>0$.
		Because of $D'_m\compact H$, there exists a Lipschitz domain $U_m\subseteq\Omega$ with $D'_m\subseteq \ol{U_m}\subseteq H$
		(e.g. the part of the boundary $\partial U_m\setminus \partial\Omega$ of $U_m$ can be constructed by polygons
		such that $\partial U_m$ fulfills the Lipschitz boundary condition).
		The family $\{U_m\}$ satisfies all the desired properties.
		\ep
	\end{proof}
	\begin{corollary}
	\label{cor:representationFormula}
		Let $G\subseteq\ol{\Omega_T}$ be a shrinking set where $G(t)$ is admissible with respect to $D$ for all $t\in[0,T]$.
		Furthermore, let $\{t_m\}\subseteq[0,T]$ be a dense sequence containing $T$.
		 
		Then, there exists a countable family $\{U_k^{m}\}_{k\in\N}$
		of Lipschitz domains $U_k^m\compact G(t_m)$ for each $m\in\N$ such that
		\begin{enumerate}
			\item[(i)]
				$\C H^{n-1}(\partial U_k^m\cap D)>0$ for all $m\in\N$,
			\item[(ii)]
				$\{U_k^m\}_{k\in\N}$ is a fine representation for $G(t_m)$ for all $m\in\N$,
			\item[(iii)]
				$G=\bigcup_{m=1}^\infty G(t_m)\times [0,t_m]$.
		\end{enumerate}
	\end{corollary}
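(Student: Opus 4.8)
The plan is to read off (i) and (ii) directly from a slicewise application of Lemma \ref{lemma:LipschitzDomain}, and to establish the decomposition (iii) by an elementary topological argument which is a stripped-down version of the one used in the proof of Lemma \ref{lemma:finiteCover}.

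For (i) and (ii), fix $m\in\N$. The first thing I would observe is that, since $G\subseteq\ol{\Omega_T}=\ol\Omega\times[0,T]$ is relatively open, the time slice $G(t_m)$ — identified through $G(t_m)\times\{t_m\}=G\cap(\ol\Omega\times\{t_m\})$ — is a relatively open subset of $\ol\Omega$. By hypothesis $G(t_m)$ is admissible with respect to $D$, so Lemma \ref{lemma:LipschitzDomain} applies and yields a fine representation $\{U_k^m\}_{k\in\N}$ for $G(t_m)$ consisting of Lipschitz domains $U_k^m\compact G(t_m)$ with $\C H^{n-1}(\partial U_k^m\cap D)>0$. Letting $m$ range over $\N$ produces the countable family $\{U_k^m\}_{k,m\in\N}$ and establishes (i) and (ii) at once.

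For (iii) I would verify both inclusions. The inclusion $G\supseteq\bigcup_{m}G(t_m)\times[0,t_m]$ is the easy direction: if $(x,t)\in G(t_m)\times[0,t_m]$ then $x\in G(t_m)$ and $0\le t\le t_m\le T$, so $x\in G(t)$ because $G$ is shrinking (hence $G(t_m)\subseteq G(t)$), i.e. $(x,t)\in G$. For the reverse inclusion, given $(x,t)\in G$ I would use relative openness of $G$ in $\ol{\Omega_T}$ to pick $\varepsilon>0$ with $B_\varepsilon((x,t))\cap\ol{\Omega_T}\subseteq G$; then, using that $\{t_m\}$ is dense in $[0,T]$ and contains $T$, I would select an index $m$ with $t\le t_m<t+\varepsilon$ (taking $t_m=T$ when $t=T$, or when $t+\varepsilon>T$). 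This gives $(x,t_m)\in G$, that is $x\in G(t_m)$, together with $t\in[0,t_m]$, whence $(x,t)\in G(t_m)\times[0,t_m]$.

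The corollary is essentially bookkeeping built on top of Lemma \ref{lemma:LipschitzDomain}, so I do not anticipate a genuine obstacle. The one step that needs a little care is the choice of $t_m$ in the ``$\subseteq$'' part of (iii): one must keep $t_m$ within $\varepsilon$ of $t$ while also keeping it on the correct side, $t_m\ge t$, which is precisely where density of $\{t_m\}$ together with $T\in\{t_m\}$ enters — the same mechanism already exploited in the proof of Lemma \ref{lemma:finiteCover}.
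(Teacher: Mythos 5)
Your proof is correct and follows exactly the route the paper intends: the paper states this corollary without proof as an immediate consequence of Lemma \ref{lemma:LipschitzDomain} applied slicewise (using that each slice $G(t_m)$ is relatively open and admissible), with (iii) handled by the same openness/density/shrinking argument already used in Lemma \ref{lemma:finiteCover}. No gaps; the care you take with the choice of $t_m$ near $t=T$ is exactly the right detail.
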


\subsubsection{$\Gamma$-limit of the regularized energy}
	The construction of the values $\frak e_s^+$ in \eqref{eqn:EI} satisfying the lower energy bound
	\eqref{eqn:variationalEnergyIneq} is based on $\Gamma$-convergence techniques which will be introduced below.
	We refer to \cite{MRS08, BMR09, Ser11} for the utilization of $\Gamma$-convergence
        in the context with rate-independent models and gradient flows.
	
	\begin{definition}[$\Gamma$-limit of the $\varepsilon$-regularized reduced energy]
	\label{def:GammaLimit}
	\hspace{1em}\\
		Let $\frak E_\varepsilon:H^1(\Omega;\R^n)\times W_\mathrm{w}^{1,p}(\Omega)\rightarrow\R_\infty$
		be for $\varepsilon\geq0$ the (regularized) reduced free energy defined by
		\begin{align*}
			\frak E_\varepsilon(\xi,z):=
			\begin{cases}
				\inf_{\zeta\in H_D^1(\Omega;\R^n)}\C E_\varepsilon(\epsilon(\xi+\zeta),z)&\text{if }0\leq z\leq 1,\\
				\infty&\text{else}.
			\end{cases}
		\end{align*}
		Then, we denote by $\frak E$ the $\Gamma$-limit of $\frak E_\varepsilon$ as $\varepsilon\rightarrow 0^+$
		with respect to the topology in $H^1(\Omega;\R^n)\times W_\mathrm{w}^{1,p}(\Omega)$.
		Here, $W_\mathrm{w}^{1,p}(\Omega)$ denotes the space $W^{1,p}(\Omega)$ with its weak topology.
	\end{definition}
	\begin{remark}
		The existence of the $\Gamma$-limit above is ensured because $\{\frak E_\varepsilon\}$ is non-negative and monotonically decreasing as
		$\varepsilon\rightarrow 0^+$.
		Furthermore, $\frak E$ is the lower semi-continuous envelope of $\frak E_0$ in the $H^1(\Omega;\R^n)\times W_\mathrm{w}^{1,p}(\Omega)$ topology
		(see \cite{Braides02}).
	\end{remark}
	To prove properties of the $\Gamma$-limit $\frak E$ which are needed in Section \ref{section:mainProb}, we will establish explicit
	recovery sequences.
	The proof relies on a substitution which is introduced in the following.
	
	Assume that $u\in H^1(\Omega;\R^n)$ minimizes $\C F_\varepsilon(\epsilon(\cdot),z)$ with Dirichlet data $\xi$ on $D$.
	Then, by expressing the elastic energy density $W^\varepsilon$ in terms of its derivative $W_{,e}^\varepsilon$, i.e.
	$W^\varepsilon=\frac 12 W_{,e}^\varepsilon:e$, and
	by testing the Euler-Lagrange equation with $\zeta=u-\widetilde u$
	for a function $\widetilde u\in H^1(\Omega;\R^n)$ with $\widetilde u=\xi$ on $D$, the elastic energy term in
	$\C E_\varepsilon$ can be rewritten as
	\begin{align}
	\label{eqn:weakEnergyTransform}
		&\int_\Omega W^\varepsilon(\epsilon(u),z)\dx
		=\int_\Omega \frac 12(g(z)+\varepsilon)\mathbb C\epsilon(u):\epsilon(\widetilde u)\dx.
	\end{align}

	\begin{lemma}
	\label{lemma:recoverySequence}
		For every $\xi\in H^1(\Omega)$ and $z\in W^{1,p}(\Omega)$
		there exists a sequence $\delta_\varepsilon\rightarrow 0^+$ such that
		$(\xi,(z-\delta_\varepsilon)^+)\rightarrow (\xi,z)$ is a recovery sequence for $\frak F_\varepsilon\xrightarrow{\Gamma}\frak F$
		as $\varepsilon\rightarrow0^+$ where $\frak F$ is the $\Gamma$-limit of
		$\frak F_\varepsilon:H^1(\Omega;\R^n)\times W_\mathrm{w}^{1,p}(\Omega)\rightarrow\R_\infty$
		given by
		$$
			\frak F_\varepsilon(\xi,z):=
			\begin{cases}
				\displaystyle\min_{\zeta\in H_D^1(\Omega;\R^n)}\C F_\varepsilon(\epsilon(\xi+\zeta),z)
				&\text{if }0\leq z\leq 1,\\
				\infty&\text{else.}
			\end{cases}
		$$
		with
		$$
			\C F_\varepsilon(e,z):=\int_{\Omega} W^\varepsilon(e,z)\dx.
		$$
		in the $H^1(\Omega;\R^n)\times W_\mathrm{w}^{1,p}(\Omega)$ topology.
	\end{lemma}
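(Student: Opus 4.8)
\emph{Proof plan.} First I would dispose of the trivial cases: if $0\le z\le1$ does not hold then $\frak F_\varepsilon(\xi,z)=\infty$ for all $\varepsilon\ge0$, and if $\frak F(\xi,z)=\infty$ there is nothing to prove, so assume $0\le z\le1$ and $\frak F(\xi,z)<\infty$. The convergence $(\xi,(z-\delta)^+)\to(\xi,z)$ as $\delta\to0^+$ is elementary, in fact strong in $H^1(\Omega;\R^n)\times W^{1,p}(\Omega)$ (hence a fortiori in $H^1(\Omega;\R^n)\times W_\mathrm{w}^{1,p}(\Omega)$): $(z-\delta)^+$ is the $1$-Lipschitz truncation of $z$ vanishing at $0$, so $\nabla(z-\delta)^+=\mathds1_{\{z>\delta\}}\nabla z\to\mathds1_{\{z>0\}}\nabla z=\nabla z$ in $L^p(\Omega)$ by dominated convergence (using $\nabla z=0$ $\C L^n$-a.e.\ on $\{z=0\}$) and $(z-\delta)^+\to z$ uniformly. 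Since $\frak F$ is the $\Gamma$-limit of $\{\frak F_\varepsilon\}$, the bound $\liminf_{\varepsilon\to0^+}\frak F_\varepsilon(\xi,(z-\delta_\varepsilon)^+)\ge\frak F(\xi,z)$ holds for \emph{any} $\delta_\varepsilon\to0^+$, so the statement reduces to producing $\delta_\varepsilon\to0^+$ with $\limsup_{\varepsilon\to0^+}\frak F_\varepsilon(\xi,(z-\delta_\varepsilon)^+)\le\frak F(\xi,z)$ --- and any $\delta_\varepsilon\to0^+$ will in fact work.

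The heart of the matter is to pass to the limit in the elastic energy by means of the substitution \eqref{eqn:weakEnergyTransform}. Fix $\delta_\varepsilon\to0^+$, write $w_\varepsilon:=(z-\delta_\varepsilon)^+$, let $u_\varepsilon\in H^1(\Omega;\R^n)$ be the minimizer of $\C F_\varepsilon(\epsilon(\,\cdot\,),w_\varepsilon)$ subject to $u_\varepsilon=\xi$ on $D$ (unique for $\varepsilon>0$ by strict convexity, coercivity, and Korn's inequality), and set the stress $\sigma_\varepsilon:=(g(w_\varepsilon)+\varepsilon)\mathbb C\epsilon(u_\varepsilon)$. Testing the competitor $\xi$ gives the uniform bound $2\frak F_\varepsilon(\xi,w_\varepsilon)=\int_\Omega\mathbb C^{-1}\sigma_\varepsilon{:}\sigma_\varepsilon\,(g(w_\varepsilon)+\varepsilon)^{-1}\dx\le C$. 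Since the weight equals $\varepsilon$ on $\{z=0\}$, is $\le2g(a)$ on $\{0<z<a\}$ for small $\varepsilon$, and is bounded away from $0$ on every $\{z\ge a\}$, one gets $\sigma_\varepsilon\to0$ in $L^2(\{z=0\};\R_\mathrm{sym}^{n\times n})$, $\|\sigma_\varepsilon\|_{L^2(\{0<z<a\})}^2\le C\,g(a)$ uniformly in $\varepsilon$, and $\sigma_\varepsilon$ bounded in $L^2_\mathrm{loc}(\{z>0\})$; hence, along a subsequence realizing $\limsup_{\varepsilon\to0^+}\frak F_\varepsilon(\xi,w_\varepsilon)=:\ell$, $\sigma_\varepsilon\weaklim\sigma$ in $L^2(\Omega;\R_\mathrm{sym}^{n\times n})$ with $\sigma=0$ on $\{z=0\}$, $\mathrm{div}\,\sigma=0$ and vanishing normal stress on $\partial\Omega\setminus D$, and $\int_{\{z>0\}}\mathbb C^{-1}\sigma{:}\sigma\,g(z)^{-1}\dx\le2\ell$ by weak lower semicontinuity. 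Now \eqref{eqn:weakEnergyTransform} reads $2\frak F_\varepsilon(\xi,w_\varepsilon)=\int_\Omega\sigma_\varepsilon{:}\epsilon(\widetilde u)\dx$ for every $\widetilde u\in H^1(\Omega;\R^n)$ with $\widetilde u=\xi$ on $D$; using the weighted decay of $\sigma_\varepsilon$ near the free boundary $\partial\{z>0\}$ together with an integration-by-parts exhaustion of the components of $\{z>0\}$ by the Lipschitz pieces of Lemma \ref{lemma:LipschitzDomain} (the components isolated from $D$ contributing nothing, since $\sigma$ vanishes on $\{z=0\}$ and its normal part on $\partial\Omega\setminus D$), one may let $\varepsilon\to0^+$ and upgrade this to $2\ell=\int_{\{z>0\}}\sigma{:}\epsilon(\widetilde u)\dx$ for \emph{every} local field $\widetilde u\in H^1_\mathrm{loc}(\{z>0\};\R^n)$ with $\widetilde u=\xi$ on $D\cap\{z>0\}$ and $\int_{\{z>0\}}g(z)\mathbb C\epsilon(\widetilde u){:}\epsilon(\widetilde u)\dx<\infty$. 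A Cauchy--Schwarz estimate with weight $g(z)$ then gives $\ell\le\tfrac12\int_{\{z>0\}}g(z)\mathbb C\epsilon(\widetilde u){:}\epsilon(\widetilde u)\dx$ for all such $\widetilde u$, so, taking the infimum, $\limsup_{\varepsilon\to0^+}\frak F_\varepsilon(\xi,(z-\delta_\varepsilon)^+)\le\frak F_0^{\mathrm{loc}}(\xi,z)$, where $\frak F_0^{\mathrm{loc}}(\xi,z)$ denotes the \emph{local} reduced energy, i.e.\ the infimum of $\frac12\int_{\{z>0\}}g(z)\mathbb C\epsilon(\widetilde u){:}\epsilon(\widetilde u)\dx$ over local Sobolev fields $\widetilde u$ as above.

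It remains to show $\frak F_0^{\mathrm{loc}}(\xi,z)=\frak F(\xi,z)$. The inequality ``$\ge$'' is the previous step combined with the $\Gamma$-$\liminf$ inequality. For ``$\le$'' I would use that, $\{\frak F_\varepsilon\}$ being non-negative and monotone in $\varepsilon$, the $\Gamma$-limit $\frak F$ is the lower semicontinuous envelope of the pointwise limit $\frak F_0$ (the Remark after Definition \ref{def:GammaLimit} applies verbatim): pick $\xi_k\to\xi$ in $H^1(\Omega;\R^n)$ and $z_k\weaklim z$ in $W^{1,p}(\Omega)$ with $0\le z_k\le1$ and $\frak F_0(\xi_k,z_k)\to\frak F(\xi,z)$, so $z_k\to z$ uniformly by the compact embedding $W^{1,p}(\Omega)\hookrightarrow\C C(\ol\Omega)$, together with near-minimizers $u_k=\xi_k+\zeta_k$, $\zeta_k\in H^1_D(\Omega;\R^n)$. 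On every path-connected component $P$ of the relatively open set $\{z>0\}$ with $\C H^{n-1}(P\cap D)>0$ --- which is admissible with respect to $D$ --- Lemma \ref{lemma:LipschitzDomain} provides a fine representation of $P$ by Lipschitz domains $U\compact P$ with $\C H^{n-1}(\partial U\cap D)>0$; on each $U$ the weight $g(z_k)$ is bounded below for large $k$, so by Korn's inequality on the Lipschitz set $U$ (using that $\zeta_k=u_k-\xi_k$ vanishes on $\partial U\cap D$, of positive $\C H^{n-1}$-measure) the $u_k$ are bounded in $H^1(U;\R^n)$. A diagonal extraction over the countably many pairs $(P,U)$ produces a limiting local field $u^{\infty}$ on $\{z>0\}$, taken to be a fixed rigid motion on the components isolated from $D$ (carrying zero energy), with $u^{\infty}=\xi$ on $D\cap\{z>0\}$ in the sense of \eqref{eqn:boundaryValueCondition}; weak lower semicontinuity on each $U$, the uniform convergence $g(z_k)\to g(z)$, and a monotone exhaustion by finite unions of pairwise disjoint $U$'s give $\tfrac12\int_{\{z>0\}}g(z)\mathbb C\epsilon(u^{\infty}){:}\epsilon(u^{\infty})\dx\le\liminf_k\frak F_0(\xi_k,z_k)=\frak F(\xi,z)$, hence $\frak F_0^{\mathrm{loc}}(\xi,z)\le\frak F(\xi,z)$. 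Combining the two steps, $\limsup_{\varepsilon\to0^+}\frak F_\varepsilon(\xi,(z-\delta_\varepsilon)^+)\le\frak F(\xi,z)$, which with the automatic $\Gamma$-$\liminf$ bound shows that $(\xi,(z-\delta_\varepsilon)^+)\to(\xi,z)$ is a recovery sequence; as a byproduct this identifies $\frak F$ with the explicit local energy $\frak F_0^{\mathrm{loc}}$, exactly the form entering the upper energy estimate \eqref{eqn:variationalEnergyIneq}.

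The step I expect to be the main obstacle is the whole of Step 1 (together with the parallel extraction in Step 2): the set $\{z>0\}$ is in general far too irregular for Korn's inequality, so there is no global $H^1(\Omega;\R^n)$ competitor for the limit problem, and the weak limit $u^{\infty}$ survives only as a \emph{local} Sobolev field --- with finite weighted energy $\int g(z)|\epsilon(u^{\infty})|^2$ but possibly infinite $\int|\epsilon(u^{\infty})|^2$ near the free boundary, glued together from the limits on possibly infinitely many, possibly badly-shaped components. This is why the target functional is the local reduced energy $\frak F_0^{\mathrm{loc}}$ rather than $\frak F_0$; making it rigorous needs the weighted decay bound $\|\sigma_\varepsilon\|^2_{L^2(\{0<z<a\})}\le C\,g(a)$ to control the pairing of $\sigma$ with $\epsilon(\widetilde u)$ near $\partial\{z>0\}$, and the Lipschitz covering of Lemma \ref{lemma:LipschitzDomain} to legitimize both Korn's inequality and the integration by parts that turns \eqref{eqn:weakEnergyTransform} into $2\ell=\int_{\{z>0\}}\sigma{:}\epsilon(\widetilde u)\dx$ for local test fields.
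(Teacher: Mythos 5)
There is a genuine gap, and it sits exactly at the point you yourself flag as the ``main obstacle'': the upgrade of the substitution identity \eqref{eqn:weakEnergyTransform} to the limit identity $2\ell=\int_{\{z>0\}}\sigma:\epsilon(\widetilde u)\dx$ for \emph{local} test fields $\widetilde u\in H^1_\mathrm{loc}(\{z>0\};\R^n)$. For $\varepsilon>0$ the identity only holds for globally defined $\widetilde u\in H^1(\Omega;\R^n)$ with $\widetilde u=\xi$ on $D$; to admit fields that jump across $\partial\{z>0\}$ in the limit you would need the normal trace of the limit stress $\sigma$ to vanish on that interface, and nothing you prove gives this. The weighted bound $\|\sigma_\varepsilon\|^2_{L^2(\{0<z<a\})}\leq C\,g(a)$ is too weak: the normal trace of a divergence-free $L^2$ field on a hypersurface is only controlled (in $H^{-1/2}$) by its $L^2$ norm in a neighborhood with a constant that blows up like the inverse width of that neighborhood, and since $g(a)^{1/2}\sim a^{1/2}$ while the width of $\{0<z<a\}$ can be comparable to $a^{1/(1-n/p)}$ or smaller, the product does not tend to zero. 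In fact the conclusion you draw from Step 1 is false for general $\delta_\varepsilon$, so your side claim that ``any $\delta_\varepsilon\rightarrow0^+$ will work'' cannot be repaired: take $\{z=0\}$ a Lebesgue-null hypersurface separating a piece of $\{z>0\}$ from $D$ (note $z\lesssim\mathrm{dist}(\cdot,\{z=0\})^{1-n/p}$, so $\int g(z)^{-1}$ converges transversally and disconnection through $\{z>0\}$ costs a uniformly positive energy), let $\xi$ pull the two pieces apart, and choose $\delta_\varepsilon$ so fast that the width of $\{z\leq\delta_\varepsilon\}$ is $o(\varepsilon)$; then disconnecting across the $\varepsilon$-stiff strip also costs a uniformly positive amount, so $\liminf_{\varepsilon\to0^+}\frak F_\varepsilon(\xi,(z-\delta_\varepsilon)^+)$ stays near the \emph{constrained} value $\frak F_0(\xi,z)$, which is strictly larger than $\frak F(\xi,z)$. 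So the admissible $\delta_\varepsilon$ must be tied to $\varepsilon$ (slow enough), and your argument never produces such a coupling.

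Beyond this, your route proves far more than the lemma needs: Steps 1--2 amount to identifying the $\Gamma$-limit with the local reduced energy $\frak F_0^{\mathrm{loc}}$, which is essentially the (hard) content of the rate-independent complete-damage literature and is nowhere required here. The paper's proof is a short comparison argument: take an \emph{abstract} recovery sequence $(\xi_\varepsilon,z_\varepsilon)\rightarrow(\xi,z)$, use the compact embedding $W^{1,p}(\Omega)\hookrightarrow\C C^{0,\alpha}(\ol\Omega)$ to choose $\delta_\varepsilon\rightarrow0^+$ with $(z-\delta_\varepsilon)^+\leq z_\varepsilon$ (this is precisely the missing coupling), so that $\frak F_\varepsilon(\xi,(z-\delta_\varepsilon)^+)\leq\frak F_\varepsilon(\xi,z_\varepsilon)$ by monotonicity of $g$, and then use \eqref{eqn:weakEnergyTransform} only to swap the Dirichlet datum $\xi_\varepsilon$ for $\xi$, showing $\limsup_{\varepsilon\to0^+}\bigl(\frak F_\varepsilon(\xi,z_\varepsilon)-\frak F_\varepsilon(\xi_\varepsilon,z_\varepsilon)\bigr)\leq0$ from $\xi_\varepsilon\rightarrow\xi$ in $H^1$ and the uniform energy bounds. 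No stress limits, no Korn covering, and no identification of $\frak F$ are needed. If you want to salvage your approach you would have to quantify the coupling between $\delta_\varepsilon$ and $\varepsilon$ and prove the vanishing of the interfacial stress flux, which is substantially harder than the comparison argument above.
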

	\begin{proof}
		The $\Gamma$-limit $\frak F$ exists by the same argument as in Definition \ref{def:GammaLimit}.
		Let $(\xi_\varepsilon,z_\varepsilon)\rightarrow (\xi,z)$ be a recovery sequence.
		Since $z_\varepsilon\rightarrow z$ in $\C C^{0,\alpha}(\ol{\Omega})$ due to the compact embedding $W^{1,p}(\Omega)\hookrightarrow\C C^{0,\alpha}(\ol\Omega)$
		for some $0<\alpha<1-\frac np$,
		we can choose a sequence $\delta_\varepsilon\rightarrow 0^+$ such that $(z-\delta_\varepsilon)^+\leq z_\varepsilon$.
		Note that $(z-\delta_\varepsilon)^+\in W^{1,p}(\Omega)$.
		Consider the arrangement
		\begin{align*}
			\frak F_\varepsilon(\xi,(z-\delta_\varepsilon)^+)-\frak F_\varepsilon(\xi_\varepsilon,z_\varepsilon)
			=\underbrace{\frak F_\varepsilon(\xi,(z-\delta_\varepsilon)^+)-\frak F_\varepsilon(\xi,z_\varepsilon)}_{A_\varepsilon}
			+\underbrace{\frak F_\varepsilon(\xi,z_\varepsilon)-\frak F_\varepsilon(\xi_\varepsilon,z_\varepsilon)}_{B_\varepsilon}.
		\end{align*}
		We observe that $A_\varepsilon\leq 0$ because of (note that $(z-\delta_\varepsilon)^+\leq z_\varepsilon$)
		$$
			\C F_\varepsilon(\epsilon(\xi+\zeta),(z-\delta_\varepsilon)^+)
				\leq \C F_\varepsilon(\epsilon(\xi+\zeta),z_\varepsilon)
		$$
		for all $\zeta\in H_D^1(\Omega;\R^n)$.
		Let $u_\varepsilon,v_\varepsilon\in H_D^1(\Omega;\R^n)$ be given by
		\begin{align*}
			&u_\varepsilon=\mathop\mathrm{arg\,min}_{\zeta\in H_D^1(\Omega;\R^n)}\C F_\varepsilon(\epsilon(\xi+\zeta),z_\varepsilon),\\
			&v_\varepsilon=\mathop\mathrm{arg\,min}_{\zeta\in H_D^1(\Omega;\R^n)}\C F_\varepsilon(\epsilon(\xi_\varepsilon+\zeta),z_\varepsilon).
		\end{align*}
		Applying the substitution equation \eqref{eqn:weakEnergyTransform} for $u_\varepsilon$ with $\widetilde u=v_\varepsilon$
		and for $v_\varepsilon$ with $\widetilde u=u_\varepsilon$, we obtain a calculation as follows:
		\begin{align*}
			B_\varepsilon={}&\C F_\varepsilon(\epsilon(\xi+u_\varepsilon),z_\varepsilon)
				-\C F_\varepsilon(\epsilon(\xi_\varepsilon+v_\varepsilon),z_\varepsilon)\\
			={}&\int_{\Omega}\left(\frac 12(g(z_\varepsilon)+\varepsilon)\mathbb C\epsilon(\xi+u_\varepsilon):\epsilon(\xi+v_\varepsilon)
				-\frac 12(g(z_\varepsilon)+\varepsilon)\mathbb C\epsilon(\xi_\varepsilon+v_\varepsilon):\epsilon(\xi_\varepsilon+u_\varepsilon)\right)\dx\\
			\leq{}&\int_\Omega\frac 12(g(z_\varepsilon)+\varepsilon)\Big(\mathbb C\epsilon(\xi):\epsilon(\xi)
				-\mathbb C\epsilon(\xi_\varepsilon):\epsilon(\xi_\varepsilon)\Big)\dx\\
					&+\|\frac 12(g(z_\varepsilon)+\varepsilon)\mathbb C\epsilon(u_\varepsilon+v_\varepsilon)\|_{L^2(\Omega)}\|\epsilon(\xi-\xi_\varepsilon)\|_{L^2(\Omega)}
		\end{align*}
		Using $\xi_\varepsilon\rightarrow \xi$ in $H^1(\Omega)$, $z_\varepsilon\weaklim z$ in $W^{1,p}(\Omega)$
		and the boundedness of
		$\C F_\varepsilon(\epsilon(\xi+u_\varepsilon),z_\varepsilon)$ and 
		$\C F_\varepsilon(\epsilon(\xi_\varepsilon+v_\varepsilon),z_\varepsilon)$ with respect to $\varepsilon$,
		we end up with $\limsup_{\varepsilon\rightarrow0^+}B_\varepsilon\leq 0$.
		Consequently, taking also into account that $(\xi_\varepsilon,z_\varepsilon)\rightarrow (\xi,z)$ is a recovery sequence, we obtain
		$$
			\limsup_{\varepsilon\rightarrow0^+}\frak F_\varepsilon(\xi,(z-\delta_\varepsilon)^+)
				\leq \limsup_{\varepsilon\rightarrow0^+} \frak F_\varepsilon(\xi_\varepsilon,z_\varepsilon)
				+\limsup_{\varepsilon\rightarrow0^+} A_\varepsilon
				+\limsup_{\varepsilon\rightarrow0^+} B_\varepsilon
				\leq \frak F(\xi,z).
		$$
		\ep
	\end{proof}

	\begin{corollary}
	\label{cor:gammaLimitRepresentation}
		\begin{enumerate}
			\renewcommand{\labelenumi}{(\roman{enumi})}
			\item
				For every $\xi\in H^1(\Omega;\R^n)$ and $z\in W^{1,p}(\Omega)$
				\begin{align*}
					\frak E(\xi,z)=\int_\Omega\frac 1p |\nabla z|^p\dx+\frak F(\xi,z).
				\end{align*}
			\item
				The recovery sequence $(\xi,(z-\delta_\varepsilon)^+)\rightarrow(\xi,z)$ for $\frak F_\varepsilon\xrightarrow{\Gamma}\frak F$ constructed in
				Lemma \ref{lemma:recoverySequence} is a recovery sequence for $\frak E_\varepsilon\xrightarrow{\Gamma}\frak E$ as well.
			\item
				Let $\xi\in H^1(\Omega;\R^n)$, $z\in W^{1,p}(\Omega)$
				and $F\subseteq\Omega$ be open such that $\mathds 1_F z\in W^{1,p}(\Omega)$.
				Then $\frak E(\xi,\mathds 1_F z)\leq\frak E(\xi,z)$.
		\end{enumerate}
	\end{corollary}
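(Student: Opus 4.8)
The plan is to deduce all three statements from the explicit recovery sequence $(\xi,(z-\delta_\varepsilon)^+)$ provided by Lemma~\ref{lemma:recoverySequence}, together with two elementary observations: along this particular sequence the gradient term $\int_\Omega\frac1p|\nabla(\cdot)|^p\dx$ actually converges (although it is only weakly lower semicontinuous, not weakly continuous, on $W^{1,p}(\Omega)$), and $W^\varepsilon(e,\cdot)$ is nondecreasing in its damage argument because $g$ is nondecreasing by \eqref{eqn:assumptiong}. Throughout one may assume $0\le z\le1$ $\C L^n$-a.e., for otherwise $\frak E_\varepsilon(\xi,z)=\frak F_\varepsilon(\xi,z)=\infty$ for every $\varepsilon$, and since $\{0\le z\le1\}$ is convex and strongly closed (hence weakly closed) in $W^{1,p}(\Omega)$ the $\Gamma$-limits inherit $\frak E(\xi,z)=\frak F(\xi,z)=\infty$, making (i) and (iii) trivial.

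For (i) and (ii) I would verify the two defining inequalities of the $\Gamma$-limit, using the unconditional identity $\frak E_\varepsilon(\xi,z)=\int_\Omega\frac1p|\nabla z|^p\dx+\frak F_\varepsilon(\xi,z)$ (with $\frak F_\varepsilon(\xi,z):=\infty$ when $z\notin[0,1]$). For the $\Gamma$-$\liminf$ inequality, I take any $(\xi_\varepsilon,z_\varepsilon)\to(\xi,z)$ in $H^1(\Omega;\R^n)\times W_\mathrm{w}^{1,p}(\Omega)$ and combine weak lower semicontinuity of $z\mapsto\int_\Omega\frac1p|\nabla z|^p\dx$ with the $\Gamma$-$\liminf$ inequality for $\frak F_\varepsilon\xrightarrow{\Gamma}\frak F$ to get $\liminf_\varepsilon\frak E_\varepsilon(\xi_\varepsilon,z_\varepsilon)\ge\int_\Omega\frac1p|\nabla z|^p\dx+\frak F(\xi,z)$, hence $\frak E(\xi,z)\ge\int_\Omega\frac1p|\nabla z|^p\dx+\frak F(\xi,z)$. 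For the reverse inequality I use the recovery sequence $(\xi,(z-\delta_\varepsilon)^+)$: the chain rule gives $\nabla(z-\delta_\varepsilon)^+=\mathds 1_{\{z>\delta_\varepsilon\}}\nabla z$, so (using $\nabla z=0$ a.e. on $\{z=0\}$ and dominated convergence) $\int_\Omega\frac1p|\nabla(z-\delta_\varepsilon)^+|^p\dx\to\int_\Omega\frac1p|\nabla z|^p\dx$, which together with $\limsup_\varepsilon\frak F_\varepsilon(\xi,(z-\delta_\varepsilon)^+)\le\frak F(\xi,z)$ yields $\limsup_\varepsilon\frak E_\varepsilon(\xi,(z-\delta_\varepsilon)^+)\le\int_\Omega\frac1p|\nabla z|^p\dx+\frak F(\xi,z)$. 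This gives (i); and since the last right-hand side equals $\frak E(\xi,z)$ by (i), the same chain shows $(\xi,(z-\delta_\varepsilon)^+)$ is a recovery sequence for $\frak E_\varepsilon\xrightarrow{\Gamma}\frak E$, i.e. (ii).

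For (iii) I would work at the $\varepsilon$-level with the comparison sequence $w_\varepsilon:=\mathds 1_F(z-\delta_\varepsilon)^+$. The key identity is $\mathds 1_F(z-\delta_\varepsilon)^+=(\mathds 1_F z-\delta_\varepsilon)^+$ (valid since $z\ge0$ and $\delta_\varepsilon>0$): as $\mathds 1_F z\in W^{1,p}(\Omega)$ by hypothesis, this shows $w_\varepsilon\in W^{1,p}(\Omega)$, and since $\mathds 1_F z$ coincides with $z$ on the open set $F$ one obtains the a.e. formulas $\nabla(\mathds 1_F z)=\mathds 1_F\nabla z$ and $\nabla w_\varepsilon=\mathds 1_{F\cap\{z>\delta_\varepsilon\}}\nabla z$, whence $w_\varepsilon\to\mathds 1_F z$ strongly in $W^{1,p}(\Omega)$ by dominated convergence. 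On the energy side, $|\nabla w_\varepsilon|^p\le|\nabla(z-\delta_\varepsilon)^+|^p$ a.e., and since $0\le w_\varepsilon\le(z-\delta_\varepsilon)^+\le1$ with $g$ nondecreasing and $\mathbb{C}e:e\ge0$ we have $W^\varepsilon(e,w_\varepsilon)\le W^\varepsilon(e,(z-\delta_\varepsilon)^+)$ pointwise, hence $\frak F_\varepsilon(\xi,w_\varepsilon)\le\frak F_\varepsilon(\xi,(z-\delta_\varepsilon)^+)$; adding the gradient terms gives $\frak E_\varepsilon(\xi,w_\varepsilon)\le\frak E_\varepsilon(\xi,(z-\delta_\varepsilon)^+)$ for every $\varepsilon$. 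Passing to the limit, the $\Gamma$-$\liminf$ inequality applied to $(\xi,w_\varepsilon)\to(\xi,\mathds 1_F z)$ together with (ii) yields $\frak E(\xi,\mathds 1_F z)\le\liminf_\varepsilon\frak E_\varepsilon(\xi,w_\varepsilon)\le\limsup_\varepsilon\frak E_\varepsilon(\xi,(z-\delta_\varepsilon)^+)=\frak E(\xi,z)$.

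The main obstacle, and the reason one cannot simply invoke ``$\Gamma$-convergence is stable under continuous perturbations'', is precisely that $\int_\Omega\frac1p|\nabla z|^p\dx$ is merely weakly lower semicontinuous on $W_\mathrm{w}^{1,p}(\Omega)$, so a priori the gradient term spoils the upper ($\limsup$) bound. The whole point of using the truncation recovery sequences from Lemma~\ref{lemma:recoverySequence} is that their gradients converge strongly in $L^p$, so the lower and upper bounds for the gradient term coincide and the $\Gamma$-limit does split additively as in (i). The remaining ingredients---the truncation identity $\mathds 1_F(z-\delta_\varepsilon)^+=(\mathds 1_F z-\delta_\varepsilon)^+$, the a.e. identity $\nabla(\mathds 1_F z)=\mathds 1_F\nabla z$ (which also underlies Lemma~\ref{lemma:truncation}), and the pointwise monotonicity of $W^\varepsilon$ in $z$---are routine.
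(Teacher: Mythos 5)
Your proposal is correct and follows essentially the same route as the paper: the additive splitting in (i) via weak lower semicontinuity of the gradient term plus the explicit truncation recovery sequence of Lemma \ref{lemma:recoverySequence}, (ii) as an immediate consequence, and (iii) by comparing $\frak E_\varepsilon(\xi,\mathds 1_F(z-\delta_\varepsilon)^+)\leq\frak E_\varepsilon(\xi,(z-\delta_\varepsilon)^+)$ at the $\varepsilon$-level (monotonicity of $g$ and the truncated gradient) and passing to the limit with the $\Gamma$-$\liminf$ inequality and (ii). Your extra care with the constraint case $z\notin[0,1]$ and the identity $\mathds 1_F(z-\delta_\varepsilon)^+=(\mathds 1_F z-\delta_\varepsilon)^+$ only makes explicit what the paper leaves implicit.
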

	\begin{proof}
		\begin{enumerate}
			\renewcommand{\labelenumi}{(\roman{enumi})}
			\item
				Let $(\xi_\varepsilon,z_\varepsilon)\rightarrow (\xi,z)$ be a recovery sequence for $\frak E_\varepsilon\xrightarrow{\Gamma}\frak E$.
				Hence, $\xi_\varepsilon\rightarrow \xi$ in $H^1(\Omega;\R^n)$ and $z_\varepsilon\weaklim z$ in $W^{1,p}(\Omega)$.
				Applying ''$\liminf_{\varepsilon\rightarrow0^+}$`` on each side of the identity
				\begin{align}
				\label{eqn:energyStructure}
					\frak E_\varepsilon(\xi_\varepsilon,z_\varepsilon)
						=\int_\Omega \frac 1p |\nabla z_\varepsilon|^p\dx
						+\frak F_\varepsilon(\xi_\varepsilon,z_\varepsilon)
				\end{align}
				yields for a subsequence
				$$
					\frak E(\xi,z)
						\geq\int_\Omega \frac 1p |\nabla z|^p\dx+\frak F(\xi,z).
				$$
				
				The ''$\leq$`` - part can be shown by considering a recovery sequence $(\xi,(z-\varepsilon)^+)\rightarrow (\xi,z)$ for
				$\frak F_\varepsilon\xrightarrow{\Gamma}\frak F$ according to Lemma \ref{lemma:recoverySequence}
				and applying ''$\liminf_{\varepsilon\rightarrow 0^+}$`` in \eqref{eqn:energyStructure} with
				$(\xi_\varepsilon,z_\varepsilon)=(\xi,(z-\varepsilon)^+)$ on both sides.
			\item
				This follows from (i).
			\item
				Without loss of generality, we assume $0\leq z\leq 1$ on $\Omega$.
				Let $(\xi,(z-\delta_\varepsilon)^+)\rightarrow(\xi,z)$ be a recovery sequence for
				$\frak E_\varepsilon\xrightarrow{\Gamma}\frak E$ as in (ii).
				By assumption, $\mathds 1_F(z-\delta_\varepsilon)^+\in W^{1,p}(\Omega)$ and
				$\mathds 1_F(z-\delta_\varepsilon)^+\rightarrow \mathds 1_F z$ in $W^{1,p}(\Omega)$ as $\varepsilon\rightarrow 0^+$.
				
				Since $\C E_\varepsilon(\epsilon(\xi+\zeta),\mathds 1_F (z-\delta_\varepsilon)^+)
				\leq \C E_\varepsilon(\epsilon(\xi+\zeta),(z-\delta_\varepsilon)^+)$
				for all $\zeta\in H_D^1(\Omega;\R^n)$, we obtain
				\begin{align*}
					\inf_{\zeta\in H_D^1(\Omega;\R^n)}\C E_\varepsilon(\epsilon(\xi+\zeta),\mathds 1_F (z-\delta_\varepsilon)^+)
					&\leq \inf_{\zeta\in H_D^1(\Omega;\R^n)}\C E_\varepsilon(\epsilon(\xi+\zeta),(z-\delta_\varepsilon)^+).
				\end{align*}
				Therefore,
				\begin{align*}
					\frak E_\varepsilon(\xi,\mathds 1_F (z-\delta_\varepsilon)^+)&\leq\frak E_\varepsilon(\xi,(z-\delta_\varepsilon)^+).
				\end{align*}
				Passing to $\varepsilon\rightarrow 0^+$ yields the claim.
			 \ep
		\end{enumerate}
	\end{proof}
	
	\begin{lemma}
	\label{lemma:EfrakEestimate}
		Let $\xi\in H^1(\Omega;\R^n)$ and $z\in W^{1,p}(\Omega)$
		with $0\leq z\leq 1$.
		Furthermore, let $u\in H_\mathrm{loc}^1(\{z>0\};\R^n)$ and 
		for every Lipschitz domain $U\compact \{z>0\}$, $u=\xi$ on $D\cap\partial U$ in the sense of traces.
		Then
		\begin{align*}
			\frak E(\xi,z)\leq \C E(\epsilon(u),z).
		\end{align*}
	\end{lemma}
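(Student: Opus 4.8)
The goal is to show $\frak E(\xi,z) \le \C E(\epsilon(u),z)$, where $\frak E$ is the $\Gamma$-limit of $\frak E_\varepsilon$ and $u$ is only a local $H^1$-function on the super-level set $\{z>0\}$ with the correct Dirichlet trace on each Lipschitz piece of $\{z>0\}$ touching $D$. Since $\frak E$ is the lower semicontinuous envelope of $\frak E_0$ (see the Remark after Definition \ref{def:GammaLimit}), and $\frak E_0(\xi,z) = \inf_{\zeta\in H_D^1(\Omega;\R^n)} \C E_0(\epsilon(\xi+\zeta),z)$, it suffices to produce, for each $\delta>0$, a competitor $\zeta_\delta \in H_D^1(\Omega;\R^n)$ with $\C E_0(\epsilon(\xi+\zeta_\delta),z) \le \C E(\epsilon(u),z) + C\delta$, and then pass $\delta\to 0^+$; because $\frak E_0 \ge \frak E$ pointwise this would already give the bound, but actually we need the envelope, so more carefully: construct $\zeta_\delta$ as above and use $\frak E(\xi,z) \le \frak E_0(\xi,z) \le \C E_0(\epsilon(\xi+\zeta_\delta),z)$.

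\textbf{Construction of the competitor.} The difficulty is that $u$ lives only on $\{z>0\}$ and has no control near the degenerate set $\{z=0\}$, so $\xi + \zeta$ must interpolate between $u$ (where $z$ is bounded away from zero) and $\xi$ (near $\{z=0\}$ and on $D$). First I would apply Lemma \ref{lemma:LipschitzDomain} to the admissible-in-general (actually here we only need relatively open) set $\{z>0\}$; more to the point, by the uniform continuity of $z$ (recall $W^{1,p}(\Omega)\hookrightarrow \C C^{0,\alpha}(\ol\Omega)$ for $p>n$), the sets $\{z>\tau\}$ exhaust $\{z>0\}$ as $\tau\to 0^+$, and on a Lipschitz subdomain $U_\tau \compact \{z > \tau/2\}$ with $\{z>\tau\}\subseteq U_\tau$ we have $u|_{U_\tau}\in H^1(U_\tau;\R^n)$. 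Introduce a cutoff $\chi_\tau \in \C C^\infty(\ol\Omega)$ with $\chi_\tau = 1$ on $\{z \ge 2\tau\}$, $\chi_\tau = 0$ on $\{z \le \tau\}$, and set $\zeta_\tau := \chi_\tau (u - \xi)$ extended by $0$ outside $\{z>\tau\}$. Then $\xi + \zeta_\tau = u$ on $\{z\ge 2\tau\}$ and $\xi+\zeta_\tau = \xi$ on $\{z\le\tau\}$, so $\zeta_\tau \in H^1(\Omega;\R^n)$ and, because $u = \xi$ on $D\cap\partial U$ in the trace sense (for $U$ a Lipschitz domain in $\{z>0\}$) and $\xi+\zeta_\tau$ agrees with $\xi$ near the parts of $D$ where $z$ is small, one checks $\zeta_\tau \in H_D^1(\Omega;\R^n)$. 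The key point then is to estimate $\C E_0(\epsilon(\xi+\zeta_\tau),z) = \int_\Omega \frac1p|\nabla z|^p + \frac12 g(z)\mathbb C \epsilon(\xi+\zeta_\tau):\epsilon(\xi+\zeta_\tau)\dx$ against $\C E(\epsilon(u),z) = \int_{\{z>0\}} \frac1p|\nabla z|^p + \frac12 g(z)\mathbb C\epsilon(u):\epsilon(u)\dx$.

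\textbf{The error estimate and the main obstacle.} Split $\Omega$ into $\{z\ge 2\tau\}$, the transition layer $\{\tau < z < 2\tau\}$, and $\{z\le\tau\}$. On the first region the integrands coincide. On $\{z\le\tau\}$ we have $g(z)\le g(\tau)\to 0$ while $\epsilon(\xi+\zeta_\tau)=\epsilon(\xi)$, so that contribution is $\le \frac12 g(\tau)\|\mathbb C\|\|\epsilon(\xi)\|_{L^2(\Omega)}^2 \to 0$. The genuinely delicate term is the transition layer: there $\epsilon(\zeta_\tau) = \chi_\tau \epsilon(u-\xi) + \nabla\chi_\tau \odot (u-\xi)$, and $|\nabla\chi_\tau|$ blows up like the reciprocal of the layer width. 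The standard remedy is to choose the layer adapted to $z$ — e.g. take $\chi_\tau = \psi(z/\tau)$ for a fixed smooth $\psi:\R\to[0,1]$ with $\psi=0$ on $(-\infty,1]$, $\psi=1$ on $[2,\infty)$ — so that $\nabla\chi_\tau = \tau^{-1}\psi'(z/\tau)\nabla z$ and the problematic term is controlled by $\tau^{-1}\|\nabla z\|_{L^p}\|u-\xi\|_{L^{q}(\{\tau<z<2\tau\})}$ against the weight $g(z)\le g(2\tau)\le C\tau$ (using $g(0)=0$ and $g\in\C C^1$, so $g(2\tau)\le 2\|g'\|_\infty\tau$); the factor $\tau$ from the weight cancels the $\tau^{-1}$ from the gradient of the cutoff, and what remains, $C\|\nabla z\|_{L^p}^2 \|u-\xi\|^2$ integrated over the shrinking layer $\{\tau<z<2\tau\}$ whose measure tends to $0$, vanishes as $\tau\to 0^+$ by dominated convergence (note $u-\xi\in H^1_{\mathrm{loc}}(\{z>0\})$ is locally, hence on each fixed $\{z>\tau_0\}$, in $L^2$, and the layers are eventually contained in any $\{z>\tau_0\}^c\cap\{z>0\}$... more carefully one localizes first to a fixed Lipschitz $U$). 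The cross terms $g(z)\mathbb C\epsilon(u):\nabla\chi_\tau\odot(u-\xi)$ are handled by Cauchy--Schwarz against the two pieces just bounded. I expect this transition-layer estimate — getting the cancellation between the weight degeneracy $g(z)\sim\tau$ and the cutoff gradient $\sim\tau^{-1}$ to be uniform, and justifying that the layer contributions vanish despite $u$ having no global integrability — to be the main obstacle; everything else is routine. Passing $\tau\to 0^+$ and then using $\frak E(\xi,z)\le\frak E_0(\xi,z)\le\liminf_{\tau\to0^+}\C E_0(\epsilon(\xi+\zeta_\tau),z)\le\C E(\epsilon(u),z)$ finishes the proof.
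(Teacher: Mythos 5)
There is a genuine gap, and it sits exactly at the step you yourself flag as the main obstacle: the transition-layer estimate does not close. In the elastic energy the cutoff term $\nabla\chi_\tau\odot(u-\xi)$ enters \emph{quadratically}, so with $\chi_\tau=\psi(z/\tau)$ the layer contribution is of order $g(2\tau)\,\tau^{-2}\int_{\{\tau<z<2\tau\}}|\psi'(z/\tau)|^2|\nabla z|^2|u-\xi|^2\dx\sim\tau^{-1}\int_{\{\tau<z<2\tau\}}|\nabla z|^2|u-\xi|^2\dx$: the single factor $\tau$ from $g(z)\le C\tau$ is beaten by the factor $\tau^{-2}$ from $|\nabla\chi_\tau|^2$, so the advertised cancellation is not there. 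Even in the most favorable case ($u$ bounded, $|\nabla z|\simeq1$, layer measure $\simeq\tau$) this term is only $O(1)$, not $o(1)$; in the actual setting it can diverge, since $u\in H^1_{\mathrm{loc}}(\{z>0\};\R^n)$ carries no uniform integrability up to $\partial\{z>0\}$, so $\|u-\xi\|_{L^2(\{\tau<z<2\tau\})}$ may blow up at an arbitrary rate as $\tau\to0^+$ and your dominated-convergence step has no dominating function. Note also that finiteness of the right-hand side $\C E(\epsilon(u),z)$ only controls $\epsilon(u)$ with the degenerate weight $g(z)$; it gives no control of $u$ itself near $\{z=0\}$, and on the possibly very irregular set $\{z>0\}$ there is no weighted Poincar\'e/Hardy inequality to convert one into the other. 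Hence the glued competitor cannot be shown to satisfy $\C E_0(\epsilon(\xi+\zeta_\tau),z)\le\C E(\epsilon(u),z)+o(1)$.

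The structural reason is your decision to use only the pointwise bound $\frak E(\xi,z)\le\frak E_0(\xi,z)$ with the \emph{same} $z$: once $z$ is fixed, the degenerate energy does see the region where $z$ is small, and a gluing cost there is unavoidable. The paper's proof avoids any layer estimate by lowering the damage variable instead of cutting off the displacement: with $z_\varepsilon:=(z-\varepsilon)^+$ one has $\{z_\varepsilon>0\}\compact\{z>0\}$, one chooses a Lipschitz domain $U$ with $\{z_\varepsilon>0\}\subseteq\ol U\subseteq\{z>0\}$ and \emph{any} extension $u_\varepsilon\in H^1(\Omega;\R^n)$ of $u|_U$ with $u_\varepsilon=\xi$ on $D$; since $g(0)=0$, the energy with damage $z_\varepsilon$ is blind to $u_\varepsilon$ outside $\{z_\varepsilon>0\}$, which gives $\widetilde{\frak E}(\xi,z_\varepsilon)\le\C E(\epsilon(u_\varepsilon),z_\varepsilon)\le\C E(\epsilon(u),z_\varepsilon)$, where $\widetilde{\frak E}=\inf_{\varepsilon>0}\frak E_\varepsilon$ is the unregularized reduced energy. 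Because $\frak E$ is the lower semicontinuous envelope of $\widetilde{\frak E}$ and $(\xi,z_\varepsilon)\to(\xi,z)$ in $H^1(\Omega;\R^n)\times W_\mathrm{w}^{1,p}(\Omega)$, letting $\varepsilon\to0^+$ (monotone convergence of $\C E(\epsilon(u),z_\varepsilon)$) yields the claim. To repair your argument you would have to let $z$ vary along such a truncation in the envelope characterization of $\frak E$ — at which point the cutoff construction and the layer estimate become unnecessary.
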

	\begin{proof}
		Consider an arbitrary $\varepsilon>0$ and define $z_\varepsilon:=(z-\varepsilon)^+$.
		Since $z\in \C C(\ol\Omega)$,
		it holds the compact inclusion $\{z_\varepsilon>0\}\compact \{z>0\}$.
		There exists an open set $U$ with Lipschitz boundary such that
		$\{z_\varepsilon>0\}\subseteq \overline  U\subseteq\{z>0\}$ (e.g. construction of $\partial U\setminus \partial\Omega$ by
		polygons such that $\partial U$ fulfills the Lipschitz boundary condition).
		
		Now, we have $u|_U\in H^1(U;\R^n)$ as well as $u=\xi$ on $\partial U\cap D$.
		There exists an extension $u_\varepsilon\in H^1(\Omega;\R^n)$ with $u_\varepsilon|_U=u|_U$
		and $u_\varepsilon=\xi$ on $D$.
		The monotonicity of $\{\frak E_\varepsilon\}$ with respect to $\varepsilon$ implies that $\frak E$
		is the lower semi-continuous envelope of
		$\widetilde{\frak E}(\xi,z):=\inf_{\varepsilon>0}\frak E_\varepsilon(\xi,z)$.
		in the $H^1(\Omega;\R^n)\times W_\mathrm{w}^{1,p}(\Omega)$-topology
		(cf. \cite{Braides02}).
		By switching the infima, it holds
		$$
			\widetilde{\frak E}(\xi,z)=
			\begin{cases}
				\inf_{\zeta\in H_D^1(\Omega;\R^n)}\C E(\epsilon(\xi+\zeta),z)&\text{if }0\leq z\leq 1,\\
				\infty&\text{else.}
			\end{cases}
		$$
		Since $u=u_\varepsilon$ on $\{z_\varepsilon>0\}$, we get
		\begin{align*}
			\frak E(\xi,z)={}&
				\inf_{\xi_\varepsilon\rightarrow \xi\text{ in }H^1(\Omega;\R^n)}\;
				\inf_{\eta_\varepsilon\weaklim z\text{ in }W^{1,p}(\Omega)}\;
				\liminf_{\varepsilon\rightarrow 0}\;
				\widetilde{\frak E}(\xi_\varepsilon,\eta_\varepsilon)\\
			\leq{}&
				\liminf_{\varepsilon\rightarrow 0}\;\widetilde{\frak E}(\xi,z_\varepsilon)
			\leq
				\liminf_{\varepsilon\rightarrow 0}\;\C E(\epsilon(u_\varepsilon),z_\varepsilon)\\
			\leq{}&
				\liminf_{\varepsilon\rightarrow 0}\;\C E(\epsilon(u),z_\varepsilon)
			=\C E(\epsilon(u),z).
		\end{align*}
		\ep
	\end{proof}
	
\subsection{Degenerate limit}
	\label{section:simplProblem}
	
	In the first step of the proof of Theorem \ref{theorem:mainExistenceResult}, an existence result of a simplified problem,
	where no exclusion of material parts are considered, will be shown.
	The statement we are going to prove in this subsection is given as follows.
	\begin{proposition}[Degenerate limit]
	\label{prop:existenceResultSimplified}
		Let $b\in W^{1,1}(0,T;W^{1,\infty}(\Omega;\R^n))$ and
		$z^0\in W^{1,p}(\Omega)$ with $0\leq z^0\leq 1$ and $\{z^0>0\}$ admissible with respect to $D$
		be initial-boundary data
		and let $W$ be given by \eqref{eqn:definitionW} satisfying \eqref{eqn:assumptiong}.
		Then there exist functions
		\begin{align*}
		\begin{aligned}
			&z\in L^\infty(0,T;W^{1,p}(\Omega))\cap H^1(0,T;L^2(\Omega)),
			&&u\in L_t^2H^1_{x,\mathrm{loc}}(\frak A_D(\{z>0\});\R^n),\\
			&e\in L^2(\{z>0\};\R_\mathrm{sym}^{n\times n})
		\end{aligned}
		\end{align*}
		with $e=\epsilon(u)$ in $\frak A_D(\{z>0\})$
		such that the properties (ii)-(v) of Definition \ref{def:weakSolution} are fulfilled
		for $F:=\{z>0\}$.
		Moreover, $\frak e_0^+$ (see energy inequality \eqref{eqn:EI}) can be chosen to be $\frak E(b^0,z^0)$ which satisfies
		\eqref{eqn:variationalEnergyIneq} by Lemma \ref{lemma:EfrakEestimate}.
	\end{proposition}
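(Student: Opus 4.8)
The plan is to obtain $(e,u,z)$ as a limit, as $\varepsilon\to 0^+$, of weak solutions $(u_\varepsilon,z_\varepsilon)$ of the $\varepsilon$-regularised system --- i.e.\ the system of Definition \ref{def:classicalSolution} with $W$ replaced by the \emph{non-degenerate} density $W^\varepsilon$ from \eqref{eqn:Wreg}. For $\varepsilon>0$ this is an incomplete damage model with a coercive free energy, hence possesses a weak solution by the known theory (in the sense of \cite{HK10a}). To force the initial energy to converge to the right value, I would not keep the initial damage fixed but run the $\varepsilon$-problem with initial datum $z^0_\varepsilon:=(z^0-\delta_\varepsilon)^+\in W^{1,p}(\Omega)$, where $\delta_\varepsilon\to 0^+$ is the sequence provided by Lemma \ref{lemma:recoverySequence}, so that $(b(0),z^0_\varepsilon)\to(b(0),z^0)$ is a recovery sequence for $\frak F_\varepsilon\xrightarrow{\Gamma}\frak F$ and, by Corollary \ref{cor:gammaLimitRepresentation}(i)--(ii), also for $\frak E_\varepsilon\xrightarrow{\Gamma}\frak E$ (Definition \ref{def:GammaLimit}). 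Each $(u_\varepsilon,z_\varepsilon)$ then satisfies the $\varepsilon$-analogues of \eqref{eqn:forceBalanceWeak} and \eqref{eqn:VI} on the whole cylinder $\Omega_T$, together with the energy inequality whose initial value is $\C E_\varepsilon(\epsilon(u_\varepsilon(0)),z^0_\varepsilon)=\frak E_\varepsilon(b(0),z^0_\varepsilon)$ (equality, since the quasi-static equation at $t=0$ is the Euler--Lagrange equation of the strictly convex functional $\zeta\mapsto\C E_\varepsilon(\epsilon(b(0)+\zeta),z^0_\varepsilon)$).

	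Next I would derive $\varepsilon$-uniform estimates. From the energy inequality, Young's inequality and a Gronwall argument using $b\in W^{1,1}(0,T;W^{1,\infty}(\Omega;\R^n))$ (together with the monotonicity of $\varepsilon\mapsto\frak E_\varepsilon$ to bound $\C E_\varepsilon(\epsilon(u_\varepsilon(0)),z^0_\varepsilon)$) one obtains $\|z_\varepsilon\|_{L^\infty(0,T;W^{1,p}(\Omega))}+\|\partial_t z_\varepsilon\|_{L^2(\Omega_T)}+\|\sqrt{g(z_\varepsilon)+\varepsilon}\,\epsilon(u_\varepsilon)\|_{L^\infty(0,T;L^2(\Omega))}\le C$. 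A second, decisive bound comes from testing the $\varepsilon$-damage inequality with the admissible constant $\zeta\equiv-1$ and integrating in time: the gradient term drops out, and since $0\le z^0_\varepsilon\le 1$ one is left with $\int_0^T\int_\Omega W_{,z}(\epsilon(u_\varepsilon),z_\varepsilon)\dxs\le\alpha|\Omega|T+\beta|\Omega|$; because $W_{,z}(e,z)=\frac12 g'(z)\mathbb Ce:e\ge\frac{\eta}{2}\lambda_{\min}(\mathbb C)|e|^2$ by \eqref{eqn:assumptiong}, this yields $\|\epsilon(u_\varepsilon)\|_{L^2(\Omega_T)}\le C$ uniformly --- although $u_\varepsilon$ itself carries no $\varepsilon$-uniform $H^1$-bound. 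Along a subsequence one then has: $z_\varepsilon\to z$ in $\C C([0,T];\C C(\ol\Omega))$ by Aubin--Lions (using the compact embedding $W^{1,p}(\Omega)\hookrightarrow\C C(\ol\Omega)$, valid for $p>n$), hence $g(z_\varepsilon)+\varepsilon\to g(z)$ uniformly on $\ol{\Omega_T}$; $z_\varepsilon\weakstarlim z$ in $L^\infty(0,T;W^{1,p}(\Omega))$ and $\partial_t z_\varepsilon\weaklim\partial_t z$ in $L^2(\Omega_T)$ (so $0\le z\le 1$, $\partial_t z\le 0$, $z$ non-increasing); $\nabla z_\varepsilon\to\nabla z$ strongly in $L^p(\Omega_T)$ by a standard monotonicity argument for the $p$-Laplacian; and $\epsilon(u_\varepsilon)\weaklim e$ in $L^2(\Omega_T)$ for some $e\in L^2(\Omega_T;\R_\mathrm{sym}^{n\times n})$, whose restriction to $\{z>0\}$ is the required strain field. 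Since $z\in\C C(\ol{\Omega_T})$, $F:=\{z>0\}$ is a shrinking set, $z^+(0)=z(0)=z^0$, and, being in $H^1(0,T;L^2(\Omega))$, $z$ has empty jump set.

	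To reconstruct the deformation I would work locally on $\frak A_D(F)$. By Corollary \ref{cor:representationFormula} applied to the shrinking set $\frak A_D(F)$, there is a countable family of Lipschitz domains $U_k^m\compact(\frak A_D(F))(t_m)=\frak A_D(\{z(t_m)>0\})$ with $\C H^{n-1}(\partial U_k^m\cap D)>0$, forming fine representations. On $U_k^m$ one has $z(s)\ge z(t_m)\ge c>0$ on $\ol{U_k^m}$ for all $s\le t_m$ (by monotonicity and compactness of $\ol{U_k^m}$ in $\{z(t_m)>0\}$), hence $g(z_\varepsilon)+\varepsilon\ge\gamma_{k,m}>0$ there for $\varepsilon$ small; with the first energy bound this bounds $\epsilon(u_\varepsilon)$ in $L^2(0,t_m;L^2(U_k^m;\R_\mathrm{sym}^{n\times n}))$, and Korn's inequality applied to $u_\varepsilon-b$ --- which vanishes on the positive-measure piece $\partial U_k^m\cap D$ in the trace sense --- then bounds $u_\varepsilon$ in $L^2(0,t_m;H^1(U_k^m;\R^n))$, using $b\in W^{1,1}(0,T;W^{1,\infty})$. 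A diagonal extraction over $(k,m)$ produces a subsequence with $u_\varepsilon\weaklim u$ on every $U_k^m\times(0,t_m)$; the local limits agree on overlaps, so by Proposition \ref{prop:localSob}(a) they define $u\in L_t^2H^1_{x,\mathrm{loc}}(\frak A_D(F);\R^n)$ with $\epsilon(u)=e$ there, and continuity of traces under weak $H^1$-convergence together with Proposition \ref{prop:localSob}(b) gives $u=b$ on $D_T\cap F$. It remains to pass to the limit in the evolution laws. Testing the $\varepsilon$-version of \eqref{eqn:forceBalanceWeak} against $\phi(s)\zeta(x)$ with $\zeta\in H_D^1(\Omega;\R^n)$, $\phi\in\C C_\mathrm{c}^\infty(0,T)$, and using $(g(z_\varepsilon)+\varepsilon)\mathbb C\epsilon(u_\varepsilon)\weaklim g(z)\mathbb Ce$ in $L^2(\Omega_T)$ (a uniformly convergent coefficient times a weakly convergent strain) together with $g(z)=0$ on $\{z=0\}$ gives \eqref{eqn:forceBalanceWeak} for a.e.\ $t$. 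In \eqref{eqn:VI} the $p$-Laplacian term passes by the strong $L^p$-convergence of $\nabla z_\varepsilon$ (and $\nabla z=0$ a.e.\ on $\{z=0\}$); the term $\int_\Omega W_{,z}(\epsilon(u_\varepsilon),z_\varepsilon)\zeta\dx$ with $\zeta\le 0$ is handled by the observation that $v\mapsto\int_\Omega(-\zeta)\frac12 g'(z)\mathbb Cv:v\dx$ is a weakly lower semicontinuous convex functional on $L^2(\Omega)$ (nonnegative bounded weight, $\mathbb C$ positive definite) and that $g'(z_\varepsilon)\to g'(z)$ uniformly, whence $\limsup_\varepsilon\int_\Omega W_{,z}(\epsilon(u_\varepsilon),z_\varepsilon)\zeta\dx\le\int_\Omega W_{,z}(e,z)\zeta\dx\le\int_{\{z>0\}}W_{,z}(e,z)\zeta\dx$ (the last step because $\zeta\le 0$ and $W_{,z}\ge 0$); combined with the $\varepsilon$-inequality this yields \eqref{eqn:VI}, while the sign constraints and $z^+(0)=z^0$ pass trivially. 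Condition \eqref{eqn:damageJumpCondition} holds because $J_z=\emptyset$ and $z(t)=z(t)\mathds 1_{\{z(t)>0\}}$. Finally, \eqref{eqn:EI} (with empty jump sum) follows by taking $\liminf_\varepsilon$ in the $\varepsilon$-energy inequality: weak lower semicontinuity handles $\frac1p\|\nabla z_\varepsilon\|_{L^p}^p$, the elastic term $\frac12\|\sqrt{g(z_\varepsilon)+\varepsilon}\,\mathbb C^{1/2}\epsilon(u_\varepsilon)\|_{L^2}^2$ (which produces $\C E(e(t),z(t))$ since $\sqrt{g(z_\varepsilon)+\varepsilon}\,\epsilon(u_\varepsilon)\weaklim\sqrt{g(z)}\,e$), and the two dissipation terms; the work term converges as above, and $\C E_\varepsilon(\epsilon(u_\varepsilon(0)),z^0_\varepsilon)=\frak E_\varepsilon(b(0),z^0_\varepsilon)\to\frak E(b(0),z^0)$, so one may take $\frak e_0^+:=\frak E(b(0),z^0)$, which obeys \eqref{eqn:variationalEnergyIneq} by Lemma \ref{lemma:EfrakEestimate}.

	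The hard part is everything forced by the loss of coercivity at $\varepsilon=0$. First, $\epsilon(u_\varepsilon)$ is not weakly precompact via Korn's inequality, so the uniform $L^2(\Omega_T)$-control of the strains has to be squeezed out of the damage inequality tested with $\zeta\equiv-1$, and the deformation can be reconstructed only locally on $\frak A_D(F)$ through the covering machinery of Section~\ref{subsection:covering}. Second, the degenerate nonlinear terms $g(z)\mathbb Ce$ and $g'(z)\mathbb Ce:e$ must be passed to the limit with only \emph{weak} $L^2$-convergence of $\epsilon(u_\varepsilon)$; this succeeds solely because the uniform convergence of $z_\varepsilon$ makes the coefficients strongly convergent multipliers and because the sign of the test function in \eqref{eqn:VI} turns the relevant quadratic functional weakly lower semicontinuous in the right direction. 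Third, identifying $\frak e_0^+$ with the $\Gamma$-limit value $\frak E(b(0),z^0)$ --- rather than with the a priori larger number $\lim_\varepsilon\frak E_\varepsilon(b(0),z^0)$ --- is precisely what makes the explicit recovery sequence of Lemma~\ref{lemma:recoverySequence}, i.e.\ the perturbed initial data $z^0_\varepsilon=(z^0-\delta_\varepsilon)^+$, indispensable.
	\ep
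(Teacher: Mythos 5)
Your overall strategy is the same as the paper's (regularize with $W^\varepsilon$, run the incomplete-damage existence result with the recovery-sequence initial data $z^0_\varepsilon=(z^0-\delta_\varepsilon)^+$, get a priori bounds from the energy inequality and from testing the damage inequality with $\zeta\equiv-1$, reconstruct $u$ locally on $\frak A_D(F)$ via the Lipschitz covers and Korn, pass to the limit using the uniform convergence of $z_\varepsilon$ and lower semicontinuity, and take $\frak e_0^+=\frak E(b(0),z^0)$). But there is one genuine gap at the decisive estimate: your claim that testing the $\varepsilon$-damage inequality with $\zeta\equiv-1$ gives $\int_{\Omega_T}W_{,z}(\epsilon(u_\varepsilon),z_\varepsilon)\leq C$ and hence a uniform bound on the \emph{full} strain $\|\epsilon(u_\varepsilon)\|_{L^2(\Omega_T)}$. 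The $\varepsilon$-regularized problem is an obstacle problem in $z$ (the constraint $z_\varepsilon\geq 0$ is active), so its variational inequality carries the multiplier term $r_\varepsilon$ coming from $\partial I_{[0,\infty)}(z_\varepsilon)$ (see \eqref{eqn:ID4}); testing with $\zeta\equiv-1$ only yields $\int_{\Omega_T}(W_{,z}^\varepsilon+r_\varepsilon)\leq C$, and since $r_\varepsilon\leq 0$ it cannot simply be dropped. Using the structural information \eqref{eqn:rstructure} ($r_\varepsilon=-\chi_\varepsilon W_{,z}$ with $\chi_\varepsilon$ supported on $\{z_\varepsilon=0\}$, $0\leq\chi_\varepsilon\leq1$), the test controls only $\int_{\{z_\varepsilon>0\}}W_{,z}^\varepsilon$, i.e.\ the \emph{truncated} strain $\widehat e_\varepsilon=\epsilon(u_\varepsilon)\mathds 1_{\{z_\varepsilon>0\}}$ — this is exactly Lemma \ref{lemma:aprioriEstimatesI}(i). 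On the completely damaged set the energy only gives $\varepsilon\|\epsilon(u_\varepsilon)\|^2_{L^2}\leq C$, so the full strain may blow up like $\varepsilon^{-1/2}$ there; a uniform $L^2(\Omega_T)$ bound on $\epsilon(u_\varepsilon)$ is not available.

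This gap propagates into your limit passages: you define $e$ as the $L^2(\Omega_T)$-weak limit of $\epsilon(u_\varepsilon)$, pass to the limit in the quasi-static equation by writing the stress as ``uniformly convergent coefficient times weakly convergent strain'', and handle the $W_{,z}$-term in \eqref{eqn:VI} by convex weak lower semicontinuity in $\epsilon(u_\varepsilon)$ — all of which presuppose the unjustified bound. The paper's repair is to work throughout with $\widehat e_\varepsilon$: set $e:=\widehat e|_F$ where $\widehat e_\varepsilon\weaklim\widehat e$ in $L^2(\Omega_T)$, identify the stress limit on $\{z>0\}$ via $W_{,e}^\varepsilon(e_\varepsilon,z_\varepsilon)\mathds 1_{N_\varepsilon}=W_{,e}^\varepsilon(\widehat e_\varepsilon,z_\varepsilon)\mathds 1_{N_\varepsilon}$ with $N_\varepsilon=\{z_\varepsilon>0\}\cap\{z>0\}$, and show $W_{,e}^\varepsilon(e_\varepsilon,z_\varepsilon)\rightarrow 0$ in $L^2(\{z=0\})$ using the $L^\infty(0,T;L^1(\Omega))$ bound on $W^\varepsilon(e_\varepsilon,z_\varepsilon)$ together with $g(z_\varepsilon)+\varepsilon\rightarrow 0$ uniformly on $\{z=0\}$ (Lemma \ref{lemma:convergingSubsequences}(ii)); similarly the $W_{,z}$-term in the variational inequality is first restricted to $F\cap\{z_\varepsilon>0\}$ (allowed since $\zeta\leq 0$) and then treated by lower semicontinuity in $\widehat e_\varepsilon$. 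Your local Korn argument on the covers $U_k^m$ and the treatment of the energy inequality and of $\frak e_0^+$ are sound once rewritten in terms of $\widehat e_\varepsilon$, but as stated the proposal's central a priori estimate, and everything resting on it, does not hold.
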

	
	\begin{remark}
		Let us consider the functions $e$, $u$ and $z$ obtained above in the degenerate limit.
		We do not know that $F=\{z>0\}$ equals $\frak A_D(\{z>0\})$ and, if $F\setminus\frak A_D(\{z>0\})\neq\emptyset$, it
		is not clear whether $u$ can be extended such that $e=\epsilon(u)$ also holds in $F$.
		On the other hand, we would like to stress that $(u,z^\star)$ with the truncated function $z^\star:=z\mathds 1_{\frak A_D(\{z>0\})}$ also do not
		necessarily form a weak solution in the sense of Definition \ref{def:weakSolution}.
		Because $z^\star$ viewed as an $SBV^2(0,T;L^2(\Omega))$-function may have jumps
		which needs to be accounted for in the energy inequality \eqref{eqn:EI}.
		The construction of weak solutions will be performed in Section \ref{section:mainProb}.
	\end{remark}

		Let $(b^0, z_\varepsilon^0)\rightarrow (b^0,z^0)$
		with $z_\varepsilon^0:=(z-\delta_\varepsilon)^+$ and $b^0:=b(0)$ be a recovery sequence of
		$\frak E_\varepsilon \xrightarrow{\Gamma}\frak E$ according to Lemma \ref{cor:gammaLimitRepresentation} (ii).
		A modification of the proof of Theorem 4.6 in \cite{WIAS1520} yields the following result.
		\begin{theorem}[$\varepsilon$-regularized problem - incomplete damage]
		\label{theorem:regProblem}
			Let $\varepsilon>0$.
			For the given initial-boun\-dary
			data $z_\varepsilon^0\in W^{1,p}(\Omega)$ and $b\in W^{1,1}(0,T;W^{1,\infty}(\Omega;\R^n))$
			there exists a pair $q_\varepsilon=(u_\varepsilon,z_\varepsilon)$ such that
			\begin{enumerate}
				\renewcommand{\labelenumi}{(\roman{enumi})}
				\item
					\textit{Trajectory spaces:}
					\begin{align*}
					\begin{aligned}
						&z_\varepsilon\in L^\infty(0,T;W^{1,p}(\Omega))\cap H^1(0,T;L^2(\Omega)),
						&&u_\varepsilon\in L^\infty(0,T;H^1(\Omega;\mathbb R^n)).
					\end{aligned}
					\end{align*}
				\item
				\textit{Quasi-static mechanical equilibrium:}
					\begin{equation}
					\label{eqn:ID3}
						\int_{\Omega} W_{,e}^\varepsilon(\epsilon(u_\varepsilon(t)),z_\varepsilon(t)):\epsilon(\zeta)\,\mathrm dx=0
					\end{equation}
					for a.e. $t\in(0,T)$ and for all $\zeta\in H_D^{1}(\Omega;\mathbb R^n)$. Furthermore, $u_\varepsilon=b$
					on the boundary $D_T$.
				\item
				\textit{Damage one-sided variational inequality: }
					\begin{align}
					\label{eqn:ID4}
						\int_{\Omega}\left(|\nabla z_\varepsilon(t)|^{p-2}\nabla z_\varepsilon(t)\cdot\nabla\zeta
							+W_{,z}^\varepsilon(\epsilon(u_\varepsilon(t)),z_\varepsilon(t))\zeta\right)\dx
						&\geq\int_\Omega \left(\alpha-\beta\partial_t z_\varepsilon(t)-r_\varepsilon(t)\right)\zeta\dx,\\
						z_\varepsilon(t)&\geq 0,\notag\\
						\partial_t z_\varepsilon(t)&\leq 0\notag
					\end{align}
					for a.e. $t\in(0,T)$ and for all $\zeta\in W^{1,p}(\Omega)$ with $\zeta\leq 0$
					where $r_\varepsilon\in L^1(\Omega_T)$ satisfies
					\begin{align*}
						\int_\Omega r_\varepsilon(t)(\xi-z_\varepsilon(t))\dx\leq 0
					\end{align*}
					for a.e. $t\in(0,T)$ and for all $\xi\in W^{1,p}(\Omega)$ with $\xi \geq 0$.
					The initial value is given by $z_\varepsilon(t=0)=z_\varepsilon^0$ in $\ol\Omega$.
			\item
				\textit{Energy inequality: }
					\begin{align}
						& \C E_\varepsilon(\epsilon(u_\varepsilon(t)),z_\varepsilon(t))
							+\int_{\Omega_t}\left(\alpha|\partial_t z_\varepsilon|+\beta |\partial_t z_\varepsilon|^2\right)\dxs\notag\\
					\label{eqn:ID5}
						&\qquad\qquad\leq \C E_\varepsilon(\epsilon(u_\varepsilon^0),z_\varepsilon^0)
							+\int_{\Omega_t} W_{,e}^\varepsilon(\epsilon(u_\varepsilon),z_\varepsilon):\epsilon(\partial_t b)\dxs
					\end{align}
					holds for a.e. $t\in(0,T)$ where $u_\varepsilon^0$ minimizes $ \C E_\varepsilon(\epsilon(\cdot),z_\varepsilon^0)$ in $H^1(\Omega;\R^n)$
					with Dirichlet data $b^0$ on $D$.
				\end{enumerate}
			Moreover, $r_\varepsilon$ in (iv) can be chosen to be
			\begin{align}
			\label{eqn:rstructure}
				r_\varepsilon=-\chi_\varepsilon W_{,z}(\epsilon(u_\varepsilon),z_\varepsilon)
			\end{align}
			with $\chi_\varepsilon\in L^\infty(\Omega)$ fulfilling $\chi_\varepsilon=0$ on $\{z_\varepsilon>0\}$ and
			$0\leq\chi_\varepsilon\leq 1$ on $\{z_\varepsilon=0\}$.
		\end{theorem}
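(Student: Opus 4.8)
The plan is to construct $(u_\varepsilon,z_\varepsilon)$ by the time-incremental (minimizing-movement) scheme underlying the proof of Theorem~4.6 in \cite{HK10a}, adapted to the energy $\C E_\varepsilon$ and the dissipation of \eqref{eqn:freeEnergyFormal}, carrying the reaction term along. The structural point that keeps us in the incomplete-damage regime is $g(z)+\varepsilon\geq\varepsilon>0$: the $\varepsilon$-regularized elastic energy is then uniformly coercive in $\epsilon(u)$ regardless of the damage state, so Korn's inequality (using $\C H^{n-1}(D)>0$) yields the deformation compactness which is exactly what will be lost as $\varepsilon\to0^+$. Concretely, I would fix $M\in\N$, $\tau=T/M$, $t_k=k\tau$, let $u_\varepsilon^0$ minimize $v\mapsto\C E_\varepsilon(\epsilon(v),z_\varepsilon^0)$ over $\{v=b(0)$ on $D\}$ (existence by the direct method, the elastic term being strictly convex and coercive in $\epsilon(v)$), and, given $(u^{k-1},z^{k-1})$, let $(u^k,z^k)$ minimize the functional $\Phi_k$ given by
\[
  \Phi_k(v,\zeta):=\C E_\varepsilon(\epsilon(v),\zeta)+\int_\Omega\Big(-\alpha(\zeta-z^{k-1})+\tfrac\beta\tau|\zeta-z^{k-1}|^2\Big)\dx
\]
over $\{v=b(t_k)$ on $D\}\times\{\zeta\in W^{1,p}(\Omega):0\le\zeta\le z^{k-1}\}$. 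Minimizers exist by the direct method: the $|\nabla\zeta|^p$- and (via $\varepsilon$) the elastic term are coercive, $0\le\zeta\le1$ gives an $L^\infty$-bound, and along a minimizing sequence $W^{1,p}(\Omega)\hookrightarrow\hookrightarrow\C C(\ol\Omega)$ (as $p>n$) forces $g(\zeta_j)+\varepsilon\to g(\zeta)+\varepsilon$ uniformly, so the (not jointly convex) elastic term is weakly lower semicontinuous.

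The first-order conditions for $(u^k,z^k)$ then give the discrete analogues of (ii) and (iii): varying $v$ (free up to the Dirichlet datum) gives $\int_\Omega W_{,e}^\varepsilon(\epsilon(u^k),z^k):\epsilon(\zeta)\dx=0$ for $\zeta\in H^1_D(\Omega;\R^n)$; varying $\zeta$ downward inside the obstacle constraint, and introducing a Lagrange reaction $r^k\le0$ for the constraint $\zeta\ge0$, supported in $\{z^k=0\}$, gives the discrete variational inequality with right-hand side $\alpha-\tfrac{2\beta}\tau(z^k-z^{k-1})-r^k$. Evaluating the first equation on $\{z^k=0\}$ one writes $r^k=-\chi^k W_{,z}(\epsilon(u^k),z^k)$ with $0\le\chi^k\le1$ and $\chi^k=0$ on $\{z^k>0\}$; recall that $W_{,z}^\varepsilon=W_{,z}$ is independent of $\varepsilon$ and $W_{,z}(\cdot,0)\ge0$ by \eqref{eqn:assumptiong}.

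For the a priori estimates I would use the minimality $\Phi_k(u^k,z^k)\le\Phi_k(\widehat u^k,z^{k-1})$ with the competitor $\widehat u^k$ solving the $z^{k-1}$-elliptic problem with datum $b(t_k)$ (admissible, the dissipation terms vanishing for $\zeta=z^{k-1}$) and then compare $\widehat u^k$ with $u^{k-1}+(b(t_k)-b(t_{k-1}))$; this yields a discrete energy inequality with error $C\sum_j\|b(t_j)-b(t_{j-1})\|_{H^1}^2$, and a discrete Gronwall argument (using $b\in W^{1,1}(0,T;W^{1,\infty})$, so $\sum_j\|b(t_j)-b(t_{j-1})\|_{W^{1,\infty}}$ is bounded and $\max_j\|b(t_j)-b(t_{j-1})\|\to0$) bounds $\C E_\varepsilon(\epsilon(u^k),z^k)$, hence $\|z^k\|_{W^{1,p}}$, $\|u^k\|_{H^1}$ (Korn, $\varepsilon$), and $\tau^{-1}\sum_j\|z^j-z^{j-1}\|_{L^2}^2$, all uniformly in $\tau$. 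Passing to the piecewise-constant interpolants $\ol u_\tau,\ol z_\tau$ and the piecewise-affine $\widehat z_\tau$, I extract subsequences with $\ol z_\tau,\widehat z_\tau\weakstarlim z$ in $L^\infty(0,T;W^{1,p})$, $\widehat z_\tau\weaklim z$ in $H^1(0,T;L^2)$ and, by Aubin--Lions, $z_\tau\to z$ in $\C C([0,T];\C C(\ol\Omega))$; for a.e.\ $t$ the limiting elliptic problem with coefficient $g(z(t))+\varepsilon$ and datum $b(t)$ has a unique solution, which I call $u(t)$, and $\ol u_\tau(t)\weaklim u(t)$ in $H^1(\Omega;\R^n)$ with $\|u(t)\|_{H^1}\le C$, so that $u\in L^\infty(0,T;H^1(\Omega;\R^n))$ and $u=b$ on $D_T$; monotonicity of the increments gives $z\ge0$ and $\partial_t z\le0$.

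The hard part will be the limit passage in the damage variational inequality, which requires first upgrading $\epsilon(\ol u_\tau(t))\weaklim\epsilon(u(t))$ to strong $L^2$-convergence, since $W_{,z}=\tfrac12 g'(z)\mathbb C\epsilon(u):\epsilon(u)$ is quadratic in the strain; I would obtain this by testing the difference of the elliptic equations for $\ol u_\tau(t)$ and $u(t)$ with $\ol u_\tau(t)-u(t)$ and using the uniform coercivity $g(\cdot)+\varepsilon\ge\varepsilon$, precisely the estimate that breaks in the degenerate limit. With this, (ii) passes to the limit (being affine in $\epsilon(u)$); in (iii) the linear terms are immediate, the $p$-Laplacian term $|\nabla z|^{p-2}\nabla z$ is handled by rewriting the inequality via the convexity of $\zeta\mapsto\int_\Omega\tfrac1p|\nabla\zeta|^p$ (so only weak $W^{1,p}$-convergence and lower semicontinuity are used), and the $r^k$ (bounded in $L^1$, and, by their explicit form together with the strong strain convergence, actually convergent) pass to a limit $r_\varepsilon=-\chi_\varepsilon W_{,z}(\epsilon(u_\varepsilon),z_\varepsilon)$ with $\int_\Omega r_\varepsilon(\xi-z_\varepsilon)\dx\le0$ for $\xi\ge0$. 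Finally (iv) follows from weak lower semicontinuity of $\C E_\varepsilon$ and of the dissipation functional together with convergence of the boundary-work term, with $\C E_\varepsilon(\epsilon(u_\varepsilon^0),z_\varepsilon^0)$ recovered as $t\to0^+$; setting $(u_\varepsilon,z_\varepsilon):=(u,z)$ gives all the asserted properties.
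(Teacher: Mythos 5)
The paper itself does not carry out a proof of Theorem \ref{theorem:regProblem}: it is obtained by the single remark that ``a modification of the proof of Theorem 4.6 in \cite{HK10a} yields the result''. Your time-incremental reconstruction (joint minimization of energy plus discrete dissipation, well-posedness of each step via $g(\zeta)+\varepsilon\geq\varepsilon$, the compact embedding $W^{1,p}(\Omega)\hookrightarrow\C C(\ol\Omega)$ and Korn's inequality, then a priori bounds, interpolants, strong strain convergence from uniform ellipticity, and the convexity rewriting of the $p$-Laplacian term so that only weak $W^{1,p}$-convergence is needed) is exactly the family of arguments behind that citation, and as a sketch it is essentially sound; you also correctly isolate that uniform coercivity in $\epsilon(u)$ is what is lost at $\varepsilon=0$.

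Two points need repair before this matches the statement as written. First, a calibration issue: with the discrete dissipation $\tfrac{\beta}{\tau}|\zeta-z^{k-1}|^2$ the Euler--Lagrange inequality carries $\tfrac{2\beta}{\tau}(z^k-z^{k-1})$, so your limit satisfies \eqref{eqn:ID4} with $2\beta\,\partial_t z_\varepsilon$ instead of $\beta\,\partial_t z_\varepsilon$, while the minimality-based energy estimate gives \eqref{eqn:ID5} with $\beta|\partial_t z_\varepsilon|^2$. The pair (VI with $\beta\partial_t z$, EI with $\beta|\partial_t z|^2$) stated in the theorem corresponds to the potential $\tfrac\beta2|\dot z|^2$ together with its dissipation \emph{rate} $\beta|\dot z|^2$ (this is also what the comparison argument in Theorem \ref{theorem:weakImpliesClassical} needs); so you should take $\tfrac{\beta}{2\tau}|\zeta-z^{k-1}|^2$ in $\Phi_k$ and then derive the energy inequality not from bare minimality (which only yields $\tfrac\beta2|\partial_t z|^2$) but by additionally testing the discrete variational inequality with the increment $z^k-z^{k-1}\leq 0$ (or by a De Giorgi/chain-rule argument), controlling the non-convexity of $g$ in that step. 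Second, the representation \eqref{eqn:rstructure} is not obtained by ``evaluating the first equation on $\{z^k=0\}$'': the mechanical equilibrium says nothing about the damage reaction. What is needed is the complementarity information that on the contact set the multiplier is pinched, $-W_{,z}(\epsilon(u^k),z^k)\leq r^k\leq 0$, which is the extra structural fact (beyond a generic obstacle-problem multiplier) that Lemma \ref{lemma:aprioriEstimatesI}(i) and the degenerate limit in Lemma \ref{lemma:convergingSubsequences} rely on; this pinching has to be proved at the discrete level (e.g.\ by a comparison/truncation argument in the minimization step) and then shown to survive the passage $\tau\to 0$, for which your observation that $W_{,z}(\epsilon(\ol u_\tau),\ol z_\tau)$ converges strongly in $L^1$ is indeed the right ingredient.
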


		We consider a sequence $\{\varepsilon_M\}_{M\in\N}\subseteq(0,1)$ with $\varepsilon_M\rightarrow 0^+$ as $M\rightarrow \infty$
		and for every $M\in\N$ a weak solution $(u_{\varepsilon_M}, z_{\varepsilon_M})$
		of the incomplete damage problem according to Theorem \ref{theorem:regProblem}.
		The index $M$ is omitted in the following.
		We agree that $e_\varepsilon:=\epsilon(u_\varepsilon)$ denotes the strain
		of the regularized system.
		Our further analysis makes also use of the truncated strain $\widehat e_\varepsilon$ (the strain on the not completely damaged parts of $\Omega$)
		given by
		\begin{align*}
			\widehat e_\varepsilon:=e_\varepsilon\mathds{1}_{\{z_\varepsilon>0\}}.
		\end{align*}
		We proceed by deriving suitable a-priori estimates for the incomplete damage problem with respect to $\varepsilon$.
		\begin{lemma}[A-priori estimates]
		\label{lemma:aprioriEstimatesI}
			There exists a $C>0$ independent of $\varepsilon$ such that\\\\
			\begin{tabular}{ll}
				\begin{minipage}{20.0em}
					\begin{enumerate}
						\renewcommand{\labelenumi}{(\roman{enumi})}
						\item
							$\|\widehat e_\varepsilon\|_{L^2(\Omega_T;\mathbb R^{n\times n})}\leq C$,
						\item
							$\sup_{t\in[0,T]}\|z_\varepsilon(t)\|_{W^{1,p}(\Omega)}\leq C$,
					\end{enumerate}
				\end{minipage}
				&
				\begin{minipage}{25em}
					\begin{enumerate}
						\renewcommand{\labelenumi}{(\roman{enumi})}
						\item[(iii)]
							$\|\partial_t z_\varepsilon\|_{L^2(\Omega_T)}\leq C$,
						\item[(iv)]
							$\|W^\varepsilon(e_\varepsilon,z_\varepsilon)\|_{L^\infty(0,T;L^1(\Omega))}\leq C$.
					\end{enumerate}
				\end{minipage}
			\end{tabular}
		\end{lemma}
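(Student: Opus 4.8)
\emph{Plan.} I would obtain all four bounds from the three relations of Theorem~\ref{theorem:regProblem} --- the equilibrium \eqref{eqn:ID3}, the damage inequality \eqref{eqn:ID4} and the energy inequality \eqref{eqn:ID5} --- the single preparation being a uniform bound on the initial energy. Since $u_\varepsilon^0$ minimises $\C E_\varepsilon(\epsilon(\cdot),z_\varepsilon^0)$ with Dirichlet datum $b^0=b(0)$, and $z_\varepsilon^0=(z^0-\delta_\varepsilon)^+$ gives $\nabla z_\varepsilon^0=\nabla z^0\,\mathds 1_{\{z^0>\delta_\varepsilon\}}$ and $0\le z_\varepsilon^0\le z^0\le 1$, we have
\[
\C E_\varepsilon(\epsilon(u_\varepsilon^0),z_\varepsilon^0)\le\C E_\varepsilon(\epsilon(b^0),z_\varepsilon^0)\le\tfrac1p\|\nabla z^0\|_{L^p(\Omega)}^p+C\|b^0\|_{W^{1,\infty}(\Omega)}^2=:C_0
\]
uniformly in $\varepsilon$ (equivalently, $\C E_\varepsilon(\epsilon(u_\varepsilon^0),z_\varepsilon^0)=\frak E_\varepsilon(b^0,z_\varepsilon^0)\to\frak E(b^0,z^0)<\infty$ as $(b^0,z_\varepsilon^0)$ is a recovery sequence, Corollary~\ref{cor:gammaLimitRepresentation}(ii)).

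\emph{Estimates (ii)--(iv).} Set $\langle a,b\rangle_x:=(g(z_\varepsilon(x))+\varepsilon)\,\mathbb C a:b$, a pointwise inner product on $\R_\mathrm{sym}^{n\times n}$ with $\langle\epsilon(v),\epsilon(v)\rangle_x=2W^\varepsilon(\epsilon(v),z_\varepsilon(x))$. Testing \eqref{eqn:ID3} with $\zeta=u_\varepsilon(t)-b(t)\in H_D^1(\Omega;\R^n)$ and using Cauchy--Schwarz for $\langle\cdot,\cdot\rangle$ gives, for a.e.\ $t$,
\[
\int_\Omega W^\varepsilon(\epsilon(u_\varepsilon(t)),z_\varepsilon(t))\dx\le\int_\Omega W^\varepsilon(\epsilon(b(t)),z_\varepsilon(t))\dx\le C\|b(t)\|_{W^{1,\infty}(\Omega)}^2,
\]
which, together with $b\in W^{1,1}(0,T;W^{1,\infty}(\Omega;\R^n))\hookrightarrow\C C([0,T];W^{1,\infty}(\Omega;\R^n))$, yields (iv). The same Cauchy--Schwarz estimate (first pointwise, then in $L^2(\Omega)$) bounds the driving term of \eqref{eqn:ID5} at time $s$ by $2\big(\int_\Omega W^\varepsilon(\epsilon(u_\varepsilon(s)),z_\varepsilon(s))\dx\big)^{1/2}\big(\int_\Omega W^\varepsilon(\epsilon(\partial_t b(s)),z_\varepsilon(s))\dx\big)^{1/2}$; by (iv) the first factor is $\le C\|b\|_{\C C([0,T];W^{1,\infty})}$ and, as $g$ is bounded, the second is $\le C\|\partial_t b(s)\|_{W^{1,\infty}(\Omega)}$, so
\[
\Big|\int_{\Omega_t}W^\varepsilon_{,e}(\epsilon(u_\varepsilon),z_\varepsilon):\epsilon(\partial_t b)\dxs\Big|\le C\,\|b\|_{\C C([0,T];W^{1,\infty})}\,\|\partial_t b\|_{L^1(0,T;W^{1,\infty})}.
\]
Inserting this and $C_0$ into \eqref{eqn:ID5}: discarding the non-negative term $\C E_\varepsilon(\epsilon(u_\varepsilon(t)),z_\varepsilon(t))$ leaves $\beta\|\partial_t z_\varepsilon\|_{L^2(\Omega_T)}^2\le C$, i.e.\ (iii); keeping instead the free-energy part gives $\tfrac1p\|\nabla z_\varepsilon(t)\|_{L^p(\Omega)}^p\le\C E_\varepsilon(\epsilon(u_\varepsilon(t)),z_\varepsilon(t))\le C$ for a.e.\ $t$, which with $0\le z_\varepsilon(t)\le1$ (from $\partial_t z_\varepsilon\le0$ and $0\le z_\varepsilon^0\le1$) yields (ii).

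\emph{Estimate (i), the main obstacle.} The energy inequality only controls $\int_{\Omega_T}g(z_\varepsilon)\,\mathbb C e_\varepsilon:e_\varepsilon$, which degenerates where $z_\varepsilon$ is small; I expect (i) to need the separate device of testing \eqref{eqn:ID4} with the admissible constant $\zeta\equiv-1$. Using $W^\varepsilon_{,z}=W_{,z}$, the structure $r_\varepsilon=-\chi_\varepsilon W_{,z}(\epsilon(u_\varepsilon),z_\varepsilon)$ with $0\le\chi_\varepsilon\le1$ and $\chi_\varepsilon=0$ on $\{z_\varepsilon>0\}$ from \eqref{eqn:rstructure}, and $\partial_t z_\varepsilon\le0$, this test gives, for a.e.\ $t$,
\[
\int_\Omega W_{,z}(\epsilon(u_\varepsilon(t)),z_\varepsilon(t))\,(1-\chi_\varepsilon(t))\dx\le\alpha|\Omega|+\beta\int_\Omega|\partial_t z_\varepsilon(t)|\dx.
\]
Now $1-\chi_\varepsilon\ge\mathds 1_{\{z_\varepsilon>0\}}$, and by \eqref{eqn:assumptiong} $W_{,z}(e,z)=\tfrac12 g'(z)\,\mathbb C e:e\ge\tfrac{\eta c_0}{2}|e|^2$ with $c_0>0$ such that $\mathbb C a:a\ge c_0|a|^2$; hence the left-hand side dominates $\tfrac{\eta c_0}{2}\int_\Omega|\widehat e_\varepsilon(t)|^2\dx$, and integrating in $t$ with $\|\partial_t z_\varepsilon\|_{L^1(\Omega_T)}\le(T|\Omega|)^{1/2}\|\partial_t z_\varepsilon\|_{L^2(\Omega_T)}$ and (iii) gives $\|\widehat e_\varepsilon\|_{L^2(\Omega_T;\R^{n\times n})}\le C$. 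I expect this to be the delicate point: there can be no uniform bound on $e_\varepsilon$ itself (that would exclude complete damage), and the estimate relies on exploiting the \emph{strict} lower bound $g'\ge\eta$ (not a lower bound on $g$) and the support condition $\chi_\varepsilon=0$ on $\{z_\varepsilon>0\}$, which is precisely what isolates the truncated strain $\widehat e_\varepsilon=e_\varepsilon\mathds 1_{\{z_\varepsilon>0\}}$.
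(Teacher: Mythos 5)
Your argument is correct, and for the key estimate (i) it is essentially the paper's proof: test \eqref{eqn:ID4} with $\zeta\equiv-1$, use the structure \eqref{eqn:rstructure} of $r_\varepsilon$ (so that the term on $\{z_\varepsilon=0\}$ is absorbed and only the truncated strain survives), and exploit $g'\geq\eta$ together with the positive definiteness of $\mathbb C$ and the already established bound (iii) on $\partial_t z_\varepsilon$. Where you deviate is in (ii)--(iv): the paper obtains (iii), (iv) and the uniform energy bound $\C E_\varepsilon(e_\varepsilon(t),z_\varepsilon(t))\leq C$ in one stroke by applying Gronwall's lemma to \eqref{eqn:ID5} (with the bounded initial energy, which you justify the same way via minimality of $u_\varepsilon^0$, resp.\ the recovery-sequence property), whereas you first extract (iv) directly from the equilibrium \eqref{eqn:ID3} tested with $\zeta=u_\varepsilon(t)-b(t)$ via the weighted Cauchy--Schwarz inequality, and then use (iv) to bound the work term in \eqref{eqn:ID5} pointwise in time, so that no Gronwall argument is needed. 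This is a legitimate and slightly more self-contained variant: it buys an explicit, Gronwall-free bound on the elastic energy at (almost) every time, at the price of invoking the boundary regularity $b\in W^{1,1}(0,T;W^{1,\infty})\hookrightarrow\C C([0,T];W^{1,\infty})$ twice; the paper's Gronwall route is the more standard one and generalizes more easily when the work term cannot be decoupled from the energy. One small point you gloss over: your energy bound gives $\|z_\varepsilon(t)\|_{W^{1,p}(\Omega)}\leq C$ only for a.e.\ $t$, while (ii) asserts the bound for \emph{every} $t\in[0,T]$; as in the paper, you should close this by combining the $L^\infty(0,T;W^{1,p})$-bound with (iii) (continuity of $t\mapsto z_\varepsilon(t)$ in $L^2(\Omega)$ plus weak lower semicontinuity of the $W^{1,p}$-norm along $t_k\to t$). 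This is routine, not a genuine gap.
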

		\begin{proof}
			Applying Gronwall's lemma to the energy estimate \eqref{eqn:ID5} and noticing the boundedness of
			$\C E_\varepsilon(\epsilon(u_\varepsilon^0),z_\varepsilon^0)$ with respect to
			$\varepsilon\in(0,1)$ show (iii) and
			\begin{align}
			\label{eqn:EnergyBoundedness}
				& \C E_\varepsilon(e_\varepsilon(t),z_\varepsilon(t))\leq C
			\end{align}
			for a.e. $t\in(0,T)$ and all $\varepsilon\in(0,1)$ (cf. \cite{WIAS1520})
			and in particular (iv).
			Taking the restriction $0\leq z_\varepsilon\leq 1$
			into account, property \eqref{eqn:EnergyBoundedness} gives rise to
			$\|z_\varepsilon\|_{L^\infty(0,T;W^{1,p}(\Omega))}\leq C$.
			Together with the control of the time-derivative (iii), we obtain boundedness of $\|z_\varepsilon(t)\|_{W^{1,p}(\Omega)}\leq C$ for
			every $t\in[0,T]$ and $\varepsilon\in(0,1)$. Hence, (ii) is proven.
			
			It remains to show (i). To proceed, we test inequality \eqref{eqn:ID4} with $\zeta\equiv-1$ and integrate from $t=0$ to $t=T$:
			\begin{align}
			\label{eqn:VIestimate1}
				\int_{\Omega_T}W_{,z}^\varepsilon(e_\varepsilon,z_\varepsilon)
				+r_\varepsilon\dxt
				\leq\int_{\Omega_T}\left(\alpha-\beta\,\partial_t z_\varepsilon\right)\dxt.
			\end{align}
			Applying \eqref{eqn:assumptiong}, \eqref{eqn:rstructure} and \eqref{eqn:VIestimate1}, yield
			\begin{align*}
				\int_{\Omega_T}\eta|\widehat e_\varepsilon|^2\dxt
				&=\int_{\{z_\varepsilon>0\}}\eta|e_\varepsilon|^2\dxt\\
				&\leq C\int_{\{z_\varepsilon>0\}}\frac12 g'(z_\varepsilon)\mathbb C e_\varepsilon:e_\varepsilon\dxt\\
				&=C\int_{\Omega_T}
					W_{,z}^\varepsilon(e_\varepsilon,z_\varepsilon)\dxt-\int_{\{z_\varepsilon=0\}}W_{,z}^\varepsilon(e_\varepsilon,z_\varepsilon)\dxt\\
				&\leq C\int_{\Omega_T}
					W_{,z}^\varepsilon(e_\varepsilon,z_\varepsilon)\dxt-\int_{\Omega_T}\chi_\varepsilon W_{,z}^\varepsilon(e_\varepsilon,z_\varepsilon)\dxt\\
				&=C\int_{\Omega_T}\left(W_{,z}^\varepsilon(e_\varepsilon,z_\varepsilon)+r_\varepsilon(t)\right)\dxt\\
				&\leq C\int_{\Omega_T}\left(\alpha-\beta\,\partial_t z_\varepsilon\right)\dxt.
			\end{align*}
			This and the boundedness of $\int_{\Omega_T}\left(\alpha-\beta\,\partial_t z_\varepsilon\right)\dxt$ with respect to $\varepsilon$ shows (i).
			\ep
		\end{proof}
		
			\begin{lemma}[Converging subsequences]
			\label{lemma:convergingSubsequences}
				There exists functions
				\begin{align*}
				\begin{aligned}
					&\widehat e\in L^2(\Omega_T;\mathbb R^{n\times n}),
					&&z\in L^\infty(0,T;W^{1,p}(\Omega))\cap H^1(0,T;L^2(\Omega)),
				\end{aligned}
				\end{align*}
				where $z$ is monotonically decreasing with respect to $t$, i.e. $\partial_t z\leq 0$,
				and a subsequence (we omit the index) such that for $\varepsilon\rightarrow 0^+$\\
				
				\begin{tabular}{ll}
					\hspace{-2em}
					\begin{minipage}{20em}
						{\begin{enumerate}
							\item[(i)]
								$z_\varepsilon\rightharpoonup z\text{ in }H^1(0,T;L^2(\Omega))$,\\
								$z_\varepsilon\rightarrow z\text{ in }L^p(0,T;W^{1,p}(\Omega))$,\\
								$z_\varepsilon(t)\weaklim z(t)\text{ in }W^{1,p}(\Omega)$,\\
								$z_\varepsilon\rightarrow z\text{ in } \ol{\Omega_T}$,
						\end{enumerate}}
					\end{minipage}
					&\hspace{-1.5em}
					\begin{minipage}{26em}
						{\begin{enumerate}
							\item[(ii)]
								$\widehat e_{\varepsilon}\rightharpoonup \widehat e\text{ in }L^2(\Omega_T;\mathbb R^{n\times n})$,\\
								$W_{,e}^\varepsilon(e_\varepsilon,z_\varepsilon)\weaklim W_{,e}(\widehat e,z)$
								in $L^2(\{z>0\};\mathbb R^{n\times n})$,\\
								$W_{,e}^\varepsilon(e_\varepsilon,z_\varepsilon)\rightarrow 0$
								in $L^2(\{z=0\};\mathbb R^{n\times n})$.\\\vspace*{0.3em}
						\end{enumerate}}
					\end{minipage}
				\end{tabular}
			\end{lemma}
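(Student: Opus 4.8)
\emph{The plan is a compactness--plus--Minty argument resting entirely on the uniform bounds of Lemma~\ref{lemma:aprioriEstimatesI}.} By Lemma~\ref{lemma:aprioriEstimatesI}(ii)--(iii) the family $\{z_\varepsilon\}$ is bounded in $L^\infty(0,T;W^{1,p}(\Omega))\cap H^1(0,T;L^2(\Omega))$, and by (i) the family $\{\widehat e_\varepsilon\}$ is bounded in $L^2(\Omega_T;\R^{n\times n})$. Using reflexivity I would first pass to a subsequence with $z_\varepsilon\weaklim z$ in $H^1(0,T;L^2(\Omega))$, $z_\varepsilon\weakstarlim z$ in $L^\infty(0,T;W^{1,p}(\Omega))$ and $\widehat e_\varepsilon\weaklim\widehat e$ in $L^2(\Omega_T;\R^{n\times n})$; since $\{v\in L^2(\Omega_T):v\le 0\}$ is convex and closed, the bound $\partial_t z_\varepsilon\le 0$ survives in the weak limit, so $z$ is monotonically decreasing in $t$. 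Because $p>n$, the embedding $W^{1,p}(\Omega)\hookrightarrow\C C^{0,\gamma}(\ol\Omega)$ is compact for $0<\gamma<1-n/p$, so the Aubin--Lions--Simon lemma (with $z_\varepsilon$ bounded in $L^\infty(0,T;W^{1,p}(\Omega))$, $\partial_t z_\varepsilon$ bounded in $L^2(0,T;L^2(\Omega))$) yields, after a further subsequence, $z_\varepsilon\to z$ in $\C C([0,T];\C C^{0,\gamma'}(\ol\Omega))$ for $\gamma'<\gamma$, in particular $z_\varepsilon\to z$ uniformly on $\ol{\Omega_T}$. Finally, for each fixed $t$ the bounded sequence $\{z_\varepsilon(t)\}\subset W^{1,p}(\Omega)$ has the property that any weakly convergent subsequence of it converges in $\C C(\ol\Omega)$ to $z(t)$, whence $z_\varepsilon(t)\weaklim z(t)$ in $W^{1,p}(\Omega)$ for every $t$, and $\sup_t\|z(t)\|_{W^{1,p}(\Omega)}\le C$.

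\emph{Strong convergence of $\nabla z_\varepsilon$ --- the technically most involved step.} We already have $\nabla z_\varepsilon\weaklim\nabla z$ in $L^p(\Omega_T;\R^n)$ and, along a further subsequence, $|\nabla z_\varepsilon|^{p-2}\nabla z_\varepsilon\weaklim\chi$ in $L^{p'}(\Omega_T;\R^n)$. Starting from
\[
\int_{\Omega_T}\bigl(|\nabla z_\varepsilon|^{p-2}\nabla z_\varepsilon-|\nabla z|^{p-2}\nabla z\bigr)\cdot(\nabla z_\varepsilon-\nabla z)\dxt\ge 0,
\]
strong convergence $\nabla z_\varepsilon\to\nabla z$ in $L^p(\Omega_T)$ follows from the strict monotonicity of $\xi\mapsto|\xi|^{p-2}\xi$ once one shows $\limsup_{\varepsilon\to 0^+}\int_{\Omega_T}|\nabla z_\varepsilon|^p\dxt\le\int_{\Omega_T}\chi\cdot\nabla z\dxt$. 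To obtain this I would test \eqref{eqn:ID4} with the admissible function $\zeta=z(t)-\delta_\varepsilon-z_\varepsilon(t)\le 0$, where $\delta_\varepsilon:=\|z_\varepsilon-z\|_{\C C(\ol{\Omega_T})}\to 0$ by the first step. The key structural observation is that the zeroth--order part of \eqref{eqn:ID4} combines, by \eqref{eqn:rstructure}, into $W_{,z}^\varepsilon(\epsilon(u_\varepsilon),z_\varepsilon)+r_\varepsilon=(1-\chi_\varepsilon)W_{,z}^\varepsilon(\epsilon(u_\varepsilon),z_\varepsilon)\ge 0$, which is bounded in $L^1(\Omega_T)$ (cf.~\eqref{eqn:VIestimate1}), so one never has to deal with the individually unbounded penalty $r_\varepsilon$. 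Rearranging the tested inequality and integrating in time, $\int_{\Omega_T}|\nabla z_\varepsilon|^p\dxt$ is then bounded above by $\int_{\Omega_T}|\nabla z_\varepsilon|^{p-2}\nabla z_\varepsilon\cdot\nabla z\dxt$ (which converges to $\int_{\Omega_T}\chi\cdot\nabla z\dxt$), plus $\int_{\Omega_T}(1-\chi_\varepsilon)W_{,z}^\varepsilon(\epsilon(u_\varepsilon),z_\varepsilon)\,(z-\delta_\varepsilon-z_\varepsilon)\dxt\le 0$, plus an $\alpha$--term that vanishes by strong $L^1$--convergence $z_\varepsilon\to z$ and a $\beta\partial_t z_\varepsilon$--term that vanishes by the weak--strong pairing of $\partial_t z_\varepsilon\weaklim\partial_t z$ against $z-\delta_\varepsilon-z_\varepsilon\to 0$ in $L^2(\Omega_T)$. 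This gives the desired $\limsup$--bound, hence $\nabla z_\varepsilon\to\nabla z$ in $L^p(\Omega_T)$, and together with $z_\varepsilon\to z$ in $L^p(\Omega_T)$ also $z_\varepsilon\to z$ in $L^p(0,T;W^{1,p}(\Omega))$. I expect the bookkeeping of the sign of each term and the admissibility of $\zeta$ to be the delicate part here.

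\emph{Convergence of the regularised stress.} Writing $W_{,e}^\varepsilon(e,z)=(g(z)+\varepsilon)\C C e$, on $\{z=0\}$ the uniform convergence $z_\varepsilon\to z$ and $g\in\C C^1([0,1];\R^+)$ give $\|g(z_\varepsilon)+\varepsilon\|_{L^\infty(\{z=0\})}\to 0$; combined with the pointwise estimate $|W_{,e}^\varepsilon(e_\varepsilon,z_\varepsilon)|^2\le C\,(g(z_\varepsilon)+\varepsilon)\,W^\varepsilon(e_\varepsilon,z_\varepsilon)$ and Lemma~\ref{lemma:aprioriEstimatesI}(iv) this forces $W_{,e}^\varepsilon(e_\varepsilon,z_\varepsilon)\to 0$ in $L^2(\{z=0\};\R^{n\times n})$. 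On $\{z>0\}$, every open $U\compact\{z>0\}$ satisfies $z_\varepsilon\ge\tfrac12\min_{\ol U}z>0$ on $U$ for $\varepsilon$ small, hence $e_\varepsilon=\widehat e_\varepsilon$ there while $g(z_\varepsilon)+\varepsilon\to g(z)$ in $\C C(\ol U)$; the product of the weakly convergent $\widehat e_\varepsilon\weaklim\widehat e$ in $L^2(U)$ with this uniformly convergent scalar factor converges weakly in $L^2(U;\R^{n\times n})$ to $g(z)\C C\widehat e=W_{,e}(\widehat e,z)$. Since $\{z>0\}$ is exhausted by such sets and $\{W_{,e}^\varepsilon(e_\varepsilon,z_\varepsilon)\}$ is bounded in $L^2(\{z>0\})$, we get $W_{,e}^\varepsilon(e_\varepsilon,z_\varepsilon)\weaklim W_{,e}(\widehat e,z)$ in $L^2(\{z>0\};\R^{n\times n})$. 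The finitely many successive extractions produce a single subsequence along which all the asserted convergences hold.
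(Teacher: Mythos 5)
Your proposal is correct and follows essentially the same route as the paper's proof: compactness from the a-priori estimates via Aubin--Lions--Simon, strong convergence of $\nabla z_\varepsilon$ by testing \eqref{eqn:ID4} with a nonpositive perturbation of $z-z_\varepsilon$ combined with the $p$-monotonicity inequality, and the splitting of the stress limit into $\{z>0\}$ (weak limit times uniformly convergent factor) and $\{z=0\}$ (energy bound from Lemma \ref{lemma:aprioriEstimatesI}(iv)). The only minor variations are that you discard the combined nonnegative term $(1-\chi_\varepsilon)W_{,z}$ by its sign with the shifted test function $z-\delta_\varepsilon-z_\varepsilon$, whereas the paper picks $0\leq\zeta_\varepsilon\leq z_\varepsilon$ so that $r_\varepsilon\zeta\equiv 0$, and you identify the limit stress on $\{z>0\}$ by compact exhaustion instead of the paper's indicator argument with $N_\varepsilon=\{z_\varepsilon>0\}\cap\{z>0\}$ --- both are harmless reformulations of the same idea.
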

			\begin{proof}
				The a-priori estimates from Lemma \ref{lemma:aprioriEstimatesI} and classical compactness theorems as well as
				compactness theorems from J.-L. Lions and T. Aubin yield \cite{Simon}
				
				\begin{tabular}{ll}
						\hspace*{-4.2em}
					\begin{minipage}{21.7em}
						{\begin{align*}
							&z_\varepsilon\stackrel{\star}{\rightharpoonup} z\text{ in }L^\infty(0,T;W^{1,p}(\Omega)),\\
							&z_\varepsilon\rightharpoonup z\text{ in }H^1(0,T;L^2(\Omega)),\\
							&z_\varepsilon\rightarrow z\text{ in }L^p(\Omega_T)
						\end{align*}}
					\end{minipage}
					&
					\begin{minipage}{20em}
						{\begin{align*}
							&\widehat e_{\varepsilon}\rightharpoonup \widehat e\text{ in }L^2(\Omega_T;\mathbb R^{n\times n}),\\
							&W_{,e}^\varepsilon(e_\varepsilon,z_\varepsilon)\weaklim w_e\text{ in }
								L^2(\Omega_T;\mathbb R^{n\times n}),\\
						\end{align*}}
					\end{minipage}
				\end{tabular}\\
				as $\varepsilon\rightarrow 0^+$ for a subsequence and appropriate functions $w_e$, $\widehat e$ and $z$.

				Proving the strong convergence of $\nabla z_\varepsilon$ in $L^p(\Omega_T;\R^n)$ does not substantially differ
				from the proof presented in \cite{WIAS1520}.
				It is essentially based on the elementary inequality
				\begin{equation*}
					C_\mathrm{uc}|x-y|^p \le \left\langle\big(|x|^{p-2} x - |y|^{p-2}y \big),
					x-y\right\rangle,
				\end{equation*}
				where $\langle\cdot,\cdot\rangle$ denotes the standard Euclidean scalar product
				and on an approximation scheme $\{\zeta_\varepsilon\}\subseteq L^p(0,T;W^{1,p}(\Omega))$ with $\zeta_\varepsilon\geq 0$ and
				\begin{subequations}
					\begin{align}
						\label{eqn:approxConvergence}
						&\zeta_\varepsilon\rightarrow z\text{ in }L^p(0,T;W^{1,p}(\Omega))\text{ as }\varepsilon\rightarrow 0^+,\\
						\label{eqn:approxRestriction}
						&0\leq \zeta_\varepsilon\leq z_\varepsilon\text{ a.e. in }\Omega_T\text{ for all }\varepsilon\in(0,1).
					\end{align}
				\end{subequations}
				Using the above properties, we obtain the estimate:
				\begin{align*}
					C_\mathrm{uc}\int_{\Omega_T}|\nabla z_\varepsilon-\nabla z|^p\dxt
					&\leq\int_{\Omega_T}(|\nabla z_\varepsilon|^{p-2}\nabla z_\varepsilon
						-|\nabla z|^{p-2}\nabla z)\cdot \nabla(z_\varepsilon-z)\dxt\\
					&=\underbrace{\int_{\Omega_T}|\nabla z_\varepsilon|^{p-2}\nabla z_\varepsilon
						\cdot \nabla(z_\varepsilon-\zeta_\varepsilon)\dxt}_{A_\varepsilon}\\
					&\quad+\underbrace{\int_{\Omega_T}|\nabla z_\varepsilon|^{p-2}\nabla z_\varepsilon
						\cdot \nabla(\zeta_\varepsilon-z)-|\nabla z|^{p-2}\nabla z
						\cdot \nabla(z_\varepsilon-z)\dxt}_{B_\varepsilon}.
				\end{align*}
				The weak convergence property of $\{\nabla z_\varepsilon\}$ in $L^p(\Omega_T)$ and \eqref{eqn:approxConvergence}
				show $B_\varepsilon\rightarrow 0$ as $\varepsilon\rightarrow 0^+$.
				Property \eqref{eqn:ID4} tested with $\zeta(t)=\zeta_\varepsilon(t)-z_\varepsilon(t)$
				and integration from $t=0$ to $t=T$
				yields
				\begin{align*}
					A_\varepsilon\leq{}&
						\underbrace{\int_{\Omega_T}W_{,z}^\varepsilon(\epsilon(u_\varepsilon),z_\varepsilon)
							(\zeta_\varepsilon-z_\varepsilon)\dxt}_{\leq 0\text{ by } \eqref{eqn:assumptiong}\text{ and }\eqref{eqn:approxRestriction}}
							+\underbrace{\int_{\Omega_T}\left(-\alpha+\beta(\partial_t z_\varepsilon(t))\right)(\zeta_\varepsilon-z_\varepsilon)
							\dxt}_{\rightarrow 0\text{ as }\varepsilon\rightarrow 0^+\text{ by }\eqref{eqn:approxConvergence}}.
				\end{align*}
				Here, we have used $r_\varepsilon\zeta=0$ on $\Omega_T$ (see \eqref{eqn:rstructure}).
				Therefore, (i) is also shown.
				
				To prove (ii), we define $N_\varepsilon$ to be $\{z_\varepsilon>0\}\cap\{z>0\}$.
				Consequently, we get
				\begin{align}
					\label{eqn:We}
					&W_{,e}^\varepsilon(\widehat e_\varepsilon,z_\varepsilon)\mathds{1}_{N_\varepsilon}
						=W_{,e}^\varepsilon(e_\varepsilon,z_\varepsilon)\mathds{1}_{N_\varepsilon}
				\end{align}
				and the convergence
				\begin{align}
				\label{eqn:domainConvergence}
					\mathds{1}_{N_\varepsilon}\rightarrow \mathds{1}_{\{z>0\}}\text{ in }\Omega_T
				\end{align}
				for $\varepsilon\rightarrow 0^+$ by using $z_\varepsilon\rightarrow z$ in $\ol{\Omega_T}$.
				Calculating the weak $L^1(\Omega_T;\mathbb R^{n\times n})$-limits in
				\eqref{eqn:We} for $\varepsilon\rightarrow 0^+$ on both sides
				by using the already proven convergence properties,
				we obtain $W_{,e}(\widehat e,z)=w_e$.
				The remaining convergence property in (ii) follow from Lemma \ref{lemma:aprioriEstimatesI} (iv).
				\ep
			\end{proof}
			
			We now introduce the shrinking set $F\subseteq\ol{\Omega_T}$ by defining
			\begin{align*}
				&F(t):=\{z(t)>0\}
			\end{align*}
			for all $t\in[0,T]$. This is a well-defined object since $F\subseteq\ol{\Omega_T}$ is relatively open by Theorem \ref{theorem:continuity} as well as
			$F(s)\subseteq F(t)$ for all $0\leq t\leq s\leq T$ by the monotone decrease of $z(x,\cdot)$.

			\begin{corollary}
			\label{cor:inclusion}
				Let $t\in[0,T]$ and $U\compact F(t)$ be an open subset.
				Then $U\subseteq \{z_\varepsilon(s)>0\}$ for all $s\in[0,t]$ provided that $\varepsilon>0$ is sufficiently small.
				More precisely, there exist $0<\varepsilon_0,\eta<1$ such that
				\begin{align*}
					z_\varepsilon(s)\geq \eta\text{ in }U
				\end{align*}
				for all $s\in[0,t]$ and for all $0<\varepsilon<\varepsilon_0$.
			\end{corollary}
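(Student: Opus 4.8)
The plan is to obtain the uniform positivity of the approximating damage fields $z_\varepsilon$ on $U\times[0,t]$ from two facts that are already available: the uniform convergence $z_\varepsilon\to z$ on $\ol{\Omega_T}$ from Lemma~\ref{lemma:convergingSubsequences}~(i), and the monotone decrease $\partial_t z\leq 0$ of the limit in time.

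First I would fix the constant $\eta$ at the time level $t$. Since $U\compact F(t)=\{z(t)>0\}$, the closure $\ol U$ is compact and contained in $\{z(t)>0\}$; as $z(\cdot,t)\in W^{1,p}(\Omega)\hookrightarrow\C C(\ol\Omega)$ is continuous, it attains a strictly positive minimum there, so I may set $2\eta:=\min_{x\in\ol U}z(x,t)>0$. The bound $0\leq z\leq 1$ forces $\eta\leq\frac12<1$, which gives the required $\eta\in(0,1)$. Next I would propagate this bound backwards in time: using $\partial_t z\leq0$ for the (continuous) representative of $z$, one has $z(x,s)\geq z(x,t)\geq 2\eta$ for every $x\in\ol U$ and every $s\in[0,t]$. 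This is the only place where monotonicity enters, and it is exactly what makes the lower bound uniform over the whole interval $[0,t]$.

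Finally I would transfer the estimate to the regularized system. By the uniform convergence $z_\varepsilon\to z$ on $\ol{\Omega_T}$ there is $\varepsilon_0\in(0,1)$ such that $\|z_\varepsilon-z\|_{\C C(\ol{\Omega_T})}<\eta$ for all $0<\varepsilon<\varepsilon_0$; hence for all $x\in U$, all $s\in[0,t]$ and all $\varepsilon\in(0,\varepsilon_0)$,
$$
	z_\varepsilon(x,s)\;\geq\;z(x,s)-\big|z_\varepsilon(x,s)-z(x,s)\big|\;>\;2\eta-\eta\;=\;\eta\;>\;0 ,
$$
which is the claimed estimate $z_\varepsilon(s)\geq\eta$ on $U$ and in particular $U\subseteq\{z_\varepsilon(s)>0\}$ for every $s\in[0,t]$.

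I do not expect a genuine obstacle here, since every analytic ingredient is already in place; the one point that requires a little care is the order of the construction — one must first extract $\eta$ from the compact slice $\ol U$ at the single time $t$ and only then push the bound down to all earlier $s$ by monotonicity, rather than trying to control $z$ on a noncompact region all at once.
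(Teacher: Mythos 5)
Your proof is correct and follows essentially the same strategy as the paper: fix $\eta$ from the compact set $\ol U\subseteq\{z(t)>0\}$ via continuity of $z(\cdot,t)$, propagate the bound over $[0,t]$ by monotonicity, and transfer it to $z_\varepsilon$ using the convergence from Lemma \ref{lemma:convergingSubsequences}. The only (harmless) difference is the order: the paper first passes to $z_\varepsilon(t)\geq\eta$ on $U$ at the single time slice $t$ (via $z_\varepsilon(t)\weaklim z(t)$ in $W^{1,p}(\Omega)$ and the compact embedding into $\C C(\ol\Omega)$) and then uses the monotone decrease of $z_\varepsilon$ in time, whereas you use the monotone decrease of the limit $z$ together with the uniform convergence $z_\varepsilon\rightarrow z$ on $\ol{\Omega_T}$ — both ingredients are available ($\partial_t z_\varepsilon\leq 0$ from Theorem \ref{theorem:regProblem} and the last convergence in Lemma \ref{lemma:convergingSubsequences} (i)), so either route closes the argument.
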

			\begin{proof}
				By assumption, we obtain the property $\mathrm{dist}(U,\{z(t)=0\})>0$.
				Therefore, and by $z(t)\in\C C(\ol\Omega)$, we find an $\eta>0$ such that $z(t)\geq 2\eta$ in $U$.
				By exploiting the convergence $z_\varepsilon(t)\rightarrow z(t)$ in $\C C(\overline{\Omega})$ as $\varepsilon\rightarrow 0^+$ by Lemma
				\ref{lemma:convergingSubsequences} (b) and the compact embedding $W^{1,p}(\Omega)\hookrightarrow \C C(\ol\Omega)$,
				there exists an $\varepsilon_0>0$ such that $z_\varepsilon(t)\geq \eta$ on $U$ for all $0<\varepsilon<\varepsilon_0$.
				Finally, the claim follows from the fact that $z_\varepsilon$ is monotonically decreasing with respect to $t$.
				\ep
			\end{proof}
		
		\begin{lemma}
		\label{lemma:deformationField}
			There exists a function $u\in L_t^2H^1_{x,\mathrm{loc}}(\frak A_D(F);\R^n)$ such that
			\begin{enumerate}
				\renewcommand{\labelenumi}{(\roman{enumi})}
				\item
					$\epsilon(u)=\widehat e$ a.e. in $\frak A_D(F)$,
				\item
					$u=b$ on the boundary $D_T\cap \frak A_D(F)$.
			\end{enumerate}
		\end{lemma}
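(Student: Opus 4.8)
The plan is to build $u$ piece by piece on a Lipschitz fine representation of $\frak A_D(F)$ and glue the pieces together. Since $\frak A_D(F)$ is a shrinking set whose time slices $\frak A_D(F)(t)=\frak A_D(\{z(t)>0\})$ are admissible with respect to $D$, Corollary \ref{cor:representationFormula} supplies a dense sequence $\{t_m\}\subseteq[0,T]$ containing $T$ and, for each $m$, a countable family $\{U_k^m\}_{k\in\N}$ of connected Lipschitz domains with $U_k^m\compact\frak A_D(F)(t_m)$, $\C H^{n-1}(\partial U_k^m\cap D)>0$, such that $\{U_k^m\}_k$ is a fine representation of $\frak A_D(F)(t_m)$ and $\frak A_D(F)=\bigcup_m\frak A_D(F)(t_m)\times[0,t_m]$. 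The construction in Lemma \ref{lemma:LipschitzDomain} moreover makes $U_k^m$ coincide with $\Omega$ in a neighbourhood (within $\ol\Omega$) of $\partial U_k^m\cap D$, so that the trace of $u_\varepsilon|_{U_k^m}$ on $\partial U_k^m\cap D$ equals the trace of $u_\varepsilon$ on $D$ there.

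Fix $k,m$. Since $\ol{U_k^m}\subseteq\frak A_D(F)(t_m)\subseteq F(t_m)$, Corollary \ref{cor:inclusion} provides $\eta>0$ and $\varepsilon_0>0$ with $z_\varepsilon(s)\ge\eta$ on $U_k^m$ for all $s\in[0,t_m]$ and $\varepsilon<\varepsilon_0$; hence $\epsilon(u_\varepsilon)=e_\varepsilon=\widehat e_\varepsilon$ a.e.\ on $U_k^m\times(0,t_m)$ for these $\varepsilon$. Because $u_\varepsilon=b$ on $D_T$, the function $u_\varepsilon(s)-b(s)$ vanishes on $\partial U_k^m\cap D$ in the sense of traces for a.e.\ $s\in(0,t_m)$, and this boundary portion has positive $\C H^{n-1}$-measure. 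Korn's inequality on the connected Lipschitz domain $U_k^m$ thus yields a constant $C_k^m$ independent of $\varepsilon$ with
$$
\|u_\varepsilon(s)-b(s)\|_{H^1(U_k^m)}\le C_k^m\big\|\epsilon(u_\varepsilon(s))-\epsilon(b(s))\big\|_{L^2(U_k^m)}\quad\text{for a.e. }s\in(0,t_m).
$$
Squaring, integrating over $s\in(0,t_m)$, and using $\|\widehat e_\varepsilon\|_{L^2(\Omega_T)}\le C$ (Lemma \ref{lemma:aprioriEstimatesI}\,(i)) together with $b\in W^{1,1}(0,T;W^{1,\infty}(\Omega;\R^n))$ show that $\{u_\varepsilon-b\}$ is bounded in $L^2(0,t_m;H^1(U_k^m;\R^n))$ uniformly for small $\varepsilon$.

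Enumerating the countable index set $\{(k,m)\}$ and extracting successively a diagonal subsequence (not relabelled, still compatible with the subsequence of Lemma \ref{lemma:convergingSubsequences}), we obtain functions $w_k^m\in L^2(0,t_m;H^1(U_k^m;\R^n))$ with $u_\varepsilon-b\weaklim w_k^m$ for every $k,m$. On $U_k^m\times(0,t_m)$ we have $\epsilon(u_\varepsilon)\weaklim\epsilon(w_k^m)+\epsilon(b)$ in $L^2$, while also $\epsilon(u_\varepsilon)=\widehat e_\varepsilon\weaklim\widehat e$; hence $\epsilon(w_k^m+b)=\widehat e$ a.e.\ on $U_k^m\times(0,t_m)$. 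Uniqueness of the weak limits of the single sequence $\{u_\varepsilon\}$ forces the functions $w_k^m+b$ to agree $\C L^{n+1}$-a.e.\ on overlaps, so they define a function $u$ on $\bigcup_{k,m}\big(U_k^m\times(0,t_m)\big)$, which exhausts $\frak A_D(F)$ up to an $\C L^{n+1}$-null set by the covering property in Corollary \ref{cor:representationFormula}. By construction $u-b$ satisfies condition (a)(ii) of Proposition \ref{prop:localSob} for the sets $U_k^m$, hence $u-b\in L_t^2H^1_{x,\mathrm{loc}}(\frak A_D(F);\R^n)$; as $b\in L_t^2H^1_{x,\mathrm{loc}}(\frak A_D(F);\R^n)$ too, we get $u\in L_t^2H^1_{x,\mathrm{loc}}(\frak A_D(F);\R^n)$ with $\epsilon(u)=\widehat e$ a.e.\ in $\frak A_D(F)$, which is (i). Finally, weak $H^1(U_k^m)$-convergence of $u_\varepsilon-b$ and weak continuity of the trace operator $H^1(U_k^m;\R^n)\to L^2(\partial U_k^m;\R^n)$ give that the trace of $u(s)-b(s)$ on $\partial U_k^m\cap D$ vanishes for a.e.\ $s\in(0,t_m)$; by the characterisation in Proposition \ref{prop:localSob}\,(b) this is precisely $u=b$ on $D_T\cap\frak A_D(F)$, i.e.\ (ii).

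The crux is the $\varepsilon$-uniform Korn estimate on each patch $U_k^m$. It is available only because $U_k^m$ is a Lipschitz domain retaining a positive-measure piece of the Dirichlet boundary (this is exactly what admissibility of $\frak A_D(F)(t_m)$ and the covering construction deliver) and because Corollary \ref{cor:inclusion} places $U_k^m\times(0,t_m)$ inside the uncollapsed region $\{z_\varepsilon>0\}$, where $\epsilon(u_\varepsilon)=\widehat e_\varepsilon$ is $L^2$-bounded. Everything after that is a routine diagonal-extraction and patching argument.
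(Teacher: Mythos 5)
Your proposal is correct and follows essentially the same route as the paper: cover $\frak A_D(F)$ by the Lipschitz patches $U_k^m$ from Corollary \ref{cor:representationFormula}, use Corollary \ref{cor:inclusion} to identify $\epsilon(u_\varepsilon)=\widehat e_\varepsilon$ there, apply Korn's inequality uniformly in $\varepsilon$ (the paper estimates $u_\varepsilon$ via $u_\varepsilon-b$ exactly as you do), extract a diagonal weakly convergent subsequence, glue the patchwise limits by uniqueness of weak limits, and conclude via Proposition \ref{prop:localSob} (a) and (b). The only cosmetic difference is that you carry $u_\varepsilon-b$ through the limit and invoke weak continuity of the trace operator explicitly, which the paper leaves implicit.
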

		\begin{proof}
				Let $\{U_k^m\}$ and $\{t_m\}$ be sequences satisfying the properties of Corollary \ref{cor:representationFormula} applied to $\frak A_D(F)$.
				We get for each fixed $k,m\in\N$
				\begin{align}
				\label{eqn:Ginclusion}
					U_k^m\times [0,t_m]\subseteq \{z_\varepsilon>0\}
				\end{align}
				for all $0<\varepsilon\ll1$ due to Corollary \ref{cor:inclusion}.
				Inclusion \eqref{eqn:Ginclusion} implies
				\begin{align}
				\label{eqn:eueFrakEquality}
					\epsilon(u_\varepsilon)=\widehat e_{\varepsilon}
				\end{align}
				a.e. in $U_k^m\times(0,t_m)$.
				Korn's inequality applied on the Lipschitz domain $U_k^m$ yields
				(note that $\C H^{n-1}(\partial U_k^m\cap D)>0$)
				\begin{align*}
					\|u_{\varepsilon}\|_{L^2(0,t_m;H^1(U_k^m;\mathbb R^{n}))}^2
						&\leq 2\int_0^{t_m}\|u_{\varepsilon}(t)-b(t)\|_{H^1(U_k^m;\mathbb R^{n})}^2
						+\|b(t)\|_{H^1(U_k^m;\mathbb R^{n})}^2\,\mathrm dt\\
					&\leq C\left(1+\int_0^{t_m}\|\epsilon(u_{\varepsilon}(t))\|_{L^2(U_k^m;\mathbb R^{n\times n})}^2\,\mathrm dt\right)\\
					&\leq C\left(1+\int_0^{t_m}\|\widehat e_{\varepsilon}(t))\|_{L^2(\Omega;\mathbb R^{n\times n})}^2\,\mathrm dt\right).
				\end{align*}
				with a constant $C=C(U_k^m,b)>0$.
				Together with the boundedness of $\widehat e_{\varepsilon}$ in $L^2(\Omega_T;\R^{n\times n})$, we can find a subsequence
				$\varepsilon\rightarrow 0^+$ and a function $u^{(k,m)}\in L^2(0,t_m;H^1(U_k^m;\mathbb R^{n}))$
				such that
				\begin{align}
				\label{eqn:uConvergence}
					u_{\varepsilon}\rightharpoonup u^{(k,m)}\text{ in }L^2(0,t_m;H^1(U_k^m;\mathbb R^{n})).
				\end{align}
				Thus $\epsilon(u^{(k,m)})=\widehat e$ in $U_k^m\times(0,t_m)$ because of \eqref{eqn:eueFrakEquality} and the
				weak convergence property of $\widehat e_\varepsilon$.
				For each $k,m\in\N$, we can apply the argumentation above.
				Therefore, by successively choosing subsequences and by applying a diagonalization
				argument, we obtain a subsequence $\varepsilon\rightarrow 0^+$ such that \eqref{eqn:uConvergence} holds for all $k,m\in\N$.
				
				Since $u^{(k_1,m_1)}=u^{(k_2,m_2)}$ a.e. on $U_{k_1}^{m_1}\times(0,t_{m_1})\cap U_{k_2}^{m_2}\times(0,t_{m_2})$ for all $k_1,k_2,m_1,m_2\in\N$,
				we obtain an $u:\frak A_D(F)\rightarrow\R^n$ such that $u|_{U_k^m\times(0,t_m)}\in L^2(0,t_m;H^1(U_k^m;\R^n))$ for all $m\in\N$.
				Proposition \ref{prop:localSob} (a) yields $u\in L_t^2H^1_{x,\mathrm{loc}}(F;\R^n)$ and the symmetric
				gradient $\epsilon(u)$ coincides with $\widehat e$. Therefore, (i) is shown.
				
				Furthermore, for every $k,m\in\N$, we have $u(t)=b(t)$ on $\partial U_k^m\cap D$ in the sense of traces for a.e. $t\in[0,t_m]$.
				By Proposition \ref{prop:localSob} (b), (ii) follows.
				\ep
		\end{proof}\\
		We are now able to prove Proposition \ref{prop:existenceResultSimplified}.\\\\
		\begin{proof}[Proof of Proposition \ref{prop:existenceResultSimplified}]
			Lemma \ref{lemma:convergingSubsequences} and Lemma \ref{lemma:deformationField} give the desired regularity properties of the functions
			$(e,u,z)$ in Proposition \ref{prop:existenceResultSimplified}.
			Here, we set $e:=\widehat e|_F\in L^2(F;\R^{n\times n})$.
			The property $e=\epsilon(u)$ in $\frak A_D(F)$ follows from Lemma \ref{lemma:deformationField}.
			
			In the following, we are going to prove that properties (ii)-(v)
			of Definition \ref{def:weakSolution} are satisfied.
			\begin{enumerate}
				\renewcommand{\labelenumi}{(\roman{enumi})}
				\item[(ii)]
					Lemma \ref{lemma:convergingSubsequences} (ii) allows us to pass to $\varepsilon\rightarrow0^+$ in \eqref{eqn:ID3} integrated from $t=0$ to $t=T$.
					Therefore, equation \eqref{eqn:forceBalanceWeak} holds for a.e. $t\in(0,T)$ and all $\zeta\in H_{D}^1(\Omega;\R^n)$.
					Moreover, the boundary condition $u=b$ on $D_T\cap \frak A_D(F)$ is satisfied. Definition \ref{def:admissibleSubset} immediately
					implies $D_T\cap F=D_T\cap \frak A_D(F)$.
				\item[(iii)]
					We first show \eqref{eqn:VI}.
					Let $\zeta\in L^\infty(0,T;W^{1,p}(\Omega))$ with $\zeta\leq 0$.
					The variational inequality \eqref{eqn:ID4} and the representation for $r_\varepsilon$ \eqref{eqn:rstructure} imply
					\begin{align}
					\label{eqn:VIapprox}
						&0\leq \int_{\Omega_T}\left(|\nabla z_\varepsilon|^{p-2}\nabla z_\varepsilon\cdot\nabla\zeta+(-\alpha+\beta\partial_t z_\varepsilon)\zeta\right)\dxt
							+\int_{\{z_\varepsilon>0\}}W_{,z}^\varepsilon(e_\varepsilon,z_\varepsilon)\zeta\dxt.
					\end{align}
					In addition,
					\begin{align*}
						\int_{\{z_\varepsilon>0\}}W_{,z}^\varepsilon(e_\varepsilon,z_\varepsilon)\zeta\dxt
							\leq{}&\int_{F\cap\{z_\varepsilon>0\}}W_{,z}^\varepsilon(e_\varepsilon,z_\varepsilon)\zeta\dxt\\
							={}&\int_{F}g'(z_\varepsilon)\mathbb C \widehat e_\varepsilon:\widehat e_\varepsilon\zeta\dxt
					\end{align*}
					Lemma \ref{lemma:convergingSubsequences}, a lower semi-continuity argument and
					$\mathds{1}_{\{z_\varepsilon>0\}\cap \{z=0\}}\rightarrow \mathds{1}_{\{z=0\}}$ a.e. in $\Omega_T$
					(see proof of Lemma \ref{lemma:convergingSubsequences}) yield
					\begin{align*}
						\limsup_{\varepsilon\rightarrow 0^+}\int_{\{z_\varepsilon>0\}}W_{,z}^\varepsilon(e_\varepsilon,z_\varepsilon)\zeta\dxt
							&\leq\int_{F}W_{,z}^\varepsilon(e,z)\zeta\dxt.
					\end{align*}
					Therefore, applying ''$\limsup_{\varepsilon\rightarrow 0^+}$`` on both sides of \eqref{eqn:VIapprox}, using the above estimate and
					Lemma \ref{lemma:convergingSubsequences} yield
					\begin{align}
					\label{eqn:VItemp}
						&\int_{F}|\nabla z|^{p-2}\nabla z\cdot\nabla\zeta+W_{,z}(e,z)\zeta\dx
						\geq\int_\Omega(\alpha-\beta\partial_t z)\zeta\dx.
					\end{align}
					The properties $\partial_t z\leq 0$ and $z\geq 0$ a.e. in $\Omega_T$ follow from
					Lemma \ref{lemma:convergingSubsequences} by taking $\partial_t z_\varepsilon\leq 0$ and $z_\varepsilon\geq 0$ a.e. in $\Omega_T$
					into account.
				\item[(iv)]
					The jump condition \eqref{eqn:damageJumpCondition} in (iv) of Definition \ref{def:weakSolution} holds trivially since
					we have the regularity
					$z\in L^\infty(0,T;W^{1,p}(\Omega))\cap H^1(0,T;L^2(\Omega))$.
				\item[(v)]
					To complete the proof, we need to show the energy estimates \eqref{eqn:EI}.
					Since $\{b^0,z_\varepsilon^0\}$ is a recovery sequence, we get
					$\C E_\varepsilon(\epsilon(u_\varepsilon^0),z_\varepsilon^0)\rightarrow \frak E(b^0,z^0)$
					as $\varepsilon\rightarrow 0^+$.
					Now, applying ''$\limsup_{\varepsilon\rightarrow0^+}$`` on both sides in \eqref{eqn:ID5} and using the convergence properties
					in Lemma \ref{lemma:convergingSubsequences} as well as lower semi-continuity arguments yield
					\begin{align}
						&\frak E(b^0,z^0)+\int_0^t\int_{F(s)} W_{,e}(e,z):\epsilon(\partial_t b)\dxs\notag\\
						&\qquad\geq \limsup_{\varepsilon\rightarrow 0^+}\left(\C E_\varepsilon(e_\varepsilon(t),z_\varepsilon(t))
							+\int_{\Omega_t} \alpha|\partial_t z_\varepsilon|+\beta |\partial_t z_\varepsilon|^2\dxs\right)\notag\\
						&\qquad\geq
							\limsup_{\varepsilon\rightarrow 0^+}\int_\Omega W^\varepsilon(e_\varepsilon(t),z_\varepsilon(t))\dx
							+\int_{\Omega}\frac 1p|\nabla z(t)|^p\dx\notag\\
						&\qquad\quad+\int_{\Omega_t}\alpha|\partial_t z|+\beta |\partial_t z|^2\dxs.
					\label{eqn:EnergyInequalityTemp}
					\end{align}
					Indeed, for an arbitrary $t\in(0,T)$, we derive by Fatou's lemma and Lemma \ref{lemma:convergingSubsequences}
					\begin{align}
						\int_0^t\left(\limsup_{\varepsilon\rightarrow 0^+}\int_\Omega W^\varepsilon(e_\varepsilon(s),z_\varepsilon(s))\dx\right)\ds\notag
						&\geq
							\limsup_{\varepsilon\rightarrow 0^+}\int_{\Omega_t}W^\varepsilon(e_\varepsilon,z_\varepsilon)\dxs\notag\\
						&\geq\liminf_{\varepsilon\rightarrow 0^+}\int_{F}(g(z_\varepsilon)+\varepsilon)
							\mathbb C\widehat e_\varepsilon:\widehat e_\varepsilon\dxs\notag\\
						&\geq\int_{F}W(e,z)\dxs.
					\label{eqn:energyLowerSemicontinuity}
					\end{align}
					We have used the weak convergence property
					$$
						\sqrt{g(z_\varepsilon)+\varepsilon}\;\widehat e_\varepsilon\weaklim \sqrt{g(z)}\;\widehat e\text{ in }L^2(\Omega_T;\R^{n\times n})
					$$
					as $\varepsilon\rightarrow 0^+$.
					To the end, \eqref{eqn:energyLowerSemicontinuity} implies
					$$
						\limsup_{\varepsilon\rightarrow 0^+}\int_\Omega W^\varepsilon(e_\varepsilon(t),z_\varepsilon(t))\dx\geq
							\int_{F(t)}W(e(t),z(t))\dx
					$$
					for a.e. $t\in(0,T)$.
					Combining it with \eqref{eqn:EnergyInequalityTemp}, estimate \eqref{eqn:EI} is shown.
					\ep
			\end{enumerate}
		\end{proof}

	\subsection{Existence of weak solutions}
	\label{section:mainProb}
		By using the achievements in the previous section and Zorn's lemma, we will prove the main results,
		Theorem \ref{theorem:mainExistenceResult} and Remark \ref{theorem:localExistence}.
		To proceed, let $\eta>0$ be fixed and $\C P$ be the set
		\begin{align*}
			\C P:=\big\{(\widehat T,e,u,z,F)\,|\,&0<\widehat T\leq T\text{ and }(e,u,z,F)
				\text{ is a weak solution on }\\
			&\text{$[0,\widehat T]$ with fineness $\eta$ according to Definition \ref{def:approxWeakSolution}
				}\big\}.
		\end{align*}
		We introduce a partial ordering $\leq$ on $\C P$ by
		\begin{align*}
			(\widehat T_1,e_1,u_1,z_1,F_1)\leq (\widehat T_2,e_2,u_2,z_2,F_2)\quad\Leftrightarrow\quad&
				\widehat T_1\leq \widehat T_2,\,e_2|_{[0,\widehat T_1]}=e_1,\,u_2|_{[0,\widehat T_1]}=u_1,\\
			&z_2|_{[0,\widehat T_1]}=z_1,\,F_2|_{[0,\widehat T_1]}=F_1.
		\end{align*}
		The next two lemma prove the assumptions for Zorn's lemma.
		\begin{lemma}
		\label{lemma:Pnonempty}
			$\C P\neq\emptyset$.
		\end{lemma}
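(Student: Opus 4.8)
The plan is to obtain a single element of $\C P$ by invoking the simplified‑problem existence result of Proposition~\ref{prop:existenceResultSimplified} and then restricting the resulting solution to a sufficiently short time interval. First I would apply that proposition with the given data $(z^0,b)$ — whose hypotheses ($0\le z^0\le1$, $\{z^0>0\}$ admissible with respect to $D$, $b\in W^{1,1}(0,T;W^{1,\infty}(\Omega;\R^n))$) are precisely those available in Theorem~\ref{theorem:mainExistenceResult} — to get $e$, $u$, $z$ on $[0,T]$ with the stated regularity such that properties (ii)--(v) of Definition~\ref{def:weakSolution} hold for the shrinking set $F:=\{z>0\}$, and $e=\epsilon(u)$ in $\frak A_D(F)$. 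Since $z\in H^1(0,T;L^2(\Omega))\subseteq SBV^2(0,T;L^2(\Omega))$ has no jumps in time we have $z^-=z$, so the inclusion $F(t)\supseteq\frak A_D(\{z^-(t)>0\})$ holds automatically, with equality exactly when $\{z(t)>0\}$ is admissible. Consequently the only conditions of Definition~\ref{def:approxWeakSolution} that are not immediate are the last two lines of part (ii), which control how far $F$ may deviate from $\frak A_D(\{z^->0\})$; the idea is to buy these by passing to a small interval $[0,\widehat T]$.

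To control that deviation I would work with $z^\star(t):=z(t)\mathds 1_{\frak A_D(\{z(t)>0\})}\in SBV(0,T;L^2(\Omega))$ (as recalled before Definition~\ref{def:approxWeakSolution}), its jump set $J_{z^\star}$ and its right cluster set $C_{z^\star}$, and for $s\in J_{z^\star}$ the relatively open set $Q_s\subseteq\ol\Omega$ of material parts excluded at time $s$. The two structural facts I would establish are: monotonicity of $z$ makes exclusions permanent, so $\{z(t)>0\}\setminus\frak A_D(\{z(t)>0\})\subseteq\bigcup_{s\in J_{z^\star},\,s\le t}Q_s$; and for $s_1<s_2$ the path‑component realising $Q_{s_2}$ is still joined to $D$ at time $s_2$ whereas no point of $Q_{s_1}$ is, so the $Q_s$ are pairwise disjoint subsets of $\Omega$ and $\sum_{s\in J_{z^\star}}\C L^n(Q_s)\le\C L^n(\Omega)<\infty$. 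Since $\{z^0>0\}$ is admissible we have $0\notin J_{z^\star}$, hence $\sum_{s\in J_{z^\star}\cap(0,\widehat T]}\C L^n(Q_s)\to0$ as $\widehat T\to0^+$.

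I would then fix $\widehat T$ with $0<\widehat T<\eta$, $\widehat T\le T$, $\sum_{s\in J_{z^\star}\cap(0,\widehat T]}\C L^n(Q_s)<\eta$, and, in the case $0\notin C_{z^\star}$, additionally small enough that $J_{z^\star}\cap(0,\widehat T]=\emptyset$ (possible, since then $J_{z^\star}$ does not accumulate at $0$ from the right). The claim is that $(\widehat T,e|_{[0,\widehat T]},u|_{[0,\widehat T]},z|_{[0,\widehat T]},F|_{[0,\widehat T]})\in\C P$: the regularity in Definition~\ref{def:approxWeakSolution}(i) and the evolutionary equations in (iii) are inherited directly from Proposition~\ref{prop:existenceResultSimplified} (using $H^1\subseteq SBV^2$ in time); the inclusion and the measure bound in (ii) follow from the two structural facts, since $\C L^n\big(F(t)\setminus\frak A_D(\{z^-(t)>0\})\big)\le\sum_{s\in J_{z^\star}\cap(0,t]}\C L^n(Q_s)<\eta$ for all $t\in[0,\widehat T]$; and the identity $F(t)=\frak A_D(\{z^-(t)>0\})$ off $\bigcup_{\tau\in C_{z^\star}}[\tau,\tau+\eta)$ holds because either $0\in C_{z^\star}$, so that $[0,\widehat T]\subseteq[0,\eta)$ is contained in that union and the requirement is vacuous, or $0\notin C_{z^\star}$, so that $J_{z^\star}\cap(0,\widehat T]=\emptyset$ forces $\{z(t)>0\}$ to be admissible and hence $F(t)=\{z(t)>0\}=\frak A_D(\{z^-(t)>0\})$ on all of $[0,\widehat T]$. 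This yields $\C P\neq\emptyset$.

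The main obstacle is exactly this last verification: if material exclusions accumulate at $t=0^+$ (i.e.\ $0\in C_{z^\star}$), then no interval $[0,\widehat T]$ is exclusion‑free, so one cannot extract a genuine weak solution near $0$; the fineness parameter $\eta$ is used precisely to absorb all these early exclusions into the $\eta$‑neighbourhood $[0,\eta)$ of the cluster point $0$ while keeping their total excluded volume below $\eta$, which is possible because that volume is finite (bounded by $\C L^n(\Omega)$ via the disjointness of the $Q_s$) and hence small on short enough intervals. A secondary, more technical point is justifying $z^\star\in SBV(0,T;L^2(\Omega))$ and the disjointness and permanence of the $Q_s$ for the particular $z$ produced by Proposition~\ref{prop:existenceResultSimplified}, both of which rest on the monotonicity of $z$ together with the monotonicity of the operation $F\mapsto\frak A_D(F)$.
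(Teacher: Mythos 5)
Your proposal has the same skeleton as the paper's proof: start from the tuple of Proposition \ref{prop:existenceResultSimplified} (where $z$ is $H^1$ in time, so $z^-=z$ and $F(t)=\{z(t)>0\}\supseteq\frak A_D(\{z(t)>0\})$ automatically), restrict to a short interval, and split into the case where exclusion times accumulate at $t=0$ ($0\in C_{z^{\star}}$) and the case where a jump-free initial interval exists. Where you genuinely diverge is the measure estimate in the accumulation case. The paper proves directly that $\C L^n\big(\{z^0>0\}\setminus\frak A_D(\{z(t)>0\})\big)\rightarrow 0$ as $t\rightarrow 0^+$: every $x\in\{z^0>0\}\cap\Omega$ lies in a Lipschitz subdomain $U\compact\{z^0>0\}$ with $\C H^{n-1}(\partial U\cap D)>0$ (Lemma \ref{lemma:LipschitzDomain}), and by $z\in\C C(\ol{\Omega_T})$ (Theorem \ref{theorem:continuity}) one has $U\compact\{z(s)>0\}$, hence $x\in\frak A_D(\{z(s)>0\})$, for all small $s$; monotonicity and continuity of the Lebesgue measure on the decreasing deviation sets then give the smallness, and $\{z(t)>0\}\subseteq\{z^0>0\}$ transfers it to $F(t)$. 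You instead do a bookkeeping argument with pairwise disjoint excluded lumps $Q_s$ at jump times, a total volume budget $\C L^n(\Omega)$, and smallness of the tail over $J_{z^{\star}}\cap(0,\widehat T]$. That is a legitimate alternative (and gives a quantitative picture of the excluded volume), but it is heavier than needed and hides one step that is not as automatic as you indicate.

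The delicate step is the inclusion $\{z(t)>0\}\setminus\frak A_D(\{z(t)>0\})\subseteq\bigcup_{s\in J_{z^{\star}},\,s\le t}Q_s$. First, a point $x$ may be "excluded" only in a right-limit sense: the set of times with $x\in\frak A_D(\{z(\tau)>0\})$ can be a closed interval $[0,s^*]$, and then $x$ belongs to no set of the form $\bigcap_{\tau<s}\frak A_D(\{z(\tau)>0\})\setminus\frak A_D(\{z(s)>0\})$; you must take $\widetilde Q_s:=\bigcap_{\tau<s}\frak A_D(\{z(\tau)>0\})\setminus\bigcup_{\sigma>s}\frak A_D(\{z(\sigma)>0\})$ (disjointness survives) to catch such points. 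Second, "monotonicity of $z$ and of $\frak A_D$" alone does not show that these exclusion times lie in $J_{z^{\star}}$, i.e. that they are jumps of the $L^2(\Omega)$-valued map $z^{\star}$: for that you need the path-component argument that when $x$ with $z(x,t)>0$ is excluded at time $s$, a whole nonempty relatively open component $P\ni x$ of $\{z(\sigma)>0\}$ (at, or just after, $s$) is excluded with it while $P\subseteq\frak A_D(\{z(\tau)>0\})$ for all $\tau<s$, so that the drop of $z^{\star}$ has positive $L^2$-norm because $z>0$ on the open set $P$. (The same fact is what underlies your claim — shared, implicitly, by the paper — that an interval without jumps of $z^{\star}$ is exclusion-free.) With $\widetilde Q_s$ and this component argument your route closes; the paper's continuity argument simply avoids the bookkeeping altogether.
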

		\begin{proof}
			Let $(e,u,z)$ be the tuple from Proposition \ref{prop:existenceResultSimplified}
			to the initial-boundary data $(z^0,b)$.
			If there exists an $\varepsilon>0$ such that $J_{z^{\star}}\cap[0,\varepsilon]=\emptyset$
			with $z^{\star}(t):=z(t)\mathds 1_{\frak A_D(\{z^-(t)>0\})}$ then
			$(\varepsilon,e,u,z,F)\in\C P$.
			Otherwise, we find $0\in C_{z^{\star}}$.
			We claim
			\begin{align}
			\label{eqn:inclusion1}
				\C L^n\left(\{z^0>0\}\setminus\frak A_D(\{z(t)>0\})\right)\rightarrow 0\text{ as }t\rightarrow 0^+.
			\end{align}
			We consider the non-trivial case $z^0\not\equiv 0$.
			Let $x\in \{z^0>0\}\cap\Omega$.
			Since $\{z^0>0\}\subseteq\ol{\Omega_T}$ is relatively open and admissible with respect to $D$, there exists a Lipschitz domain $U\compact \{z^0>0\}$ with $x\in U$
			such that $\C H^{n-1}(\partial U\cap D)>0$ by Lemma \ref{lemma:LipschitzDomain}.
			Because of Theorem \ref{theorem:continuity}, $z\in\C C(\ol{\Omega_T})$ and, consequently, there exists a $t>0$ such that $U\compact \{z(s)>0\}$ for all $0\leq s<t$.
			In particular, $x\in \frak A_D(\{z(s)>0\})$ for all $0\leq s<t$.
			This proves \eqref{eqn:inclusion1}.
			Finally, choose $\varepsilon>0$ so small such that $\varepsilon<\eta$ and (note the monotonicity of $z$ with respect to $t$)
			$$
				\C L^n\left(\{z(t)>0\}\setminus\frak A_D(\{z(t)>0\})\right)\leq\C L^n\left(\{z^0>0\}\setminus\frak A_D(\{z(t)>0\})\right)<\eta
			$$
			for all $0\leq t<\varepsilon$.
			We have proved that $(e,u,z)$ on $F:=\{z>0\}$ is a weak solution with fineness $\eta$ on the time interval $[0,\varepsilon]$,
			i.e. $(\varepsilon,e,u,z,F)\in\C P$.
			\ep
		\end{proof}
		
		\begin{lemma}
		\label{lemma:PupperBound}
			Every totally ordered subset of $\C P$ has an upper bound.
		\end{lemma}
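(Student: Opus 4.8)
Let $\mathcal C\subseteq\C P$ be a nonempty totally ordered subset and set $\widehat T^{\star}:=\sup\{\widehat T\,|\,(\widehat T,e,u,z,F)\in\mathcal C\}\in(0,T]$. If this supremum is attained by some element of $\mathcal C$, then by total ordering that element is the maximum of $\mathcal C$ and already is the desired upper bound; so assume from now on it is not attained and pick a sequence $(\widehat T_k,e_k,u_k,z_k,F_k)\in\mathcal C$ with $\widehat T_k\nearrow\widehat T^{\star}$. By the compatibility built into the ordering $\leq$ on $\C P$, for $j\leq k$ the objects with index $k$ restrict on $[0,\widehat T_j]$ to those with index $j$; hence they glue to well-defined functions $e$, $u$, $z$ and a set $F\subseteq\ol{\Omega\times(0,\widehat T^{\star})}$ whose restriction to $[0,\widehat T_k]$ is the $k$-th chain member. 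In particular, conditions (ii)--(v) of Definition \ref{def:weakSolution} and the shrinking-set/fineness conditions of Definition \ref{def:approxWeakSolution} --- all of which are local in time --- hold on $[0,\widehat T^{\star})$. The plan is to extend this glued object to the closed interval $[0,\widehat T^{\star}]$ and to verify the result lies in $\C P$; then it is by construction an upper bound for $\mathcal C$.

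The key input are a priori bounds that are uniform along the chain. Applying Gronwall's inequality to the energy inequality \eqref{eqn:EI} --- using the structure \eqref{eqn:definitionW} of $W$ together with Young's inequality to absorb $\int W_{,e}:\epsilon(\partial_t b)$, the regularity $b\in W^{1,1}(0,T;W^{1,\infty}(\Omega;\R^n))$, the fact that $\frak e_0^+$ equals $\C E$ at the common initial datum, and the nonnegativity of the jump terms $\C J_s$ --- produces a constant $C=C(z^0,b,T,\eta)$, independent of $k$, with
\begin{align*}
	\sup_{t<\widehat T^{\star}}\|z(t)\|_{W^{1,p}(\Omega)}
		+\|\partial_t^{\mathrm a}z\|_{L^2((0,\widehat T^{\star})\times\Omega)}
		+\|e\|_{L^2(F)}
		+\sum_{s\in J_z\cap(0,\widehat T^{\star})}\C J_s\ \leq\ C.
\end{align*}
Since $z$ is non-increasing in $t$ (from $\partial_t^{\mathrm a}z\leq0$ and the jump condition \eqref{eqn:damageJumpCondition}) and bounded in $W^{1,p}(\Omega)$, the limit $z^-(\widehat T^{\star}):=\lim_{t\to\widehat T^{\star-}}z(t)$ exists strongly in $\C C(\ol\Omega)$ and weakly in $W^{1,p}(\Omega)$. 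I would then set $F(\widehat T^{\star})$ to be the relative interior in $\ol\Omega$ of $\bigcap_{s<\widehat T^{\star}}F(s)$, put $z^+(\widehat T^{\star}):=z^-(\widehat T^{\star})\mathds 1_{F(\widehat T^{\star})}$ (which lies in $W^{1,p}(\Omega)$ by Lemma \ref{lemma:truncation}), and extend $e$ and $u$ by $0$ at the single time $\widehat T^{\star}$, which is harmless since $\{\widehat T^{\star}\}$ is $\C L^1$-null and affects neither the function classes nor the a.e.-in-time statements. The inclusion $F(\widehat T^{\star})\supseteq\frak A_D(\{z^-(\widehat T^{\star})>0\})$ and the fineness bound $\C L^n\big(F(\widehat T^{\star})\setminus\frak A_D(\{z^-(\widehat T^{\star})>0\})\big)<\eta$ would then follow from the corresponding statements for $t<\widehat T^{\star}$ by monotone convergence of the nested sets.

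With the endpoint data in place, the remaining task is to check every item of Definition \ref{def:approxWeakSolution} on $[0,\widehat T^{\star}]$. Properties (ii) and (iii) are statements for a.e.\ $t$ and are unaffected by the endpoint; the jump condition \eqref{eqn:damageJumpCondition} holds for $t<\widehat T^{\star}$ by inheritance and at $t=\widehat T^{\star}$ by the definition of $z^+(\widehat T^{\star})$; the energy inequality \eqref{eqn:EI} at times $t\to\widehat T^{\star-}$ passes to the limit by weak lower semicontinuity of $\C E$ (using weak-$W^{1,p}$ convergence of $z(t)$ and weak-$L^2$ convergence of $e(t)$ along a subsequence) together with monotone convergence of the dissipation integral and of $\sum_{s\in J_z\cap(0,t]}\C J_s$. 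The values $\frak e_s^+$ for $s\in J_z\cap(0,\widehat T^{\star})$ are those of the chain members, while at $s=\widehat T^{\star}$ (should a further exclusion occur there) one takes $\frak e_{\widehat T^{\star}}^+:=\frak E(b(\widehat T^{\star}),z^+(\widehat T^{\star}))$, for which \eqref{eqn:variationalEnergyIneq} holds by Lemma \ref{lemma:EfrakEestimate} and which is bounded above by the left limit of the energy thanks to the uniform energy bound, so that $\C J_{\widehat T^{\star}}\geq0$.

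The genuinely delicate point --- and the step I expect to be the main obstacle --- is the regularity requirement $z\in SBV^2(0,\widehat T^{\star};L^2(\Omega))$ at the accumulation time $\widehat T^{\star}$: one has to exclude that the jumps of $z$ cluster at $\widehat T^{\star}$ from the left with a non-summable total, i.e.\ that $\sum_{s\in J_z}\|z^+(s)-z^-(s)\|_{L^2(\Omega)}=\infty$. The uniform bound $\sum_s\C J_s\leq C$ is what rescues this. At each jump time $s$ the difference $z^+(s)-z^-(s)=-z^-(s)\mathds 1_{\Omega\setminus F(s)}$ is supported on the path-connected components of $\{z^-(s)>0\}$ that are excluded at $s$; these excluded regions are pairwise disjoint over $s$ (a component never re-enters $F$), $z^-(s)$ vanishes on the part of their boundary interior to $\Omega$, and $0\leq z^-(s)\leq1$. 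A Poincar\'e--Sobolev estimate on these components (exploiting that their Poincar\'e constants shrink with their size) then bounds $\|z^+(s)-z^-(s)\|_{L^2(\Omega)}$ in terms of the released gradient energy $\int_{\Omega\setminus F(s)}|\nabla z^-(s)|^p$, which together with Corollary \ref{cor:gammaLimitRepresentation}(iii) is dominated by $\C J_s$; summing and using the disjointness of the supports yields $\sum_s\|z^+(s)-z^-(s)\|_{L^2(\Omega)}<\infty$. Combined with $\partial_t^{\mathrm a}z\in L^2$, this gives the $SBV^2$-decomposition of $\mathrm dz$ on $(0,\widehat T^{\star})$, completing the verification that $(e,u,z,F)$ is an approximate weak solution on $[0,\widehat T^{\star}]$ with fineness $\eta$. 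Hence $(\widehat T^{\star},e,u,z,F)\in\C P$, and it dominates every element of $\mathcal C$, which is the desired upper bound.
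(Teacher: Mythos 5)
Your overall strategy (glue the chain, get uniform bounds from Gronwall applied to \eqref{eqn:EI}, adjoin data at the limit time $\widehat T^{\star}$, and verify membership in $\C P$) is the paper's strategy, but two of your concrete steps do not work as stated. First, your choice of the endpoint set, $F(\widehat T^{\star}):=$ relative interior of $\bigcap_{s<\widehat T^{\star}}F(s)$, generically violates Definition \ref{def:approxWeakSolution}\,(ii): at any time not lying in $\bigcup_{t\in C_{z^{\star}}}[t,t+\eta)$ that definition requires the \emph{equality} $F(t)=\frak A_D(\{z^-(t)>0\})$, and if the first material exclusion of the whole evolution happens exactly at $t=\widehat T^{\star}$ (a component of measure $\gg\eta$ loses contact with $D$, or $z$ reaches $0$ on it, precisely at $\widehat T^{\star}$), then $\widehat T^{\star}\notin\bigcup_{t\in C_{z^{\star}}}[t,t+\eta)$ while your set strictly contains $\frak A_D(\{z^-(\widehat T^{\star})>0\})$ by a set of large measure; ``monotone convergence of the nested sets'' gives no bound on this discrepancy. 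It also leaves unclear why $z^-(\widehat T^{\star})\mathds 1_{F(\widehat T^{\star})}\in W^{1,p}(\Omega)$, since Lemma \ref{lemma:truncation} needs $z^-(\widehat T^{\star})=0$ on $\partial F(\widehat T^{\star})\setminus\partial\Omega$, which holds for $\frak A_D(\{z^->0\})$ but not for your set. The paper simply \emph{defines} $F(\widehat T^{\star}):=\frak A_D(\{z^-(\widehat T^{\star})>0\})$, which settles both points; your construction needs this replacement.

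Second, the regularity of $u$ at the endpoint is not ``harmless'': by \eqref{eqn:localSobolevSpace}, $u\in L_t^2H^1_{x,\mathrm{loc}}(F;\R^n)$ requires, for $t=\widehat T^{\star}$ and every open $U\compact F(\widehat T^{\star})$, that $u|_{U\times(0,\widehat T^{\star})}\in L^2(0,\widehat T^{\star};H^1(U;\R^n))$, and the chain only gives this on $(0,T_\theta)$ with no a priori uniformity. This is exactly where the paper invests its main analytic effort: it covers $\frak A_D(\{z^-(\widehat T^{\star})>0\})$ by Lipschitz domains $U_k$ with $\C H^{n-1}(\partial U_k\cap D)>0$ (Lemma \ref{lemma:LipschitzDomain}) and applies Korn's inequality on each $U_k$ with a time-independent constant, using the uniform bound on $\|e\|_{L^2(F)}$, then invokes Proposition \ref{prop:localSob}; your proposal omits this step entirely. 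Finally, your Poincar\'e argument for the $SBV^2$ jump summability does not close quantitatively: disjointness of the excluded regions gives $\sum_s\|z^+(s)-z^-(s)\|_{L^2}^2\leq\C L^n(\Omega)$, and a Poincar\'e bound on an excluded component at best yields $\|z^+(s)-z^-(s)\|_{L^2}\lesssim\big(\int_{E_s}|\nabla z^-(s)|^p\big)^{1/p}$, so even granting the (unjustified for weak solutions, cf.\ Remark \ref{remark:weakSolution}\,(iii)) lower bound $\C J_s\gtrsim\int_{E_s}|\nabla z^-(s)|^p$, summability of $\C J_s$ does not give summability of $\C J_s^{1/p}$; moreover the Poincar\'e constants of excluded components are controlled by their diameter and have no reason to be small. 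So the step you single out as the main obstacle is not actually resolved by your argument, whereas the steps the paper does carry out (endpoint definition of $F$ and the Korn/covering argument for $u$) are the ones your proposal is missing.
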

		\begin{proof}
			Let $\C R\subseteq\C P$ be a totally ordered subset.
			We denote with $[0,T_R]$ the corresponding time interval of an element $R\in \C R$.
			Let us select a sequence $\big\{T_\theta,e_\theta,u_\theta,z_\theta,F_\theta\big\}_{\theta\in(0,1)}\subseteq\C R$,
			with $T_{\theta_1}\leq T_{\theta_2}$ for $\theta_2\leq \theta_1$ and
			$\lim_{\theta\rightarrow 0^+}T_\theta=\sup_{Q\in\C R}T_Q=:\widehat T$.
			
			Let $t\in(0,\widehat T)$. There exists a $\theta\in(0,1)$ with $T_\theta\geq t$ and we define
			\begin{align*}
				(e(t),u(t),z(t),F(t))
				:=(e_\theta(t),u_\theta(t),z_\theta(t),F_\theta(t)).
			\end{align*}
			By construction, the functions $(e,u,z)$ satisfy
			the properties (ii)-(v) of Definition \ref{def:weakSolution} on $[0,\widehat T]$.
			It remains to show that $(e(t),u(t),z(t))$ are in the trajectory spaces as in Definition \ref{def:approxWeakSolution} (i)
			and that $F$ satisfies Definition \ref{def:approxWeakSolution} (ii).
			
			The energy estimate for $(e_\theta,u_\theta,z_\theta)$ implies
			\begin{align}
				&\C E (e(t),z(t))+\int_0^t\int_{F(s)}\left(\alpha|\partial_t^\mathrm{a} z|+\beta|\partial_t^\mathrm{a} z|^2\right)\dxs\notag\\
				&\qquad\leq \frak e_0^++\int_0^t\int_{F(s)}W_{,e}(e,z):\epsilon(\partial_t b)\dxs
			\label{eqn:energyEstimateTemp}
			\end{align}
			for a.e. $t\in(0,\widehat T)$.
			Gronwall's lemma yields boundedness of the left hand side of \eqref{eqn:energyEstimateTemp} with respect to a.e. $t\in(0,\widehat T)$.
			
			We immediately get
			\begin{align}
			\label{eqn:zSpace}
				&z\in L^\infty(0,\widehat T;W^{1,p}(\Omega))\cap SBV^2(0,\widehat T;L^2(\Omega)),
			\end{align}
			Variational inequality \eqref{eqn:VI} tested with $\zeta\equiv-1$ shows
			\begin{align*}
				\int_{F(t)}W_{,z}(e(t),z(t))\dx\leq \int_\Omega\alpha\dx-\int_{F(t)}\beta\partial_t^\mathrm{a} z(t)\dx
			\end{align*}
			for a.e. $t\in(0,\widehat T)$. This implies
			\begin{align}
			\label{eqn:eSpace}
				e\in L^2(F;\R^{n\times n}).
			\end{align}
			We know that $u|_{U\times(0,t)}\in L^2(0,t;H^1(U;\R^n))$ for all $t\in(0,\widehat T)$ and all open subsets
			$U\compact \frak A_D(F(t))$.
			Let $\{U_k\}$ be a Lipschitz cover of the admissible set
			$$
				F(\widehat T):=\frak A_D(\{z^-(\widehat T)>0\})
			$$
			according to Lemma \ref{lemma:LipschitzDomain} (in particular, Definition \ref{def:approxWeakSolution} (ii) is fulfilled).
			For each $k\in\N$, we apply Korn's inequality and get for all $t\in(0,\widehat T)$
			\begin{align*}
				\|u-b\|_{L^2(0,t;H^1(U_k;\R^n))}\leq C\|\epsilon(u)\|_{L^2(0,t;L^2(U_k;\R^n))},
			\end{align*}
			where $C>0$ depends on the domain $U_k$ but not on the time $t$.
			Thus $u|_{U_k\times(0,\widehat T)}\in L^2(0,T;H^1(U_k;\R^n))$.
			In conclusion,
			\begin{align}
			\label{eqn:uSpace}
				u\in L_t^2H^1_{x,\mathrm{loc}}(F;\R^n).
			\end{align}
			
			Therefore, property (i) of Definition \ref{def:approxWeakSolution} follows by \eqref{eqn:zSpace}-\eqref{eqn:uSpace}.
			We end up with\linebreak
			$\{\widehat T,e,u,z,F\}\in\C P$ satisfying
			$\{T_\theta,e_\theta,u_\theta,z_\theta,F_\theta\}\leq \{\widehat T,e,u,z,F\}$
			for all $\theta\in(0,1)$.
		\ep
		\end{proof}\\
		Weak solutions exhibit the following concatenation property.
		\begin{lemma}
		\label{lemma:concatenation}
			Let $t_1<t_2<t_3$ be real numbers.
			Suppose that
			\begin{align*}
				&\text{$\widetilde q:=(\widetilde e,\widetilde u,\widetilde z,\widetilde F)$ is a weak solution on $[t_1,t_2]$},\\
				&\text{$\widehat q:=(\widehat e,\widehat u,\widehat z,\widehat F)$ is a weak solution on $[t_2,t_3]$}\\
				&\qquad\qquad\qquad\;\;\;\text{with $\widehat{\frak e}_{t_2}^+=\frak E(\widehat b(t_2),\widehat z^+(t_2))$
				(the value $\frak e_{t_2}^+$ for $\widehat q$ in Definition \ref{def:weakSolution}).}
			\end{align*}
			Furthermore, suppose
			the compatibility condition $\widehat z^+(t_2)=\widetilde z^-(t_2)\mathds 1_{\frak A_D(\{\widetilde z^-(t_2)>0\})}$
			and the Dirichlet boundary data $b\in W^{1,1}(t_1,t_3;W^{1,\infty}(\Omega;\R^n))$.
			Then, we obtain that $q:=(e,u,z,F)$ defined as
			$q|_{[t_1,t_2)}:=\widetilde q$ and
			$q|_{[t_2,t_3]}:=\widehat q$
			is a weak solution on $[t_1,t_3]$.
		\end{lemma}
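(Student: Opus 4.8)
The plan is to set $q:=(e,u,z,F)$ with $q|_{[t_1,t_2)}:=\widetilde q$ and $q|_{[t_2,t_3]}:=\widehat q$ and then to check, item by item, the three groups of conditions in Definition \ref{def:approxWeakSolution}. Away from the joining time $t_2$ every one of them is inherited verbatim from $\widetilde q$ or $\widehat q$, so the only real work is at $t=t_2$. For the regularity in Definition \ref{def:approxWeakSolution}(i): a left limit at $t_2$ is unaffected by values on $[t_2,t_3]$, so $z^-(t_2)=\widetilde z^-(t_2)$, while $z^+(t_2)=\widehat z^+(t_2)$, and by the compatibility hypothesis these differ only through multiplication with $\mathds 1_{\frak A_D(\{\widetilde z^-(t_2)>0\})}$. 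Thus $z$ is a concatenation of two $SBV^2(\cdot\,;L^2(\Omega))$ functions with at most one additional jump inserted at $t_2$, its absolutely continuous time derivative is $\partial_t^\mathrm{a}\widetilde z$ on $(t_1,t_2)$ and $\partial_t^\mathrm{a}\widehat z$ on $(t_2,t_3)$, and $z^+(t_2)\in W^{1,p}(\Omega)$ by Lemma \ref{lemma:truncation}, whence $z\in L^\infty(t_1,t_3;W^{1,p}(\Omega))\cap SBV^2(t_1,t_3;L^2(\Omega))$. The fields $u$ and $e$ are simply restrictions (the pieces sit over disjoint time slabs), so $u\in L_t^2H^1_{x,\mathrm{loc}}(\frak A_D(F);\R^n)$ with $e=\epsilon(u)$ in $\frak A_D(F)$, and $F$ is again a shrinking set: it is relatively open because each piece is, and it is monotone since $F(s)\subseteq F(t)$ follows within each piece, while for $t<t_2\leq s$ it follows from $\widehat F(s)\subseteq\widehat F(t_2)\subseteq\widetilde F(t)$, the last inclusion being a consequence of $\widehat F(t_2)=\frak A_D(\{z^-(t_2)>0\})$ (established below) and of the monotonicity of $t\mapsto\frak A_D(\{\widetilde z^-(t)>0\})$.

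The shrinking-set and jump conditions at $t_2$ are read off the compatibility hypothesis. Since $\widehat z^+(t_2)=\widetilde z^-(t_2)\mathds 1_{\frak A_D(\{\widetilde z^-(t_2)>0\})}$, one gets $\{\widehat z^+(t_2)>0\}=\frak A_D(\{\widetilde z^-(t_2)>0\})$, which is admissible, hence $\frak A_D(\{\widehat z^+(t_2)>0\})=\frak A_D(\{\widetilde z^-(t_2)>0\})=\frak A_D(\{z^-(t_2)>0\})$; combining this with the jump condition of $\widehat q$ at its left endpoint (which, with the convention $\widehat z^-(t_2)=\widehat z^+(t_2)$, reads $\{\widehat z^+(t_2)>0\}\subseteq\widehat F(t_2)$ and forces $\widehat F(t_2)=\frak A_D(\{z^-(t_2)>0\})$) yields $F(t_2)\supseteq\frak A_D(\{z^-(t_2)>0\})$, the $\eta$-fineness bound at $t_2$, and the jump relation $z^+(t_2)=z^-(t_2)\mathds 1_{F(t_2)}$. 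The quasi-static equilibrium, the damage variational inequality and the sign constraints (Definition \ref{def:weakSolution}(ii),(iii)) hold for a.e.\ time in each subinterval, hence for a.e.\ time in $(t_1,t_3)$, and the initial value at $t_1$ and the Dirichlet datum are those of $\widetilde q$.

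The substantial point is the energy inequality, Definition \ref{def:weakSolution}(v), on $[t_1,t_3]$. For $t<t_2$ it is exactly that of $\widetilde q$. For $t\geq t_2$ I fix the value $\frak e_{t_2}^+:=\widehat{\frak e}_{t_2}^+=\frak E(b(t_2),\widehat z^+(t_2))$; by Lemma \ref{lemma:EfrakEestimate}, applied with $\xi=b(t_2)$, $z=z^+(t_2)$, $u=b(t_2)+\zeta$, this value meets the upper energy estimate \eqref{eqn:variationalEnergyIneq}. Letting $t\rightarrow t_2^-$ in the energy inequality of $\widetilde q$ --- the dissipation integrals and the $\C J$-sums over $(t_1,t)$ are nondecreasing in $t$ and the work term is absolutely continuous in $t$ --- gives
		\begin{align*}
			\lim_{\tau\rightarrow t_2^-}\mathop{\mathrm{ess\,inf}}_{\vartheta\in(\tau,t_2)}\C E(e(\vartheta),z(\vartheta))
			+\int_{t_1}^{t_2}\int_{F(s)}\alpha|\partial_t^\mathrm{a} z|+\beta|\partial_t^\mathrm{a} z|^2\dxs
			+\sum_{s\in J_z\cap(t_1,t_2)}\C J_s
			\leq \frak e_{t_1}^++\int_{t_1}^{t_2}\int_{F(s)}W_{,e}(e,z):\epsilon(\partial_t b)\dxs .
		\end{align*}
Writing the first term as $\C J_{t_2}+\frak e_{t_2}^+$ --- which is precisely the definition of $\C J_{t_2}$ for $q$ --- and adding the energy inequality of $\widehat q$ on $[t_2,t]$, whose starting value is exactly $\widehat{\frak e}_{t_2}^+=\frak e_{t_2}^+$, all terms telescope and reproduce the energy inequality for $q$ on $[t_1,t]$, with $J_z\cap(t_1,t]=(J_{\widetilde z}\cap(t_1,t_2))\cup\{t_2\}\cup(J_{\widehat z}\cap(t_2,t])$; the estimate \eqref{eqn:variationalEnergyIneq} at each jump point is inherited except at $t_2$, where it has just been checked.

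It remains to verify $\C J_{t_2}\geq 0$, i.e.\ that $\lim_{\tau\rightarrow t_2^-}\essinf_{\vartheta\in(\tau,t_2)}\C E(e(\vartheta),z(\vartheta))\geq\frak E(b(t_2),\widehat z^+(t_2))$; this is where the real work sits. For a.e.\ $\vartheta\in(t_1,t_2)$ the pair $(\widetilde e(\vartheta),\widetilde z(\vartheta))$ fulfils the hypotheses of Lemma \ref{lemma:EfrakEestimate} with $\xi=b(\vartheta)$ --- on every Lipschitz domain compactly contained in the admissible part one has $\widetilde u(\vartheta)=b(\vartheta)$ on $D$, and $\C E$ is evaluated with the zero-extension convention --- so $\C E(e(\vartheta),z(\vartheta))\geq\frak E(b(\vartheta),\widetilde z(\vartheta))$. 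Along a sequence $\vartheta\to t_2^-$ realizing that limit one has $b(\vartheta)\to b(t_2)$ in $H^1(\Omega;\R^n)$ and $\widetilde z(\vartheta)\weaklim\widetilde z^-(t_2)$ in $W^{1,p}(\Omega)$, so lower semicontinuity of the $\Gamma$-limit $\frak E$ gives $\lim_{\tau\rightarrow t_2^-}\essinf_{\vartheta}\C E(e(\vartheta),z(\vartheta))\geq\frak E(b(t_2),\widetilde z^-(t_2))$, and Corollary \ref{cor:gammaLimitRepresentation}(iii) applied with the set $\frak A_D(\{\widetilde z^-(t_2)>0\})$ yields $\frak E(b(t_2),\widetilde z^-(t_2))\geq\frak E(b(t_2),\widetilde z^-(t_2)\mathds 1_{\frak A_D(\{\widetilde z^-(t_2)>0\})})=\frak E(b(t_2),\widehat z^+(t_2))=\frak e_{t_2}^+$. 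I expect this verification of $\C J_{t_2}\geq 0$ --- the careful passage to the left limit of the energy, combined with applying Lemma \ref{lemma:EfrakEestimate} at the trajectory values where $\widetilde u(\vartheta)$ lives only on $\frak A_D(\widetilde F(\vartheta))$ --- to be the main obstacle; the rest is matching endpoint data and restricting function spaces.
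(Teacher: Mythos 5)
Your overall architecture matches the paper's proof: pass to the left limit in the energy inequality of $\widetilde q$, identify $\frak e_{t_2}^+$ with $\frak E(b(t_2),\widehat z^+(t_2))$ (which satisfies \eqref{eqn:variationalEnergyIneq} by Lemma \ref{lemma:EfrakEestimate}), show $\C J_{t_2}\geq 0$ via $\Gamma$-limit lower semicontinuity and Corollary \ref{cor:gammaLimitRepresentation}(iii), and add the energy inequality of $\widehat q$ starting from $\widehat{\frak e}_{t_2}^+$. However, your central estimate has a genuine gap: you apply Lemma \ref{lemma:EfrakEestimate} to the \emph{untruncated} pair, claiming $\C E(e(\vartheta),z(\vartheta))\geq\frak E(b(\vartheta),\widetilde z(\vartheta))$ for a.e.\ $\vartheta<t_2$. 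The hypotheses of that lemma require $u\in H^1_\mathrm{loc}(\{z>0\};\R^n)$ with $u=\xi$ on $D\cap\partial U$ for \emph{every} Lipschitz $U\compact\{z>0\}$, and its conclusion bounds $\C E(\epsilon(u),z)$, not $\C E(e,z)$. For an approximate weak solution, $\widetilde u(\vartheta)$ exists only on $\frak A_D(\widetilde F(\vartheta))$, which may be strictly smaller than $\{\widetilde z(\vartheta)>0\}$ (that is exactly the slack built into Definition \ref{def:approxWeakSolution}), and on $\widetilde F(\vartheta)\setminus\frak A_D(\widetilde F(\vartheta))$ the field $\widetilde e(\vartheta)$ is not known to be a symmetrized gradient at all. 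Your parenthetical remark that the trace condition holds ``on every Lipschitz domain compactly contained in the admissible part'' does not verify the lemma's hypothesis on $\{\widetilde z(\vartheta)>0\}$, so the inequality is unjustified as stated.

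The paper resolves precisely this point by reversing your order of operations: first pass from $\C E(e(\vartheta),z^-(\vartheta))$ to $\C E(\epsilon(u(\vartheta)),z^-(\vartheta)\mathds 1_{\frak A_D(\{z^-(\vartheta)>0\})})$ (using $W\geq 0$ and Lemma \ref{lemma:truncation}), then apply Lemma \ref{lemma:EfrakEestimate} to the \emph{truncated} damage variable, whose positivity set is exactly where $u(\vartheta)$ lives and where the trace condition is available. The price is that the weak $W^{1,p}$-limit of $z^-(\vartheta)\mathds 1_{\frak A_D(\{z^-(\vartheta)>0\})}$ as $\vartheta\to t_2^-$ is not $z^-(t_2)$ but the function $\chi=z^-(t_2)\mathds 1_{\bigcap_{\tau<t_2}\frak A_D(\{z^-(\tau)>0\})}$, so after lower semicontinuity one lands at $\frak E(b(t_2),\chi)$ and needs the inclusion $\frak A_D(\{z^-(t_2)>0\})\subseteq\bigcap_{\tau<t_2}\frak A_D(\{z^-(\tau)>0\})$, the compatibility condition and Corollary \ref{cor:gammaLimitRepresentation}(iii) to descend to $\frak E(b(t_2),z^+(t_2))$. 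Your shortcut (limit first with the untruncated $z$, truncate only at $t_2$) skips the $\chi$-step but rests on the unproved first inequality; it is repairable, but only by reinstating the truncate-then-estimate-then-limit order of the paper. A further minor point: the jump condition of $\widehat q$ at its left endpoint only forces $\{\widehat z^+(t_2)>0\}\subseteq\widehat F(t_2)$, not the equality $\widehat F(t_2)=\frak A_D(\{z^-(t_2)>0\})$ you assert; the shrinking-set bookkeeping at $t_2$ should be phrased with containment and the $\eta$-measure bound rather than equality.
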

		\begin{proof}
			Applying ``$\lim_{s\rightarrow t_2^-}\essinf_{\tau\in(s,t_2)}$'' on both sides of the energy estimate \eqref{eqn:EI} for
			$(\widetilde e,\widetilde u,\widetilde z,\widetilde F)$ yields
			\begin{align*}
			\begin{split}
				&\lim_{s\rightarrow t_2^-}\mathop\mathrm{ess\,inf}_{\tau\in(s,t_2)}\C E (e(\tau),z(\tau))
					+\int_{t_1}^{t_2}\int_{F(s)}\left(\alpha|\partial_t^\mathrm{a} z|+\beta|\partial_t^\mathrm{a} z|^2\right)\dxs
					+\liminf_{s\rightarrow t_2^-}\sum_{\tau\in J_{z}\cap(t_1,s]}\left(\C E^-(s)-\C E^+(s)\right)\\
				&\qquad\qquad\qquad\leq \frak e_{t_1}^++\int_{t_1}^{t_2}\int_{F(s)}W_{,e}(e,z):\epsilon(\partial_t b)\dxs.
			\end{split}
			\end{align*}
			This estimate can be rewritten as
			\begin{align}
				&\frak E(b(t_2),z^+(t_2))
					+\int_{t_1}^{t_2}\int_{F(s)}\left(\alpha|\partial_t^\mathrm{a} z|+\beta|\partial_t^\mathrm{a} z|^2\right)\dxs\notag\\
				&+\liminf_{s\rightarrow t_2^-}\sum_{\tau\in J_{z}\cap(t_2,s]}\left(\C E^-(s)-\C E^+(s)\right)
					+\lim_{s\rightarrow t_2^-}\mathop\mathrm{ess\,inf}_{\tau\in(s,t_2)}\C E(e(\tau),z(\tau))
					-\frak E(b(t_2),z^+(t_2))\notag\\
				&\qquad\qquad\qquad
					\leq \frak e_{t_1}^++\int_{t_1}^{t_2}\int_{F(s)}W_{,e}(e,z):\epsilon(\partial_t b)\dxs.
				\label{eqn:EIextended}
			\end{align}
			
			In the following, we show that we may choose the value $\frak E(b(t_2),z^+(t_2))$ for $\frak e_{t_2}^+$.
			By the property (i) of Definition \ref{def:weakSolution}, we get
			$z^-(s)\weaklim z^-(t_2)$ in $W^{1,p}(\Omega)$ and $b(s)\rightarrow b(t_2)$ in $W^{1,\infty}(\Omega;\R^n)$
			as $s\rightarrow t_2^-$.
			In particular, by using Lemma \ref{lemma:truncation} and the monotone decrease of $z^-$ with respect to $t$,
			$$
				z^-(s)\mathds 1_{\frak A_D(\{z^-(s)>0\})}
				\weaklim
				z^-(t_2)\mathds 1_{\bigcap_{\tau\in(t_1,t_2)}\frak A_D(\{ z^-(\tau)>0\})}=:\chi
			$$
			in $W^{1,p}(\Omega)$ as $s\rightarrow t_2^-$.
			By the definition of $\chi$, the inclusion
			$$
				\frak A_D(\{z^-(t_2)>0\})\subseteq\bigcap_{\tau\in(t_1,t_2)}\frak A_D(\{z^-(\tau)>0\})
			$$
			and the compatibility condition, we find $z^+(t_2)=\chi\mathds 1_{\frak A_D(\{z^-(t_2)>0\})}$.
			
			Thus, applying Lemma \ref{lemma:EfrakEestimate}, lower semi-continuity of the $\Gamma$-limit $\frak E$ and Corollary \ref{cor:gammaLimitRepresentation} (iii),
			we obtain
			\begin{align*}
				\lim_{s\rightarrow t_2^-}\mathop\mathrm{ess\,inf}_{\tau\in(s,t_2)}\C E(e(\tau),z(\tau))
				&=\lim_{s\rightarrow t_2^-}\mathop\mathrm{ess\,inf}_{\tau\in(s,t_2)}\C E(e(\tau),z^-(\tau))\\
				&\geq\lim_{s\rightarrow t_2^-}\mathop\mathrm{ess\,inf}_{\tau\in(s,t_2)}\C E(\epsilon(u(\tau)),
					z^-(\tau)\mathds 1_{\frak A_D(\{ z^-(\tau)>0\})})\\
				&\geq \lim_{s\rightarrow t_2^-}\mathop\mathrm{ess\,inf}_{\tau\in(s,t_2)}\frak E (b(\tau),
					z^-(\tau)\mathds 1_{\frak A_D(\{ z^-(\tau)>0\})})\\
				&\geq \frak E (b(t_2),\chi)\\
				&\geq \frak E (b(t_2),z^+(t_2)).
			\end{align*}
			Now we choose $ \frak  e_{t_2}^+ = \frak E (b(t_2),z^+(t_2))$. This leads to
			\begin{align*}
				0\leq \sum_{s\in J_z\cap(t_1,t_2]}\left(\C E^-(s)-\C E^+(s)\right)\leq{}&\lim_{s\rightarrow t_2^-}\mathop\mathrm{ess\,inf}_{\tau\in(s,t_2)}\C E (e(\tau),z(\tau))
					-\frak E(b(t_2),z^+(t_2))\\
				&+\liminf_{s\rightarrow t_2^-}\sum_{\tau\in J_z\cap(t_1,s]}\left(\C  E^-(\tau)-\C E^+(\tau)\right),
			\end{align*}
			where the second '$\leq$' becomes an '$=$' if $t_2\in J_{z}$.
			Consequently, \eqref{eqn:EIextended} becomes
			\begin{align}
				&\frak E(b(t_2),z^+(t_2))
					+\int_{t_1}^{t_2}\int_{F(s)}\left(\alpha|\partial_t^\mathrm{a} z|+\beta|\partial_t^\mathrm{a} z|^2\right)\dxs
					+\sum_{s\in J_z\cap(t_1,t_2]}\left(\C E^-(s)-\C E^+(s)\right)\notag\\
				&\qquad\qquad\qquad\qquad
					\leq \frak e_{t_1}^++\int_{t_1}^{t_2}\int_{F(s)}W_{,e}(e,z):\epsilon(\partial_t b)\dxs.
			\label{eqn:endpointCondition}
			\end{align}
			The energy inequality \eqref{eqn:EI} for $(\widehat e,\widehat u,\widehat z,\widehat F)$
			(taking $\widehat{\frak e}_{t_2}^+=\frak E(\widehat b(t_2),\widehat z^+(t_2))$ into account)
			can be expressed as
			\begin{align}
				&\C E(e(t),z(t))
					+\int_{t_2}^{t}\int_{F(s)}\left(\alpha|\partial_t^\mathrm{a} z|+\beta|\partial_t^\mathrm{a} z|^2\right)\dxs
					+\sum_{s\in J_{z}\cap(t_2,t]}\left(\C E^-(s)-\C E^+(s)\right)\notag\\
				&\qquad\qquad\qquad\qquad
					\leq \frak E(b(t_2),z^+(t_2))+\int_{t_2}^{t}\int_{F(s)}W_{,e}(e,z):\epsilon(\partial_t b)\dxs
			\label{eqn:extendedEnergyInequality}
			\end{align}
			for a.e. $t\in(t_2,t_3)$.
			Adding \eqref{eqn:endpointCondition} and \eqref{eqn:extendedEnergyInequality} shows that the energy estimate
			for $(e,u,z,F)$ also holds for a.e. $t\in(t_2,t_3)$.
			It is now easy to verify that $(e,u,z,F)$ is a weak solution on the time interval $[t_1,t_3]$ according to
			Definition \ref{def:weakSolution}.
			\ep
		\end{proof}\\
		\begin{proof}[Proof of Theorem \ref{theorem:mainExistenceResult}]
			By Zorn's lemma, we deduce the existence of a maximal element
			$R=(\widetilde T,\widetilde e,\widetilde u,\widetilde z,\widetilde F)$ in $\C P$.
			In particular, a maximal element
			satisfies the properties in Theorem \ref{theorem:mainExistenceResult} on the interval $[0,\widetilde T]$.
			We deduce $T=\widetilde T$.
			Otherwise, we get another weak solution $(\widehat e,\widehat u,\widehat z,\widehat F)$
			on $[\widetilde T,\widetilde T+\varepsilon]$ for an $\varepsilon>0$ with initial datum
			$\widetilde z^-(\widetilde T)\mathds 1_{\frak A_D(\widetilde z^-(\widetilde T)>0)}$
			(which is an element of $W^{1,p}(\Omega)$ by Lemma \ref{lemma:truncation})
			as in the proof of Lemma \ref{lemma:Pnonempty} with $e_{\widetilde T}^+=\frak E(b(\widetilde T),z(\widetilde T))$ if $\widetilde T\in J_z$.
			By Lemma \ref{lemma:concatenation}, $(\widetilde e,\widetilde u,\widetilde z,\widetilde F)$ and
			$(\widehat e,\widehat u,\widehat z,\widehat F)$ can be concatenate to a weak solution
			on $[0,\widetilde T+\varepsilon]$ which is a contradiction.
			\ep
		\end{proof}\\
		\begin{proof}[Proof of Remark \ref{theorem:localExistence}]
			Here, let us consider the set $\C P$ given by
			\begin{align*}
				\C P:=\big\{(\widehat T,u,z)\,|\,&0<\widehat T\leq T\text{ and }(u,z)
					\text{ is a weak solution on}\\
				&\text{$[0,\widehat T]$ according to Definition \ref{def:weakSolution}}\big\}
			\end{align*}
			with an ordering $\leq$ as above (except the conditions $e_2|_{[0,\widehat T_1]}=e_1$ and $F_2|_{[0,\widehat T_1]}=F_1$ which are not needed here).
			Proposition \ref{prop:existenceResultSimplified} shows $\C P\neq\emptyset$ by noticing $z\in\C C(\ol{\Omega_T})$ (see
			Theorem \ref{theorem:continuity}) and $0<\eta\leq z^0$.
			The property that every totally ordered subset of $\C P$ has an upper bound can be shown as in
			Lemma \ref{lemma:PupperBound}.
			A maximal element satisfies the claim.
			\ep
		\end{proof}

	\begin{appendix}
		\section{Embedding Theorem}
		The embedding theorem \ref{theorem:continuity} in this appendix is a special version of a more general compactness result in \cite[Corollary 5]{Simon}.
		However, we would like to present a different (short) proof which requires the following generalized version of Poincar\'e's inequality.
		\begin{theorem}[Generalized Poincar\'e inequality {\cite[Section 6.15]{Alt99}}]
		\label{theorem:genPI}
			Let $\Omega\subseteq\R^n$ be a bounded Lipschitz domain and $M\subseteq W^{1,p}(\Omega;\R^m)$ non-empty, convex and closed
			with $1<p<\infty$.
			Furthermore, $M$ satisfies the property
			$$
				u\in M,\;\alpha\geq 0\;\Longrightarrow\;\alpha u\in M.
			$$
			Then the following statements are equivalent:
			\begin{enumerate}
				\renewcommand{\labelenumi}{(\roman{enumi})}
				\item
					There exists a $u_0\in M$ and a constant $C_0>0$ such that for all $\xi\in\R^m$
					$$
						u_0+\xi\in M\;\Longrightarrow\;|\xi|\leq C_0.
					$$
				\item
					There exists a constant $C>0$ such that for all $u\in M$
					$$
						\|u\|_{L^p(\Omega;\R^m)}\leq C\|\nabla u\|_{L^p(\Omega;\R^{m\times n})}.
					$$
			\end{enumerate}
		\end{theorem}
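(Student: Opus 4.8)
The plan is to prove the two implications separately: (ii)$\,\Rightarrow\,$(i) by an elementary estimate, and (i)$\,\Rightarrow\,$(ii) by a compactness--contradiction argument using reflexivity of $W^{1,p}$, the Rellich--Kondrachov embedding on the bounded Lipschitz domain $\Omega$, and the convexity and cone structure of $M$.

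For (ii)$\,\Rightarrow\,$(i), I would fix any $u_0\in M$ (possible since $M\neq\emptyset$) and, given a constant vector $\xi\in\R^m$ with $u_0+\xi\in M$, use $\nabla(u_0+\xi)=\nabla u_0$ together with (ii) applied to $u_0+\xi$ to get $\|u_0+\xi\|_{L^p(\Omega;\R^m)}\leq C\|\nabla u_0\|_{L^p(\Omega;\R^{m\times n})}$. Since $\|\xi\|_{L^p(\Omega;\R^m)}=|\xi|\,\C L^n(\Omega)^{1/p}$, the triangle inequality then yields $|\xi|\leq C_0$ with $C_0:=\C L^n(\Omega)^{-1/p}\big(C\|\nabla u_0\|_{L^p}+\|u_0\|_{L^p}\big)$, which is (i).

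For (i)$\,\Rightarrow\,$(ii), I would argue by contradiction: if (ii) fails, then for each $k\in\N$ there is $u_k\in M$ with $\|u_k\|_{L^p}>k\|\nabla u_k\|_{L^p}$; necessarily $u_k\neq 0$, so the cone property gives $v_k:=u_k/\|u_k\|_{L^p}\in M$ with $\|v_k\|_{L^p}=1$ and $\|\nabla v_k\|_{L^p}<1/k$. Since $1<p<\infty$, the space $W^{1,p}(\Omega;\R^m)$ is reflexive, so along a subsequence $v_k\weaklim v$ in $W^{1,p}$, and by Rellich--Kondrachov $v_k\rightarrow v$ in $L^p$, hence $\|v\|_{L^p}=1$. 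From $\nabla v_k\rightarrow 0$ in $L^p$ and $\nabla v_k\weaklim\nabla v$ it follows that $\nabla v=0$, so (as $\Omega$ is connected) $v$ equals a constant vector $\xi\in\R^m$ with $|\xi|=\C L^n(\Omega)^{-1/p}>0$. Because $M$ is convex and norm-closed it is weakly closed, so $\xi=v\in M$.

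It then remains to contradict (i). Taking $u_0\in M$ and $C_0>0$ from (i), the cone property gives $2u_0\in M$ and $2\alpha\xi\in M$ for every $\alpha\geq 0$, and convexity gives $u_0+\alpha\xi=\tfrac12(2u_0)+\tfrac12(2\alpha\xi)\in M$; since $\alpha\xi$ is a constant vector, (i) would force $\alpha|\xi|=|\alpha\xi|\leq C_0$ for all $\alpha\geq 0$, which is absurd because $|\xi|>0$. Hence (ii) holds. I expect the only genuinely delicate step to be this last one --- arranging, via the cone property together with convexity, for the affine family $u_0+\alpha\xi$ to lie in $M$ so that hypothesis (i) becomes applicable --- while the remainder is the routine weak-compactness argument.
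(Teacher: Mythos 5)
Your proof is correct and follows essentially the standard route: the paper does not prove this statement itself but quotes it from \cite[Section 6.15]{Alt99}, where the same scheme is used --- the elementary estimate for (ii)$\Rightarrow$(i), and for (i)$\Rightarrow$(ii) the normalization $v_k=u_k/\|u_k\|_{L^p}$, weak compactness in the reflexive space $W^{1,p}$, the Rellich--Kondrachov embedding, weak closedness of the convex cone $M$, identification of the limit as a nonzero constant vector $\xi\in M$, and the contradiction with (i) along the ray $u_0+\alpha\xi$ obtained from convexity and the cone property. The only cosmetic point is in (ii)$\Rightarrow$(i): replace your $C_0$ by, say, $\max\{1,C_0\}$ so that the required strict positivity $C_0>0$ holds even in the degenerate case $u_0=0$.
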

		\begin{theorem}
		\label{theorem:continuity}
			Let $\Omega\subseteq\R^n$ be a bounded Lipschitz domain and $p>n$.
			Then $$L^\infty(0,T;W^{1,p}(\Omega))\cap H^1(0,T;L^2(\Omega))\subseteq\C C(\ol{\Omega_T}).$$
		\end{theorem}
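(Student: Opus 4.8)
The plan is to exhibit a representative of $u$ which is jointly continuous on $\ol{\Omega_T}=\ol\Omega\times[0,T]$, obtained by marrying a temporal modulus of continuity in $L^2(\Omega)$ to a uniform spatial H\"older modulus, the two being glued together by a scale-invariant interpolation inequality whose proof rests on Theorem \ref{theorem:genPI}. First I would fix representatives. Since $u\in H^1(0,T;L^2(\Omega))$ and the one-dimensional embedding $H^1(0,T;L^2(\Omega))\hookrightarrow\C C^{0,1/2}([0,T];L^2(\Omega))$ holds (by Cauchy--Schwarz on $\int_s^t\|\partial_t u\|_{L^2}$), we may pick the representative with $\|u(t)-u(s)\|_{L^2(\Omega)}\leq C_0|t-s|^{1/2}$ for all $t,s\in[0,T]$. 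Since $u\in L^\infty(0,T;W^{1,p}(\Omega))$ there are a null set $\C N\subseteq(0,T)$ and $K>0$ with $\|u(t)\|_{W^{1,p}(\Omega)}\leq K$ for all $t\in(0,T)\setminus\C N$; as $p>n$, Morrey's embedding $W^{1,p}(\Omega)\hookrightarrow\C C^{0,\gamma}(\ol\Omega)$ with $\gamma:=1-\tfrac np$ is continuous, so for such $t$ we take the continuous spatial representative, with $\|u(t)\|_{\C C^{0,\gamma}(\ol\Omega)}\leq C_1K$.

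The heart of the matter is the interpolation estimate
$$
\|v\|_{\C C(\ol\Omega)}\leq C\big(\delta^{\gamma}\|\nabla v\|_{L^p(\Omega;\R^n)}+\delta^{-n/2}\|v\|_{L^2(\Omega)}\big)\qquad\text{for all }v\in W^{1,p}(\Omega),\ \delta\in(0,1].
$$
I would prove it by a covering-and-rescaling argument: cover $\ol\Omega$ by finitely many pieces which, via the bi-Lipschitz charts coming from the Lipschitz structure of $\partial\Omega$, are comparable to balls of radius $\delta$; on the reference configuration of each piece apply a Poincar\'e inequality together with Morrey's embedding; and then track how the $L^p$-norm of the gradient (scaling like $\delta^{\gamma}$) and the $L^2$-norm (scaling like $\delta^{-n/2}$) transform under the rescaling, summing the local bounds (using bounded overlap of the cover). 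Theorem \ref{theorem:genPI} is what makes this uniform over all the boundary pieces, since it supplies a Poincar\'e inequality with one constant for a whole class of admissible configurations $M$ (e.g.\ functions of vanishing average, or vanishing on a prescribed portion of the boundary).

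With this in hand, apply the estimate to $v=u(t)-u(s)$ for $t,s\in(0,T)\setminus\C N$: using the two bounds from the first paragraph,
$$
\|u(t)-u(s)\|_{\C C(\ol\Omega)}\leq C\big(2K\,\delta^{\gamma}+C_0\,\delta^{-n/2}|t-s|^{1/2}\big)\qquad\text{for all }\delta\in(0,1].
$$
Sending $s\to t$ and afterwards $\delta\to 0^+$ gives $\lim_{s\to t}\|u(t)-u(s)\|_{\C C(\ol\Omega)}=0$, so $t\mapsto u(t)$ is uniformly continuous from $(0,T)\setminus\C N$ into $\C C(\ol\Omega)$ and extends uniquely to $\widetilde u\in\C C([0,T];\C C(\ol\Omega))$ with $\widetilde u(t)=u(t)$ for a.e.\ $t$. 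Finally, approximating an arbitrary $t$ by good times and passing the bound $\|u(t_k)\|_{\C C^{0,\gamma}(\ol\Omega)}\leq C_1K$ to the limit shows that $\widetilde u$ carries the uniform spatial modulus $|\widetilde u(t,x)-\widetilde u(t,y)|\leq C_1K|x-y|^{\gamma}$ in addition to a temporal modulus; hence $\widetilde u$ is continuous on $\ol\Omega\times[0,T]=\ol{\Omega_T}$, and since $\widetilde u=u$ a.e.\ the claimed inclusion follows.

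The only genuinely delicate step is the interpolation inequality of the middle paragraph: the time increment $u(t)-u(s)$ is merely bounded --- not small --- in $W^{1,p}(\Omega)$, so Morrey's embedding on its own is useless; one really needs the extra $\delta$-scaling that lets the gradient term be made small while the (small) $L^2$-term absorbs the blow-up of $\delta^{-n/2}$. Carrying out the covering cleanly near the merely Lipschitz boundary, with constants uniform in $\delta$, is exactly where the generalized Poincar\'e inequality does its work.
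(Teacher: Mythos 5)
Your argument is correct, but it runs along a genuinely different track than the paper's. The paper fixes a representative with $z\in\C C([0,T];L^2(\Omega))$ and $z(t)\in W^{1,p}(\Omega)$ for every $t$, and then proves joint sequential continuity at a single point $(x,t)$ by contradiction: the $L^2$-continuity in time is upgraded to pointwise information only along an a.e.-convergent subsequence, an auxiliary sequence of points $y_m\to x$ is selected where $|z(y_m,t)-z(y_m,t_m)|\to0$, and the remaining spatial term is killed by the scaled Poincar\'e--Morrey bound $\|g\|_{\C C(\ol{B_\varepsilon(q_0)})}\leq C\varepsilon^{(p-n)/p}\|\nabla g\|_{L^p(B_\varepsilon(q_0))}$ for $g(q_0)=0$, obtained from Theorem \ref{theorem:genPI} on the unit ball plus scaling. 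You instead make the same $\varepsilon^{1-n/p}$ scaling quantitative through the two-parameter interpolation inequality $\|v\|_{\C C(\ol\Omega)}\leq C(\delta^{\gamma}\|\nabla v\|_{L^p}+\delta^{-n/2}\|v\|_{L^2})$, apply it to time increments, and conclude directly that $u$ has a representative in $\C C([0,T];\C C(\ol\Omega))$; this is cleaner (no contradiction/subsequence extraction) and strictly stronger, since optimizing in $\delta$ even yields a uniform temporal H\"older modulus with values in $\C C(\ol\Omega)$, whereas the paper only obtains continuity. The one soft spot in your write-up is the claim that Theorem \ref{theorem:genPI} by itself furnishes constants uniform over all rescaled boundary pieces: it is stated for one fixed domain and one class $M$, so the uniformity near the merely Lipschitz boundary needs the extra (standard) remark that finitely many bi-Lipschitz charts reduce all pieces to finitely many reference configurations with comparable constants. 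Alternatively you can sidestep this entirely: in your application the increments $u(t)-u(s)$ are bounded in the full $W^{1,p}$-norm, so the weaker inequality $\|v\|_{\C C(\ol\Omega)}\leq C(\delta^{\gamma}\|v\|_{W^{1,p}(\Omega)}+\delta^{-n/2}\|v\|_{L^2(\Omega)})$ suffices, and that follows at once from a Sobolev extension to $\R^n$ (bounded simultaneously on $W^{1,p}$ and $L^2$) together with the rescaled estimate on Euclidean balls of radius $\delta$ covering $\ol\Omega$.
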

	\begin{proof}
			Let $z\in L^\infty(0,T;W^{1,p}(\Omega))\cap H^1(0,T;L^2(\Omega))$.
			We can choose a representative such that $z\in \C C([0,T];L^2(\Omega))$ and $z(t)\in W^{1,p}(\Omega)$
			for all $t\in[0,T]$.
			By employing the embedding
			$W^{1,p}(\Omega)\subseteq\C C(\ol{\Omega})$ (note that $p>n$), we obtain a representant
			$z:\ol{\Omega_T}\rightarrow\R$ such that
			\begin{align}
			\label{eqn:zContinuity}
				z\in \C C([0,T];L^2(\Omega))\text{ and }z(t)\in \C C(\ol\Omega)\text{ for all }t\in[0,T].
			\end{align}
			Let $(x_m,t_m)\in\ol{\Omega_T}$ be arbitrary with
			$(x_m,t_m)\rightarrow(x,t)$ in $\ol{\Omega_T}$ as $m\rightarrow\infty$. We have
			\begin{align*}
				|z(x,t)-z(x_m,t_m)|\leq \underbrace{|z(x,t)-z(x,t_m)|}_{A_m}+\underbrace{|z(x,t_m)-z(x_m,t_m)|}_{B_m}.
			\end{align*}
			Assume that $A_m\not\rightarrow0$ as $m\rightarrow\infty$.
			Then, there exists a subsequence of $\{A_m\}$ (also denoted by $\{A_m\}$) such that $\lim_{m\rightarrow\infty}A_m>0$.
			Using this subsequence, it holds $z(\cdot,t_m)\rightarrow z(\cdot,t)$ in $L^2(\Omega)$ due to
			\eqref{eqn:zContinuity}.
			We obtain again a subsequence (we omit the additional subscript) such that
			$z(y,t_m)\rightarrow z(y,t)$ as $m\rightarrow\infty$ for a.e. $y\in\Omega$.
			Therefore, we can choose $y_m\rightarrow x$ in $\ol\Omega$ such that $|z(y_m,t)-z(y_m,t_m)|\rightarrow 0$ as
			$m\rightarrow\infty$. It follows
			\begin{align*}
				A_m\leq \underbrace{|z(x,t)-z(y_m,t)|}_{A_m^1}+\underbrace{|z(y_m,t)-z(y_m,t_m)|}_{A_m^2}
					+\underbrace{|z(y_m,t_m)-z(x,t_m)|}_{A_m^3}
			\end{align*}
			The continuity of $z(\cdot,t)$ due to \eqref{eqn:zContinuity}
			implies $A_m^1\rightarrow0$ as $m\rightarrow\infty$.
			$A_m^2$ converges to $0$ by the construction of $\{y_m\}$.
			To treat the term $A_m^3$, we apply the Poincar\'e inequality
			from Theorem \ref{theorem:genPI} with $M:=\{u\in W^{1,p}(B_1(q_0))\,|\,u(q_0)=0\}$ and obtain
			\begin{align}
			\label{eqn:PoincareIneqVariant}
				\|g\|_{L^p(B_1(q_0))}\leq C\|\nabla g\|_{L^p(B_1(q_0))}
			\end{align}
			for all $g\in M$, where $q_0\in\R^n$, and $C>0$ is independent of $g$ and $q_0$.
			Note that, due to $g\in W^{1,p}(B_1(q_0))\subseteq\C C(\ol{B_1(q_0)})$, $g$ is pointwise defined.
			By utilizing \eqref{eqn:PoincareIneqVariant} and using a scaling argument, we gain a $C>0$ such that for all $\varepsilon>0$
			and all $g\in W^{1,p}(B_\varepsilon(q_0))$ with $g(q_0)=0$ it follows
			\begin{align*}
			\begin{split}
				\|g\|_{\C C(\ol{B_\varepsilon(q_0)})}=\|g(\varepsilon\cdot)\|_{\C C(\ol{B_1(q_0)})}
				&\leq C\|g(\varepsilon\cdot)\|_{W^{1,p}(B_1(q_0))}\\
				&\leq C\|\varepsilon \nabla g(\varepsilon\cdot)\|_{L^p(B_1(q_0))}\\
				&=C\varepsilon^\frac{p-n}{p}\|\nabla g\|_{L^p(B_\varepsilon(q_0))}.
			\end{split}
			\end{align*}
			
			By setting $g_m(\cdot):=z(y_m,t_m)-z(\cdot,t_m)$ and $\varepsilon_m:=2|y_m-x|$, we can estimate $A_m^3$ in the following way
			(note that $g_m(y_m)=0$):
			\begin{align*}
				A_m^3\leq\|g_m\|_{\C C(\ol{B_{\varepsilon_m}(y_m)})}
					\leq C{\varepsilon_m}^\frac{p-n}{p}\|\nabla g_m\|_{L^p(B_{\varepsilon_m}(y_m))}.
			\end{align*}
			Since $z\in L^\infty(0,T;W^{1,p}(\Omega))\cap H^1(0,T;L^2(\Omega))$, $\|\nabla g_m\|_{L^p(B_{\varepsilon_m}(y_m))}$ is bounded with respect to $m$.
			In conclusion, $A_m^3\rightarrow 0$ as $m\rightarrow\infty$.
			Hence, we end up with a contradiction. 
			Therefore, $A_m\rightarrow 0$ as $m\rightarrow\infty$.
			
			The convergence $B_m\rightarrow 0$ as $m\rightarrow\infty$ can be shown as for $A_m^3\rightarrow 0$.
			\ep
	\end{proof}
		
	\section{Chain-rule for vector-valued functions of bounded variation}
	\label{section:BV}
		\begin{theorem}[BV-chain rule \cite{MV87}]
		\label{theorem:chainRule}
			Let $I\subseteq\R$ be an interval, $X$ be a real reflexive Banach space, $f\in BV_\mathrm{loc}(I;X)$
			with $\mathrm d f=f'_\mu \mu$ for a non-negative Radon measure $\mu$ on $I$
			and $f'_\mu\in L_\mathrm{loc}^1(I,\mu;X)$.
			Moreover, let $E:X\rightarrow\R$ be continuously Fr\'echet-differentiable.
			Then $E\circ f\in BV_\mathrm{loc}(I;\R)$ and $\mathrm d( E\circ f)$ admits as density relative to $\mu$ the function
			$t\mapsto \langle\theta(t),f'_\mu(t)\rangle$, where $\theta:I\rightarrow X^\star$ is defined as
			$$
				\theta(t):=\int_0^1\mathrm d E((1-r)f(t^-)+rf(t^+))\dr.
			$$
		\end{theorem}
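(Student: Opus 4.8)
The plan is to fix an arbitrary compact subinterval $[a,b]\subseteq I$, to recognise both $\mathrm d(E\circ f)$ and the candidate $\langle\theta(\cdot),f'_\mu(\cdot)\rangle\,\mu$ as locally finite signed Borel measures, and thereby to reduce the assertion to the single Stieltjes-type identity
\begin{align}
\label{eqn:chainRuleGoal}
E(f^+(b))-E(f^+(a))=\int_{(a,b]}\langle\theta(t),f'_\mu(t)\rangle\,\mathrm d\mu(t)\qquad\text{for all }a\le b\text{ in }\ol I ,
\end{align}
which I would then establish by a Riemann-sum argument.

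\emph{Preliminary compactness.} First I would show that the ``traced set''
\[
K:=\ol{\big\{(1-r)f^-(t)+rf^+(t)\ \big|\ t\in[a,b],\ r\in[0,1]\big\}}\subseteq X
\]
is compact. This is the step I expect to be the main obstacle: in infinite dimensions continuity of $\mathrm dE\colon X\to X^\star$ does not give boundedness on bounded sets, so one genuinely needs compactness of the set of points actually visited. The point is that the curve $t\mapsto f(t)$, after inserting the segment $[f^-(t),f^+(t)]$ at each of its (at most countably many) jump points $t$, admits a $1$-Lipschitz reparametrisation on the compact interval $[0,\var_{[a,b]}(f)]$ whose image is exactly $K$; hence $K$ is the continuous image of a compact set. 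Consequently $M:=\sup_{y\in K}\|\mathrm dE(y)\|_{X^\star}<\infty$, $E$ is $M$-Lipschitz on $K$, and therefore $\var_{[a,b]}(E\circ f)\le M\var_{[a,b]}(f)<\infty$, so that $E\circ f\in BV_\mathrm{loc}(I;\R)$. Continuity of $E$ also gives $(E\circ f)^\pm=E\circ f^\pm$, whence $\mathrm d(E\circ f)\big((a,b]\big)=E(f^+(b))-E(f^+(a))$.

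\emph{Reduction to \eqref{eqn:chainRuleGoal}.} Next I would note that $t\mapsto\theta(t)$ is $\mu$-measurable with $\|\theta(t)\|_{X^\star}\le M$ on $[a,b]$ (it is a Bochner integral over $r$ of a continuous integrand, composed with the Borel map $t\mapsto(f^-(t),f^+(t))$), so $\langle\theta(\cdot),f'_\mu(\cdot)\rangle\in L^1_\mathrm{loc}(I,\mu)$ and $\langle\theta(\cdot),f'_\mu(\cdot)\rangle\,\mu$ is a locally finite signed measure. Since the half-open intervals form a $\pi$-system generating the Borel sets, two such measures agreeing on all $(a,b]$ coincide; this reduces the theorem to \eqref{eqn:chainRuleGoal}.

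\emph{Riemann sums and passage to the limit.} To prove \eqref{eqn:chainRuleGoal} I would fix a partition $a=s_0<\dots<s_m=b$, telescope $E(f^+(b))-E(f^+(a))=\sum_k\big(E(f^+(s_k))-E(f^+(s_{k-1}))\big)$, and rewrite each summand via the scalar fundamental theorem of calculus applied to $r\mapsto E\big((1-r)f^+(s_{k-1})+rf^+(s_k)\big)$. Using $f^+(s_k)-f^+(s_{k-1})=\mathrm df\big((s_{k-1},s_k]\big)=\int_{(s_{k-1},s_k]}f'_\mu\,\mathrm d\mu$, that a bounded linear functional commutes with the Bochner integral, and Fubini's theorem (legitimate because of the bound $M$ and $f'_\mu\in L^1(\mu)$), this yields
\begin{align}
\label{eqn:chainRuleStep}
E(f^+(b))-E(f^+(a))=\int_{(a,b]}\big\langle\Theta_P(t),f'_\mu(t)\big\rangle\,\mathrm d\mu(t),\qquad\Theta_P(t):=\int_0^1\mathrm dE\big((1-r)f^+(s_{k-1})+rf^+(s_k)\big)\dr
\end{align}
for $t\in(s_{k-1},s_k]$. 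Finally I would take partitions $P_N$ with mesh tending to $0$: for fixed $t\in(a,b]$ the interval $(c_N,d_N]$ of $P_N$ containing $t$ satisfies $c_N<t\to t$ and $t\le d_N\to t$, so $f^+(c_N)\to f^-(t)$ and $f^+(d_N)\to f^+(t)$ by the existence of the one-sided limits of $f$; since all arguments stay in $K$ and $\mathrm dE$ is continuous, dominated convergence in $r$ gives $\Theta_{P_N}(t)\to\theta(t)$ in $X^\star$ for every $t$, and the domination $|\langle\Theta_{P_N}(t),f'_\mu(t)\rangle|\le M\|f'_\mu(t)\|\in L^1(\mu)$ lets me pass to the limit in \eqref{eqn:chainRuleStep} to reach \eqref{eqn:chainRuleGoal}. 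Apart from the compactness of $K$ and the careful bookkeeping of $f^+$ versus $f^-$ at jump points, every remaining ingredient is elementary (the scalar $C^1$ fundamental theorem of calculus on segments, linearity of the pairing under Bochner integration, Fubini, and dominated convergence).
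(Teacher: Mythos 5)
The paper offers no proof of Theorem \ref{theorem:chainRule} for you to be compared against: the statement is imported from \cite{MV87}, and only Corollary \ref{cor:chainRule} is proved in the text. Judged on its own terms, your argument is essentially correct in structure: compactness of the filled-in trace via the $1$-Lipschitz arc-length reparametrisation, the exact partition identity obtained from the scalar fundamental theorem of calculus on chords together with $f^+(s_k)-f^+(s_{k-1})=\int_{(s_{k-1},s_k]}f'_\mu\,\mathrm d\mu$ and Fubini, the passage to the limit using the one-sided limits of $f$ and dominated convergence, and the reduction of the density statement to the Stieltjes identity on half-open intervals are all sound.

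One step does need repair, though the repair is standard. Your set $K$ is not convex, so $M:=\sup_{y\in K}\|\mathrm dE(y)\|_{X^\star}$ does not by itself make $E$ an $M$-Lipschitz function on $K$: the mean value inequality controls $|E(x)-E(y)|$ through $\mathrm dE$ on the chord joining $x$ and $y$, and the chords $[f(t_i),f(t_{i+1})]$ (in your variation estimate) and $[f^+(s_{k-1}),f^+(s_k)]$ (which define $\Theta_P$ and whose uniform bound by $M$ you also invoke) need not lie in $K$. Replace $K$ by its closed convex hull $\widehat K:=\ol{\mathrm{conv}}(K)$, which is again compact because $X$ is complete; then $\sup_{\widehat K}\|\mathrm dE\|<\infty$, every chord between points of $K$ lies in $\widehat K$, and all of your bounds and the uniform continuity used for $\Theta_{P_N}(t)\rightarrow\theta(t)$ go through verbatim. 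Two further cosmetic points: the image of the filled-in curve equals your $K$ only after choosing, say, the right-continuous representative of $f$ (at a jump the pointwise value $f(t)$ may be a third point off the segment joining $f^-(t)$ and $f^+(t)$), which is harmless since the essential variation and $\mathrm d(E\circ f)$ are unaffected by changing $f$ at countably many points; and the Stieltjes identity should be asserted for compact $[a,b]\subseteq I$ rather than on all of $\ol I$, which is all the local statement requires.
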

		\begin{corollary}
		\label{cor:chainRule}
			Suppose $f\in SBV(0,T;X)$ and $E:X\rightarrow\R$ is continuously
			Fr\'echet-differentiable.
			Then $E\circ f\in SBV(0,T)$ and for all $0\leq a\leq b\leq T$:
			$$
				\mathrm d (E\circ f)((a,b])=\int_a^b \langle\mathrm d E(f(s)),f'(s)\rangle\ds
				+\sum_{s\in J_f\cap(a,b]}\left(E(f(s^+))-E(f(s^-))\right).
			$$
		\end{corollary}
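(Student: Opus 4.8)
The plan is to derive the corollary from the general $BV$-chain rule, Theorem \ref{theorem:chainRule}, applied to a reference measure $\mu$ tailored to the $SBV$-structure of $f$, and then to read off the absolutely continuous and the atomic parts of $\mathrm d(E\circ f)$ separately.

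First I would fix the reference measure. Since $f\in SBV(0,T;X)\subseteq BV(0,T;X)$, the jump set $J_f$ is at most countable; enumerating $J_f=\{s_j\}_{j}$ I would set $\nu:=\sum_j 2^{-j}\delta_{s_j}$ (a finite positive Borel measure, the summable weights being needed in case $J_f$ accumulates) and $\mu:=\C L^1\lfloor(0,T)+\nu$, a positive Radon measure on $(0,T)$. Reading the $SBV$ decomposition $\mathrm df=f'\C L^1+(f^+-f^-)\C H^0\lfloor J_f$ against $\mu$ gives $\mathrm df=f'_\mu\mu$ with $f'_\mu=f'$ $\C L^1$-a.e.\ and $f'_\mu(s_j)=2^{j}\big(f^+(s_j)-f^-(s_j)\big)$, while $\|f'_\mu\|_{L^1(0,T,\mu;X)}=\|f'\|_{L^1(0,T;X)}+\sum_j\|f^+(s_j)-f^-(s_j)\|<\infty$ by finiteness of the variation of $f$, so $f'_\mu\in L^1(0,T,\mu;X)$. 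Hence Theorem \ref{theorem:chainRule} applies and yields $E\circ f\in BV_{\mathrm{loc}}\big((0,T);\R\big)$ with $\mathrm d(E\circ f)=\langle\theta(\cdot),f'_\mu(\cdot)\rangle\,\mu$, where $\theta(t)=\int_0^1\mathrm dE\big((1-r)f(t^-)+rf(t^+)\big)\dr$.

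Next I would evaluate $\mathrm d(E\circ f)((a,b])$ by splitting $\mu=\C L^1\lfloor(0,T)+\nu$. For $\C L^1$-a.e.\ $t$ one has $f(t^-)=f(t^+)=f(t)$, hence $\theta(t)=\mathrm dE(f(t))$ and $f'_\mu(t)=f'(t)$, so the $\C L^1$-part contributes $\int_a^b\langle\mathrm dE(f(s)),f'(s)\rangle\ds$. Each atom $s_j\in(a,b]$ contributes $\nu(\{s_j\})\langle\theta(s_j),f'_\mu(s_j)\rangle=\langle\theta(s_j),f^+(s_j)-f^-(s_j)\rangle$, and applying the one-variable fundamental theorem of calculus to the $\C C^1$ function $r\mapsto E\big((1-r)f^-(s_j)+rf^+(s_j)\big)$ identifies this with $E(f(s_j^+))-E(f(s_j^-))$. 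Summing over the atoms in $(a,b]$ yields the claimed identity.

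Finally I would verify that this is the genuine $SBV$-decomposition of $E\circ f$ on $(0,T)$. Because $f$ has bounded range on the bounded interval $(0,T)$ (a standard consequence of finite variation) and $E$ is continuously Fr\'echet-differentiable, $s\mapsto\mathrm dE(f(s))$ is bounded in $X^\star$, so $s\mapsto\langle\mathrm dE(f(s)),f'(s)\rangle$ belongs to $L^1(0,T)$ and $\sum_{s\in J_f}|E(f(s^+))-E(f(s^-))|\le C\sum_{s\in J_f}\|f^+(s)-f^-(s)\|<\infty$; hence $E\circ f\in BV(0,T)$ with absolutely continuous part $\langle\mathrm dE(f),f'\rangle\C L^1$ and a purely atomic singular part carried by the countable set $J_{E\circ f}\subseteq J_f$ (the inclusion holding since $E$ is continuous), i.e.\ $E\circ f\in SBV(0,T)$. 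I do not anticipate a genuine obstacle; the only points needing care are the choice of the reference measure $\mu$ when $J_f$ has accumulation points, handled by the summable weights above, and the bookkeeping that matches the atomic part of $\langle\theta,f'_\mu\rangle\mu$ with the jump sum via the fundamental theorem of calculus.
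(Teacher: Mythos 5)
Your proposal is correct and follows essentially the same route as the paper: apply Theorem \ref{theorem:chainRule} with a reference measure combining $\C L^1$ and an atomic measure on $J_f$, evaluate the $\C L^1$-part using $f(s^+)=f(s^-)=f(s)$ a.e., and convert the atomic contributions $\langle\theta(s),f^+(s)-f^-(s)\rangle$ into $E(f(s^+))-E(f(s^-))$ by the fundamental theorem of calculus. The only (cosmetic, if slightly more careful) deviation is your weighted atomic measure $\sum_j 2^{-j}\delta_{s_j}$ in place of the paper's $\mu=\C L^1+\C H^0\lfloor J_f$, which guards against accumulation of jump points.
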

		\begin{proof}
			We apply Theorem \ref{theorem:chainRule}.
			By assumption, we obtain the decomposition
			$\mathrm df=f'_\mu\mu$ with $\mu=\C L^1+\C H^0\lfloor J_f$ and
			$f'_\mu(t)=f'(t)+f(t^+)-f(t^-)$ for all $t\in(0,T)$.
			Applying Theorem \ref{theorem:chainRule} yields
			\begin{align*}
				\mathrm d (E\circ f)((a,b])&=\int_{(a,b]}\langle\theta(s),f'_\mu(s)\rangle\,\mathrm d\mu(s)\\
				&=\int_{(a,b]}\langle\theta(s),f'(s)\rangle\,\mathrm d\C L^1(s)
					+\sum_{t\in J_f\cap(a,b]}\langle\theta(s),f(s^+)-f(s^-)\rangle
			\end{align*}
			Since $f(s^+)=f(s^-)=f(s)$ for $\C L^1-$a.e. $s\in(a,b]$, the first term on the right hand side becomes
			\begin{align*}
				\int_{(a,b]}\langle\theta(s),f'(s)\rangle\,\mathrm d\C L^1(s)
					&=\int_{(a,b]}\Big\langle\int_0^1\mathrm d E((1-r)f(s^-)+rf(s^+))\dr,f'(s)\Big\rangle\,\mathrm d\C L^1(s)\\
					&=\int_{(a,b]}\langle\mathrm d E(f(s),f'(s)\rangle\ds,
			\end{align*}
			where $\mathrm ds:=\mathrm d\C L^1(s)$.
			Furthermore, by the classical chain rule,
			\begin{align*}
				\sum_{s\in J_f\cap(a,b]}\langle\theta(s),f(s^+)-f(s^-)\rangle
				&=\sum_{s\in J_f\cap(a,b]}\Big\langle\int_0^1\mathrm d E((1-r)f(s^-)+rf(s^+))\dr,f(s^+)-f(s^-)\Big\rangle\\
				&=\sum_{s\in J_f\cap(a,b]}\int_0^1\Big\langle\mathrm d E((1-r)f(s^-)+rf(s^+)),f(s^+)-f(s^-)\Big\rangle\dr\\
				&=\sum_{s\in J_f\cap(a,b]}\int_0^1\frac{\mathrm d}{\mathrm dr}E((1-r)f(s^-)+rf(s^+))\dr\\
				&=\sum_{s\in J_f\cap(a,b]}\Big(E(f(s^+))-E(f(s^-))\Big).
			\end{align*}
			\ep
		\end{proof}
		
	\section{Truncation property for Sobolev functions}
		\begin{lemma}
		\label{lemma:truncation}
			Let $D,\Omega\subseteq\R^n$ be open sets and $p>n$.
			Furthermore, assume that a function $f\in W^{1,p}(\Omega)$ fulfills $f=0$ on $\partial D\setminus\partial\Omega$
			($f$ is here considered as a continuous function due to the embedding $W^{1,p}(\Omega)\hookrightarrow\C C(\ol\Omega)$).
			Then $f\mathds 1_{D}\in W^{1,p}(\Omega)$.
		\end{lemma}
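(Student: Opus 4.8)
The plan is to show that $f\mathds 1_D$ has distributional gradient $\mathds 1_D\nabla f$ (extended by zero on $\Omega\setminus D$), which obviously lies in $L^p(\Omega)$. The only feature of the boundary hypothesis I would actually use is this: since $\Omega$ is open we have $\partial D\cap\Omega\subseteq\partial D\setminus\partial\Omega$, so the continuous representative of $f$ satisfies $f=0$ on $\partial D\cap\Omega$; here $p>n$ is essential to have that representative via $W^{1,p}(\Omega)\hookrightarrow\C C(\ol\Omega)$. I would also record the standard fact that $\nabla f=0$ holds $\C L^n$-a.e.\ on $\{f=0\}$, so that $\mathds 1_D\nabla f=\mathds 1_{\{f\neq 0\}}\mathds 1_D\nabla f$ a.e.; in particular the $\C L^n$-negligible ambiguity in the pointwise meaning of $\mathds 1_D$ on $\partial D$ is irrelevant.

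The heart of the argument is an approximation that pushes the values of $f$ uniformly away from $0$. For $\delta>0$ set $f_\delta:=\mathrm{sgn}(f)\,(|f|-\delta)^+$; then $f_\delta\in W^{1,p}(\Omega)$ with $\nabla f_\delta=\mathds 1_{\{|f|>\delta\}}\nabla f$, $|f_\delta|\le|f|$, $|\nabla f_\delta|\le|\nabla f|$, and crucially $f_\delta$ vanishes on the \emph{open} set $\{x\in\Omega:|f(x)|<\delta\}$, which contains $\partial D\cap\Omega$. Therefore $\Omega$ is covered by the three open sets $V_1:=D\cap\Omega$, $V_2:=\{x\in\Omega:|f(x)|<\delta\}$, $V_3:=\Omega\setminus\ol D$: a point $x\in\Omega$ with $|f(x)|\ge\delta$ has $f(x)\neq0$, hence $x\notin\partial D$, hence $x\in D$ or $x\in\R^n\setminus\ol D$. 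On $V_1$ one has $f_\delta\mathds 1_D=f_\delta\in W^{1,p}(V_1)$ with gradient $\nabla f_\delta=\mathds 1_D\nabla f_\delta$; on $V_2$ and on $V_3$ both $f_\delta\mathds 1_D$ and the candidate gradient $\mathds 1_D\nabla f_\delta$ vanish; and on the overlap $V_1\cap V_2$ these are consistent (the other overlaps are empty or trivial). Testing an arbitrary $\psi\in\C C_\mathrm{c}^\infty(\Omega)$ and decomposing it by a smooth partition of unity subordinate to $\{V_1,V_2,V_3\}$ on the compact set $\mathrm{supp}\,\psi$ then yields $\int_\Omega f_\delta\mathds 1_D\,\mathrm{div}\psi=-\int_\Omega\mathds 1_D\nabla f_\delta\cdot\psi$ for all such $\psi$, so $f_\delta\mathds 1_D\in W^{1,p}(\Omega)$.

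Finally I would pass to the limit $\delta\to0^+$: by dominated convergence $f_\delta\mathds 1_D\to f\mathds 1_D$ in $L^p(\Omega)$ (domination by $|f|$) and $\mathds 1_D\nabla f_\delta=\mathds 1_D\mathds 1_{\{|f|>\delta\}}\nabla f\to\mathds 1_D\nabla f$ in $L^p(\Omega)$ (domination by $|\nabla f|$, using once more that $\nabla f=0$ a.e.\ on $\{f=0\}$). Since $W^{1,p}(\Omega)$ is closed under $L^p$-convergence of functions together with $L^p$-convergence of their gradients, we conclude $f\mathds 1_D\in W^{1,p}(\Omega)$ with $\nabla(f\mathds 1_D)=\mathds 1_D\nabla f$. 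The main obstacle I anticipate is that $D\cap\Omega$ may be wildly irregular, so a direct integration by parts on it is not available; the whole point of introducing $f_\delta$ — and where the hypotheses $f=0$ on $\partial D\setminus\partial\Omega$ and $p>n$ enter — is to replace that forbidden integration by parts by a purely local computation on the cover $\{V_1,V_2,V_3\}$, in which no regularity of $\partial D$ is ever used.
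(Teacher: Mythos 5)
Your proof is correct, but it takes a genuinely different route from the paper. The paper reduces the statement to one dimension via the slicing characterization of $W^{1,p}$ (Proposition 3.105 in Ambrosio--Fusco--Pallara): it shows that each slice $g_x^\nu=f_x^\nu\mathds 1_{D_x^\nu}$ is absolutely continuous by replacing the endpoints of any finite family of intervals that cross $\partial D$ with crossing points where $f$ vanishes, so that the oscillation sums for $g_x^\nu$ are dominated by those for $f_x^\nu$, and then recovers $f\mathds 1_D\in W^{1,p}(\Omega)$ from the slicing criterion. You instead stay in $n$ dimensions: you truncate to $f_\delta=\mathrm{sgn}(f)(|f|-\delta)^+$, observe that the hypothesis $f=0$ on $\partial D\cap\Omega$ makes $\{V_1,V_2,V_3\}=\{D\cap\Omega,\{|f|<\delta\},\Omega\setminus\ol D\}$ an open cover of $\Omega$ on which the pair $(f_\delta\mathds 1_D,\mathds 1_D\nabla f_\delta)$ satisfies the weak-derivative identity locally, glue by a partition of unity, and pass to the limit $\delta\to0^+$ using $\nabla f=0$ a.e.\ on $\{f=0\}$ and the closedness of the weak gradient under $L^p$ convergence. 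Both arguments use $p>n$ only to have a continuous representative and use no regularity of $\partial D$ whatsoever; yours has the advantage of relying only on elementary Sobolev facts (Lipschitz chain rule, locality of distributional derivatives, dominated convergence) and of exhibiting the gradient formula $\nabla(f\mathds 1_D)=\mathds 1_D\nabla f$ explicitly, while the paper's slicing argument avoids the truncation/limit step and works directly with the function $f\mathds 1_D$ itself, at the price of invoking the heavier slicing machinery for Sobolev (and BV) functions.
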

		\begin{proof}
			We can reduce the problem to one space dimension by using the following slicing result from \cite[Proposition 3.105]{Ambrosio00}
			for functions $u\in L^p(\Omega)$:
			\begin{align}
				u\in W^{1,p}(\Omega)\quad\Longleftrightarrow\quad \forall \nu\in\mathbb S^{n-1}:\;&u_x^\nu\in W^{1,p}(\Omega_x^\nu)\text{ for $\C L^{n-1}$-a.e. }
					x\in\Omega_\nu\notag\\
					&\text{ and }\int_{\Omega_\nu}\int_{\Omega_x^\nu}|\nabla u_x^\nu|^p\dt\dy<\infty,
			\label{eqn:slicingProperty}
			\end{align}
			where $\Omega_\nu$ is the orthogonal projection of $\Omega$ to the hyperplane orthogonal to $\nu$ and
			$\Omega_x^\nu:=\{t\in\R\,|\,x+t\nu\in\Omega\}$ as well as $u_x^\nu(t):=u(x+t\nu)$.
			
			Applying this result to $f$, we obtain $f_x^\nu\in W^{1,p}(\Omega_x^\nu)$ for $\C L^{n-1}$-a.e. $x\in\Omega_\nu$ and all $\nu\in\mathbb S^{n-1}$.
			Moreover, slices for the function $g:=f\mathds 1_D$ are given by the equation
			$$
				g_x^\nu=f_x^\nu\mathds 1_{D_x^\nu}.
			$$
			The function $f_x^\nu$ is absolutely continuous. We claim that this is also the case for $g_x^\nu$.
			To proceed, let $\varepsilon>0$ be an arbitrary real.
			Then, we get some constant $\delta>0$ such that
			\begin{align}
				&(a_k,b_k)\text{, $k\in I$, with $a_k\leq b_k$ are finitely many disjoint intervals of }\Omega_x^\nu\text{ with }\sum_{k\in I}|a_k-b_k|<\delta\notag\\
				&\qquad\Longrightarrow\quad\sum_{k\in I}|f_x^\nu(a_k)-f_x^\nu(b_k)|<\varepsilon.
			\label{eqn:absoluteContinuity}
			\end{align}
			The property \eqref{eqn:absoluteContinuity} is also satisfied for $g_x^\nu$.
			Indeed, let $(a_k,b_k)$, $k\in I$, with $a_k\leq b_k$ be finitely many disjoint intervals of $\Omega_x^\nu$ with $\sum_{k\in I}|a_k-b_k|<\delta$.
			We define the values $\widetilde a_k$ and $\widetilde b_k$ in the following way:
			$$
				(\widetilde a_k, \widetilde b_k):=
				\begin{cases}
					(a_k,b_k)&\text{if } a_k,b_k\in D_x^\nu\text{ or }a_k,b_k\not\in D_x^\nu,\\
					(z,b_k)\text{ for an arbitrary fixed }z\in [a_k,b_k]\cap \partial D_x^\nu&\text{if }a_k\not\in D_x^\nu\text{ and } b_k\in D_x^\nu,\\
					(a_k,z)\text{ for an arbitrary fixed }z\in [a_k,b_k]\cap \partial D_x^\nu&\text{if }a_k\in D_x^\nu\text{ and } b_k\not\in D_x^\nu.
				\end{cases}
			$$
			We conclude $\sum_{k\in I} |\widetilde a_k-\widetilde b_k|\leq \sum_{k\in I} |a_k-b_k|\leq\delta$ and therefore
			$\sum_{k\in I}|f_x^\nu(\widetilde a_k)-f_x^\nu(\widetilde b_k)|<\varepsilon$ by \eqref{eqn:absoluteContinuity}.
			Taking
			$$
				\sum_{k\in I}|g_x^\nu(a_k)-g_x^\nu(b_k)|
					=\sum_{k\in I}|g_x^\nu(\widetilde a_k)-g_x^\nu(\widetilde b_k)|\leq \sum_{k\in I}|f_x^\nu(\widetilde a_k)-f_x^\nu(\widetilde b_k)|
			$$
			into account, shows that $g_x^\nu$ is absolutely continuous and we find $g_x^\nu\in W^{1,p}(\Omega_x^\nu)$.
			
			Moreover, $\int_{\Omega_\nu}\int_{\Omega_x^\nu}|\nabla g_x^\nu|^p\dt\dy=\int_{D_\nu}\int_{D_x^\nu}|\nabla f_x^\nu|^p\dt\dy<\infty$.
			Applying \eqref{eqn:slicingProperty} yields $g\in W^{1,p}(\Omega)$.
			\ep
		\end{proof}
	\end{appendix}

\addcontentsline{toc}{chapter}{Bibliography}{\footnotesize{\setlength{\baselineskip}{0.2 \baselineskip}
\bibliography{references}}
\bibliographystyle{alpha}}
\end{document}